\theoremstyle{plain} 
\newtheorem{theorem}{Theorem}[section]
\newtheorem*{theorem*}{Theorem}
\newtheorem{lemma}[theorem]{Lemma}
\newtheorem{proposition}[theorem]{Proposition}
\newtheorem{corollary}[theorem]{Corollary} 
\newtheorem{definition}[theorem]{Definition} 
\theoremstyle{remark}
\newtheorem{remark}[theorem]{Remark}
\newtheorem*{lem*}{Lemma}
\newtheorem*{sublem*}{Sublemma}
\newtheorem*{remark*}{Remark}
\newtheorem*{NB*}{NB}
\newcommand{\R}{  \mathbb{R}   }
\newcommand{\C}{  \mathbb{C}   }
\newcommand{\Z}{  \mathbb{Z}   }
\newcommand{\N}{  \mathbb{N}   }
\newcommand{\T}{  \mathbb{T}   }
\newcommand{\A}{  \mathbb{A}   }
\newcommand{\cA}{  \mathcal{A}   }
\newcommand{\cB}{  \mathcal{B}   }
\newcommand{\cC}{  \mathcal{C}   }
\newcommand{\Ca}{  \mathcal{C}   }
\newcommand{\D}{  \mathcal{D}   }
\newcommand{\E}{  \mathcal{E}   }
\newcommand{\F}{  \mathcal{F}   }
\renewcommand{\L}{  \mathcal{L}   }
\newcommand{\cM}{  \mathcal{M}   }
\newcommand{\NF}{  \mathcal{NF}   }
\renewcommand{\O}{  \mathcal{O}   }
\renewcommand{\P}{  \mathcal{P}   }
\newcommand{\Tc}{  \mathcal{T}   }
\newcommand{\cT}{  \mathcal{T}   }
\newcommand{\om}{  \omega   }
\newcommand{\Om}{  \Omega   }
\newcommand{\ga}{\gamma   }
\newcommand{\s}{  \sigma   }
\newcommand{\De}{  (-\Delta) }
\newcommand{\ka}{  \kappa   }
\renewcommand{\r}{  \rho   }
\renewcommand{\phi}{  \varphi  }
\newcommand{\eps}{\varepsilon}
\newcommand{\vark}{  \varkappa   }
\newcommand{\de}{  \delta   }
\newcommand{\LL}{  \Lambda^{\#}}
\newcommand{\la}{  \lambda_a   }
\newcommand{\lb}{  \lambda_b   }
\newcommand{\diag}{\operatorname{diag}}
\newcommand{\Leb}{\operatorname{Leb}}
\newcommand{\Mat}{\operatorname{Mat}}
\newcommand{\meas}{\operatorname{meas}}
\newcommand{\dist}{\underline{\operatorname{dist}}}
\newcommand{\cAd}{\operatorname{ad}}
\newcommand{\diam}{\operatorname{diam}}
\newcommand{\Id}{\operatorname{Id}}
\newcommand{\cte}{ {\operatorname{ct.}  } }
\newcommand{\Cte}{ {\operatorname{Ct.}  } }
\newcommand{\Haus}{ {\operatorname{Hausdorff}  } }
\newcommand{\lsim}{  \lesssim   }
\newcommand{\gsim}{  \gtrsim   }
\def\ab#1{\left|#1\right|}
\def\aa#1{\left\Vert#1\right\Vert}
\newcommand{\be}{\begin{equation}}
\newcommand{\ee}{\end{equation}}
\newcommand{\ben}{\begin{equation*}}
\newcommand{\een}{\end{equation*}}
\newcommand{\ban}{\begin{align*}}
\newcommand{\ean}{\end{align*}}
\numberwithin{equation}{section}
\newcommand{\dd}{  \text{d}   }
\newcommand{\p}{ \partial}
\author{L. Hakan Eliasson}
\address{Univ. Paris Diderot, Sorbonne Paris Cit\'e\\
Institut de Math\'emathiques de Jussieu-Paris rive gauche, UMR 7586\\
CNRS\\
Sorbonne Universit\'es, UPMC Univ. Paris 06\\
F-75013, Paris, France} 
\email{hakan.eliasson@imj-prg.fr}
 \author{ Beno\^it Gr\'ebert}
\address{Laboratoire de Math\'ematiques Jean Leray, Universit\'e de Nantes, UMR CNRS 6629\\
2, rue de la Houssini\`ere \\
44322 Nantes Cedex 03, France}
\email{benoit.grebert@univ-nantes.fr}
\author{ Serge\"i B. Kuksin }
\address{CNRS\\
Institut de Math\'emathiques de Jussieu-Paris rive gauche, UMR 7586\\
Univ. Paris Diderot, Sorbonne Paris Cit\'e\\
Sorbonne Universit\'es, UPMC Univ. Paris 06\\
F-75013, Paris, France}
\email{sergei.kuksin@imj-prg.fr}
\title[KAM for the non-linear Beam equation 2]
{KAM for the non-linear Beam equation 2:\\
A normal form theorem}
\begin{document}

\begin{abstract}
We prove an abstract KAM theorem adapted to 
space-multidimensional hamiltonian 
 PDEs with regularizing nonlinearities. It applies
 in particular to the singular perturbation problem
 studied in the first part of this work.
 
  \begin{center} {\bf \large 8/2/ 2015}\end{center}
 
 \end{abstract}

\subjclass{ }
\keywords{ KAM theory, Hamiltonian systems, multidimensional PDEs.}
\thanks{
}

\maketitle

\tableofcontents

\section{Introduction}

\subsubsection{The phase space}\label{ssThePhaseSpace}

Let $\cA$ and $\F$ be two finite sets in $\Z^{d_*}$ and let $\L_\infty$ be an infinite subset of $\Z^{d_*}$. 
Let $\L$ be the disjoint union $\cA\sqcup \F\sqcup \L_{\infty}$ and consider $(\C^2)^{\L}$.

 For any subset $X$ of $\L$, consider the projection
$$\pi_X:(\C^2)^{\L}\to (\C^2)^{X}=\{\zeta\in (\C^2)^{\L}: \zeta_a=0\ \forall a\notin X\}.$$
We can thus write $(\C^2)^{\L}=(\C^2)^{X}\times (\C^2)^{\L\setminus X}$,
$\zeta=(\zeta_X,\zeta_{\L\setminus X})$,
and when $X$ is finite this gives an injection
$$\iota_X:(\C^2)^{\#X}\hookrightarrow (\C^2)^{\L}$$
whose image is $ (\C^2)^{X}$.

\medskip

Let $\ga=(\ga_1,\ga_2)\in\R^2$ and  let
$Y_\ga$  be the space of sequences $\zeta\in (\C^2)^{\L}  $ such that
$$
||\zeta||_{\ga}=  \sqrt{\sum_{a\in\L} |\zeta_a|^2e^{2\ga_1|a|}\langle a\rangle^{2 \ga_2}}<\infty$$
-- here $\langle a\rangle= \max (|a|,1)$ and $|\cdot |$ is the standard Hermitian norm on $\C^n$ associated with 
the standard scalar product $\langle \cdot,\cdot \rangle_{\C^n}$.

\medskip

Write $\zeta_a=(p_a,q_a)$ and let
$$\Om(\zeta,\zeta')= \sum_{a\in\L} p_a q'_a-q_a p'_a.$$
$\Om$ is  an anti-symmetric bi-linear form which is  continuous on
$$Y_\ga\times Y_{-\ga}\cup Y_{-\ga}\times Y_{\ga}\to \C$$ 
with norm $\aa{\Om}= 1$.  The subspaces $(\C^2)^{\{a\}}$  are symplectic subspaces of two (complex) dimensions carrying the canonical symplectic structure.

$\Om$  defines as usual (by contraction on the first factor) a bounded bijective operator
$$Y_\ga\ni \zeta\mapsto \Om( \zeta,\cdot) \in Y^*_{-\ga}.
\footnote{\ $Y^*_{\ga}$ denote the Banach space dual of $Y_{\ga}$}
$$
We shall denote its inverse by
$$J:   Y^*_{-\ga}\to Y_\ga.$$ 

\begin{NB*} 
There is another common way to identify $ Y^*_{-\ga}$ with $Y_\ga$, the $L^2$-pairing. This pairing defines
an isomorphism $\nabla:   Y^*_{-\ga}\to Y_\ga$ such that
$$J\circ\nabla^{-1}\zeta=\{ \left(\begin{array}{cc} 0& -1\\ 1& 0\end{array}\right)\zeta_a: a\in\L\}.$$
The operator $J\circ\nabla^{-1}$ is a complex structure compatible with $\Om$ which is customarily denoted by $J$, and we shall follow this tradition. This abuse of notation
will cause no confusion since the two $J$'s  act on different objects: one acts on one-forms and the other on vectors, and which is the case will be clear from the context.

\end{NB*}

A bounded map $A:Y_\ga\to Y_\ga$, $\ga\ge(0,0)$,
\footnote{\ $(\ga_1',\ga_2')\le (\ga_1,\ga_2)$ if, and only if $\ga_1'\le\ga_1$ and $\ga_1'\le \ga_2'$}
is {\it symplectic} if, and only if,
it extends to a bounded map $A:Y_{-\ga}\to Y_{-\ga}$
and verifies
$$\Om(A\zeta,A\zeta')=\Om(\zeta,\zeta'),\quad \zeta\in Y_{\ga},\zeta'\in Y_{-\ga},$$
or, equivalently, $A^*\circ J^{-1}\circ A=J^{-1}$ on  $Y_{\ga}$ and on $Y_{-\ga}$. If
$A$ is bijective, then it is symplectic if, and only if, 
$A^*\circ J^{-1}\circ A=J^{-1}$ on  $Y_{\ga}$ (see \cite{K00}).

\medskip
Let 
$$\A^{\cA} =\C^{\cA}\times(\C/2\pi \Z)^{\cA}$$
and consider the Banach manifold
$\A^{\cA} \times \pi_{\L\setminus \cA} Y_\ga$ whose elements are denoted $x=(r,\theta= [z],w)$. 
\footnote{\ $[z]$ being the class of $z\in \C^{\cA}$}

We provide 
this manifold with the metric 
$$\aa{x-x'}_\ga=
\inf_{p\in\Z^{d_*}}  ||(r, z+2\pi p, w)-(r', z',w')||_\ga.$$

We provide $\A^{\cA} \times \pi_{\L\setminus \cA} Y_\ga$ with the  symplectic structure $\Om$. To any 
$C^{1}$-function  $f(r,\theta,w)$ on (some open set in)  $\A^{\cA}\times \pi_{\L\setminus \cA} Y_{\ga}$ it associates
a  vector field $X_f=-J(df)$  --  the Hamiltonian vector field of $f$
\footnote{\ there is no agreement as to the sign of the Hamiltonian vectorfield -  we've used the choice of Arnold 
\cite{Arn}}
-- which in the coordinates $(r,\theta,w)$ takes the form
$$
\left(\begin{array}{c} \dot r_a \\ \dot \theta_a\end{array}\right)=J
\left(\begin{array}{c} \frac{\p}{\p r_a} f(r,\theta,w)\\
\frac{\p}{\p \theta_a} f(r,\theta,w)\end{array}\right)
\qquad
\left(\begin{array}{c} \dot p_a \\ \dot q_a\end{array}\right)=J
\left(\begin{array}{c} \frac{\p}{\p p_a} f(r,\theta,w)\\
 \frac{\p}{\p q_a} f(r,\theta,w)\end{array}\right).
$$

\subsubsection{An integrable Hamiltonian system in $\infty$ many dimensions}

In this paper we are considering an infinite dimensional Hamiltonian system given by a function
$h(r,w,\r)$ of the form
\be\label{equation1.1}  \langle r,\om(\r)\rangle +\frac12\langle w,A(\r)w \rangle=
\langle r,\om(\r)\rangle +\frac12\langle w_{\F},H(\r)w_{ \F} \rangle
+\frac12\sum_{a\in \L_{\infty}} \la(p_a^2+q_a^2),\ee
where $w_a=(p_a,q_a)$ and
\be\label{properties}\left\{\begin{array}{ll}
\om:\D\to\R^{\cA}&\\
\la:\D\to \R,&\quad  a\in \L_\infty\\
H:\D\to gl(\R^{\F}\times \R^{\F}),&\quad {}^t\! H=H
\end{array}\right.\ee
are $\cC^{{s_*}}$, ${s_*}\ge 1$,   functions of $\r\in \D$, the unit ball in $\R^{\P}$, parametrized by  some finite subset $\P$
of $\Z^{d_*}$. 

The Hamiltonian vector field of  $h$ is not $\cC^{1}$ on $\A^{\cA}\times  \pi_{\L\setminus \cA} Y_{\ga}$, but its Hamiltonian system still has a well defined flow with a {\it finite-dimensional invariant torus}
$$\{0\}\times\T^{\cA}\times\{0,0\}$$
which is {\it reducible}, i.e. the linearized equation on this torus (is conjugated to a system that) does not depend on 
the angles $\theta$. This linearized equation
has infinitely many elliptic directions with purely imaginary eigenvalues
$$\{{\mathbf i}\la(\r) : a\in \L_\infty\}$$
and finitely many other directions given by the system
$$
\dot \zeta_\F= JH(\r)\zeta_\F.$$

\subsubsection{A perturbation problem}

The question here is if this invariant torus for $h$ persists under perturbations $h+f$, and, if so, if the persisted torus
is reducible.

In finite dimension the answer is yes under very general conditions  --  for the first proof in the purely elliptic case see \cite{E88}, and for a more general case see \cite{Y99}. These statements say that, under general conditions, the invariant torus persists and remains reducible under sufficiently small perturbations for a subset of parameters $\r$ of large Lebesgue measure. Since the unperturbed problem is linear, parameter selection can not be avoided here.

In infinite dimension the situation is more delicate, and results can only be proven under quite severe restrictions
on the normal frequencies (i.e. the eigenvalues ${\mathbf i}\la$). Such restrictions
are fulfilled for many PDE's in one space dimension --  the first such result was obtained in  \cite{K87}. 

For PDE's in higher space dimension the behavior of the normal frequencies is much more complicated and the results
are more sparse. A result for the Beam equation (which is simpler model than the Schr\"odinger equation and the Wave equation, and frequently considered in works on nonlinear PDE's)
was first obtained in \cite{GY06a} and  \cite{GY06b}. For other results on PDE's in higher space dimension see the discussion in the first part of this work \cite{EGK}.

\subsubsection{Conditions on the unperturbed Hamiltonian}

The function $h$ we shall consider will verify several assumptions.
\begin{itemize}
\item[A1] {\it -- spectral asymptotics.} There exist  constants  
$0< c',c\le 1$  and exponents $\beta_1= 2$, $\beta_2\ge 0,  \beta_3>0$  such that for all $\r\in\D$:

\be\label{la-lb-ter} 
|\la(\r)-\ab{a}^{\beta_1} |\leq c \frac1{\langle a\rangle^{\beta_2}}\quad a\in \L_{\infty};
\ee
\begin{multline}\label{la-lb}
|(\la(\r )-\lb(\r))-(\ab{a}^{\beta_1}-\ab{b}^{\beta_1}) |\leq \\
\le c'c\max( \frac1{\langle a\rangle^{\beta_3}},\frac1{\langle b\rangle^{\beta_3}}),
\quad a,b\in \L_{\infty}\,;
\end{multline}
\be\label{laequiv}
\left\{\begin{array}{l}
 \la(\r )\geq  c' \ \quad a\in\L_\infty\\
||(JH(\rho))^{-1}||\leq \frac1{c'};
\end{array}\right.
\ee
\be\label{la-lb-bis}
\left\{\begin{array}{ll}
|(\la(\r )-\lb(\r))) |\ge c' & a,b\in\L_\infty,\ \ab{a}\not=\ab{b}\\
||(\la(\r)I-{\mathbf i}JH(\rho))^{-1}||\leq \frac1{c'} & a\in \L_{\infty}.
\end{array}\right.
\ee

\item[A2] {\it  -- transversality.} We refer to section \ref{ssUnperturbed} for the precise formulation of this
condition which describes how the eigenvalues vary with the parameter $\r$.

\end{itemize}

\subsubsection{Conditions on the perturbation}
For $\sigma>0$ we let $\O_\ga(\s,\mu)$ be the set
$$\{x=(r ,\theta,w)\in \A^{\cA} \times\pi_{\L\setminus \cA}  Y_{\ga}: 
\aa{(\frac r\mu,{\mathbf  i}\frac{\Im\theta}\s,\frac w\mu)-0}_\ga<1\}.$$
It is often useful to scale the action variables by $\mu^2$ and not by $\mu$, but in our case  $\mu$ will
be $\approx 1$, and then there is no difference.

We shall consider perturbations 
$$f:\O_{\ga_*}(\s, \mu)\to \C,\quad \ga_*=(0,m_*)\ge(0,0),$$
that are  real holomorphic up to the boundary (rhb).
This means that it gives real values to real arguments and extends holomorphically
to a neighborhood of the closure of  $\O_{\ga_*}(\s, \mu)$. 
$f$ is clearly also rhb on $\O_{\ga'}(\s, \mu)$ for any $\ga'\ge\ga_*$, and
$$Jd f:\O_{\ga'}(\s, \mu)\to Y_{-\ga'} $$
is rhb. But we shall require more:

\begin{itemize}

\item[R1] {\it  -- first differential} 
$$Jd f:\O_{\ga'}(\s, \mu)\to Y_{\ga'} $$
is rhb for any $\ga_*\le \ga'\le\ga$.

\end{itemize}
This is a natural smoothness condition on the space of holomorphic functions on $\O_{\ga_*}(\s, \mu)$, and it  implies, in particular, that   $Jd^2 f(x)\in\cB(Y_{\ga};Y_{\ga})$ for any $x\in \O_{\ga}(\s, \mu)$.  

That $Jd^2 f(x)\in\cB(Y_{\ga};Y_{\ga})$ implies in turn that
$$
\ab{Jd^2f(x)[e_a,e_b]}\le\Cte e^{-\ga_1\ab{ \ab{a}-\ab{b}}}\min(\frac{\langle a\rangle}{\langle b\rangle},
\frac{\langle b\rangle}{\langle a\rangle})^{\ga_2}$$
for any two  unit vectors $e_a\in(\C^2)^{\{a\}}$ and $e_b\in(\C^2)^{\{b\}}$. 
But many Hamiltonian PDE's verify other, and  stronger, 
decay conditions  in terms of
$$\min(\ab{a-b},\ab{a+b}).$$
Such decay conditions do not seem to be  naturally related to any smoothness condition of $f$, but they may be instrumental in the KAM-theory for multidimensional PDE's:  see for example \cite{EK10} where such conditions were used to build a KAM-theory for some multidimensional non-linear Schr\"odinger equations.

The  decay condition needed in this work depends on a parameter $0\le\vark \le m_*$  and defines
a Banach sub-algebra $ \cM_{\ga,\vark}^b$ of $\cB(Y_{\ga};Y_{\ga})$, with norm 
$\aa{\cdot}_{\ga,\vark}$  --   its precise definition will be given in section \ref{ssMatrixAlgebra}. We shall require:

\begin{itemize}

\item[R2] {\it  -- second differential}
$$Jd^2f:\O_{\ga'}(\s, \mu)\to  \cM_{\ga',\vark}^b$$
is rhb for any $\ga_*\le \ga'\le\ga$s.

\end{itemize}

Denote by $\cT_{\ga,\vark}(\s,\mu) $ the space of functions
$$f:\O_{\ga_*}(\s, \mu)\to \C,$$
real holomorphic  up to the boundary, verifying  R1 and R2.
We provide  $\cT_{\ga,\vark} (\s, \mu)$ with the norm 
\be\label{norm}
|f|_{\begin{subarray}{c}\s,\mu\\ \ga, \vark  \end{subarray}}=
\max\left\{\begin{array}{l}
\sup_{x\in \O_{\ga_*}(\s, \mu)}|f(x)|\\ 
\sup_{\ga_*\le \ga'\le\ga}\sup_{x\in \O_{\ga'}(\s, \mu)} || Jd f(x)||_{\ga'}\\
\sup_{\ga_*\le  \ga'\le\ga}\sup_{x\in \O_{\ga'}(\s, \mu)}||Jd^2f(x)||_{\ga',\vark}
\end{array}\right.
\ee
making it  into a Banach space.  Notice that the first two ``components'' of this norm are related to the smoothness
of $f$, while the third ``component'' imposes a further decay condition on $Jd^2f(x)$.

\subsubsection{The normal form theorem}
For any $a\in\L_\infty$, let
$$
[a]=\{b\in\L_\infty: |b|=|a|\}.$$

\begin{theorem*}
Let  $h$ be a Hamiltonian defined by \eqref{equation1.1} and verifying Assumptions A1-2.

Let $f:\O_{\ga_*}(\s,\mu)\to \C$ be real holomorphic and verifying Assumptions R1-2 with
$$
\ga=(\ga_1,m_*)>\ga_*=(0,m_*)\quad\textrm{and}\quad 0<\vark\le m_*.$$
 
If $\eps=|f|_{\begin{subarray}{c}\s,\mu\\ \ga, \vark \end{subarray}}$ is sufficiently small, then there is a  set 
$\D'\subset \D$ with 
$$\Leb(\D\setminus \D')\to 0,\quad \eps\to 0$$
and a $\cC^{{s_*}}$ mapping
$$\Phi:\O_{\ga_*}(\s/2,\mu/2)\times\D\to \O_{\ga_*}(\s,\mu),$$
real holomorphic and symplectic for each parameter $\r\in\D$,
such that 
$$
(h+ f)\circ \Phi= h'+f'\in \cT_{\ga_*,\vark} ,$$
and
\begin{itemize}
\item[(i)] for $\r\in\D'$ and $\zeta=r=0$
$$d_r f'=d_\theta f'=
d_{\zeta} f'=d^2_{\zeta} f'=0;$$
\item[(ii)] $h':\O_{\ga_*}(\s/2,\mu/2)\times\D\to\C$ is a $\cC^{{s_*}}$-function, real holomorphic 
for each parameter $\r\in\D$, and
$$[\p_\r^j(h'(\cdot,\r)-h(\cdot,\r))]_{\begin{subarray}{c}\s,\mu\\ \ga_* , \vark  \end{subarray}}< C\eps, \quad |j|\le {s_*}-1;$$
\item[(iii)] $h'$ has the form
\begin{multline*}
 \langle r,\om'(\r)\rangle +\frac12\langle \zeta_{\F},H'(\r)\zeta_{ \F} \rangle+\\
+\frac12\sum_{[a]} \big(\langle p_{[a]},A'_{[a]}(\r)p_{[ a]} \rangle+ \langle q_{[a]},A'_{[a]}(\r)q_{[ a]} \rangle\big)
\end{multline*}
where the matrix $A'_{[a]}$ is symmetric;

\item[(iv)] for any $x\in \O_{\ga_*}(\s/2,\mu/2)$
$$||\p_\r^j(\Phi(x,\r)-x  )||_{\ga_*}\le C\eps,\quad |j|\le {s_*}-1.$$
\end{itemize}

The constant $C$ depends on $\#\cA,\#\F, \#\P,d_*, {{s_*}},m_*,\vark$ and $h$, 
but not on $\eps$.
\end{theorem*}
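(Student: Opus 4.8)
The plan is to follow the classical KAM scheme: construct the transformation $\Phi$ as an infinite composition of symplectic maps $\Phi = \lim_{n\to\infty}\Phi_0\circ\Phi_1\circ\cdots$, where each $\Phi_n = \exp X_{S_n}$ is the time-one flow of a Hamiltonian vector field generated by a solution $S_n$ of a homological equation, at each step killing the ``bad'' part of the current perturbation — namely the terms in $f$ of degree $\le 2$ in $(r^{1/2},\zeta)$ that do not already belong to the normal form — while keeping the higher-order and off-diagonal pieces under control by making them quadratically smaller. The key preparatory step is the \emph{KAM step lemma}: given $h^{(n)} = h_0 + (\text{normal form corrections})$ and a perturbation $f^{(n)}$ with $|f^{(n)}| \le \eps_n$, on a shrunk domain $\O_{\ga_*}(\s_n,\mu_n)$ with parameters in $\D_n$, one finds $S_n\in\cT_{\ga,\vark}$, a shrunk parameter set $\D_{n+1}$, and shrunk domain parameters $\s_{n+1}<\s_n$, $\ga^{(n+1)}_1<\ga^{(n)}_1$ such that $(h^{(n)}+f^{(n)})\circ\exp X_{S_n} = h^{(n+1)} + f^{(n+1)}$ with $|f^{(n+1)}| \le C\eps_n^{\,3/2}$ (the standard super-exponential gain), and such that $h^{(n+1)}$ still has the block form in (iii) with the normal frequencies, the $\F$-block $H'$, and the blocks $A'_{[a]}$ indexed by the spheres $[a]$, all $\cC^{s_*}$ in $\r$ and close to their predecessors.

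First I would set up the \emph{homological equation}: writing $f^{(n)} = f_0 + f_1 + f_2 + \text{h.o.t.}$ according to the degree in $(r,\zeta)$ (with $\theta$-dependence via Fourier series in $\theta$), one must solve $\{h^{(n)},S_n\} + f^{(n)}_{\le 2} = [\,f^{(n)}_{\le 2}\,]$, where the bracket on the right is the block-diagonal (in the $[a]$-decomposition) time-average, which gets absorbed into $h^{(n+1)}$. In Fourier-in-$\theta$ components this reduces to a family of linear equations whose invertibility is governed by the small-divisor estimates: for the $\theta$-translation part one needs $|\langle k,\om(\r)\rangle|$ bounded below; for the coupling to a single normal mode $|\langle k,\om\rangle + \la|$; for couplings to the $\F$-block and to pairs of normal modes $|\langle k,\om\rangle \pm \la \pm \lb|$ and analogous expressions involving the eigenvalues of $JH$. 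Here Assumption A1 gives the \emph{size} of these divisors (the spectral asymptotics \eqref{la-lb-ter}--\eqref{la-lb-bis} ensure that away from a measure-small set the resonances only involve finitely many $a$ per sphere, so the off-diagonal corrections decay and stay in the algebra $\cM^b_{\ga,\vark}$), while Assumption A2 (transversality) is what allows us, via a Bezout/resultant-type argument, to \emph{excise} a set of $\r$ of small measure on which any given small divisor is too small, with good control on how the total excised measure sums. The regularity R1--R2 of $f$, together with the algebra property of $\cM^b_{\ga,\vark}$ and the fact that $h^{(n)}$ acts boundedly on each $Y_{\ga'}$ (because $(JH)^{-1}$ is bounded and $\la\asymp|a|^2$), guarantees that $S_n$ again lies in $\cT_{\ga,\vark}(\s',\mu')$ with $|S_n| \lesssim \eps_n / (\text{divisor})^{\text{const}}$ on the cut domain, at the usual cost of a geometric loss in $\s$ and $\ga_1$.

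Next I would establish \emph{convergence}: choose $\eps_{n+1} = C\eps_n^{3/2}$, $\s_n = \s\prod_{j<n}(1-2^{-j-2})$ (so $\s_n \ge \s/2$), $\ga^{(n)}_1 \downarrow \ga^{(n)}_1/2$-type schedule, $\mu_n\downarrow\mu/2$, and $\D_{n+1} = \D_n \setminus (\text{excised set of measure} \le C\eps_n^{\text{const}})$; then $\eps_n \to 0$ super-exponentially provided $\eps_0 = \eps$ is small enough, $\bigcap_n\D_n =: \D'$ has $\Leb(\D\setminus\D')\to 0$ as $\eps\to0$, and the composed maps $\Phi_0\circ\cdots\circ\Phi_{n-1}$ converge in the $\cC^{s_*}$-in-$\r$, real-holomorphic-in-$x$ topology on $\O_{\ga_*}(\s/2,\mu/2)$ to the desired $\Phi$, because $\|\exp X_{S_n} - \Id\|_{\ga_*} \lesssim |S_n| \lesssim \eps_n^{1-\delta}$ is summable (and likewise its $\r$-derivatives up to order $s_*-1$, using that the $S_n$ are $\cC^{s_*}$ in $\r$ — this is why A1--A2 are assumed with $s_*$-regularity). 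Passing to the limit, the $\le 2$-jet of $f'$ at $\zeta=r=0$ vanishes for $\r\in\D'$, giving (i); the accumulated corrections to $h$ are bounded by $\sum_n\eps_n \lesssim \eps$, giving (ii) and the form (iii) with symmetric $A'_{[a]}$ (symmetry is preserved because the averaged quadratic form is symmetric at every step); and the telescoping bound on $\Phi - \Id$ gives (iv).

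The main obstacle is the simultaneous control of the small divisors involving \emph{pairs} of normal frequencies $\la\pm\lb$ together with the requirement that the resulting off-diagonal corrections to the normal form remain in the Banach algebra $\cM^b_{\ga,\vark}$ — i.e. the second-order Melnikov conditions in infinite dimension. In one space dimension the clustering $[a]=\{a\}$ is trivial, but here the spheres $[a]$ may be large and the eigenvalues $\la,\lb$ with $|a|=|b|$ coincide to leading order, so one cannot separate them by the divisor; the resolution — already visible in the statement — is to \emph{not} diagonalize within a sphere but to keep the block $A'_{[a]}$ as part of the normal form, solving the homological equation only modulo these blocks, and to use the finer asymptotics \eqref{la-lb} (with the gain factor $c'c$ and exponent $\beta_3$) to show that off-diagonal blocks between \emph{different} spheres have divisors bounded below after excision and contribute decaying, $\cM^b_{\ga,\vark}$-bounded corrections. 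Verifying that this block structure is stable under the iteration, that the measure estimates for the pair-divisors sum (this is where A2 does the real work, presumably via a careful counting of resonant $(k,a,b)$ using the $\beta_3$-separation), and that the algebra norm $\aa{\cdot}_{\ga,\vark}$ is genuinely preserved under the non-linear operations of the KAM step (composition, solving the homological equation) are the technically hardest points and would occupy the bulk of the full proof.
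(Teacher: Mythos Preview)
Your high-level strategy (KAM iteration, homological equation modulo block-diagonal average, excision of resonant parameters via A2, convergence of the composed flows) is correct and matches the paper's architecture. You also correctly identify the main obstacle: the second-Melnikov conditions and the need to keep a block-diagonal quadratic part in the normal form rather than diagonalising within spheres. Two points, however, are handled differently in the paper, and the way you describe them your scheme would not close.

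\medskip

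\textbf{(a) Blocks are not the spheres during the iteration.} You propose to solve the homological equation modulo the sphere-blocks $[a]=[a]_\infty=\{b:|b|=|a|\}$. The paper does \emph{not} do this: it works with the finite-$\Delta$ blocks $[a]_\Delta$ (Definition~\ref{d_31} and Proposition~\ref{blocks}), whose diameter $d_\Delta\le C\Delta^{(d_*+1)!/2}$ is bounded independently of $a$, and lets $\Delta=\Delta_j\nearrow\infty$ along the iteration. This is essential: the bound on the quadratic part $S_{ww}$ of the homological solution (Lemma~\ref{prop:homo4}) carries a factor $e^{2\gamma d_\Delta}$, reflecting that the block-diagonal operator $A(\rho)$ must be inverted blockwise and the exponential weight in $\cM_{\gamma,\varkappa}$ is lost across each block. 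For $\Delta=\infty$ the sphere $\{|b|=|a|\}$ has diameter (in the pseudometric $[a-b]$) of order $|a|$, so $d_\infty=\infty$ and the estimate is vacuous. The growing-$\Delta$ scheme is what allows the paper to trade analyticity in $\gamma_1$ against block size, and is the reason the \emph{final} normal form in (iii) is sphere-block-diagonal while the intermediate ones are not.

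\medskip

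\textbf{(b) The linear homological equation does not give the quadratic gain you claim.} With the linear equation $\{h^{(n)},S_n\}+f^{(n),T}=h_++R$ you write, the new jet $f^{(n+1),T}$ contains the term $\{f^{(n)}-f^{(n),T},S_n\}^T$. Since $|f^{(n)}-f^{(n),T}|$ stays of order $\xi\sim\eps_1$ (the non-jet part is never killed), while $|S_n|\sim\eps_n/\kappa_n$, this term is $O(\eps_1\eps_n/\kappa_n)$ --- linear in $\eps_n$, not $\eps_n^{3/2}$. Your announced recursion $\eps_{n+1}=C\eps_n^{3/2}$ therefore does not follow from your scheme. The paper resolves this in two coupled ways: first it solves the \emph{non-linear} homological equation
\[
\{h,S\}+\{f-f^T,S\}^T+f^T=h_++R
\]
(Proposition~\ref{thm-Eq}), which absorbs exactly the offending term into the definition of $S$; second, because even then the residual $\{k+f^T,S\}^T$ is only linear in $\eps_n$ with ratio $\sim\eps_1/\kappa$, the paper does \emph{not} run a single super-quadratic loop but combines a \emph{finite linear iteration at fixed $\kappa$} (Lemma~\ref{Birkhoff}), which drives $\eps$ down to an arbitrary power of itself, with an outer infinite iteration in which $\kappa_j,\Delta_j,\gamma_j$ are updated (Lemma~\ref{numerical2}). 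This two-scale mechanism, already flagged in the introduction, is what replaces your $\eps_n^{3/2}$ and is not optional.

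\medskip

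Apart from these two points your outline is sound; in particular the r\^ole you assign to A2 in the measure estimates and to R1--R2 in keeping $S$ inside $\cT_{\gamma,\varkappa}$ matches the paper (Section~\ref{s6} for the small-divisor lemmas, Propositions~\ref{lemma:poisson}--\ref{Summarize} for the algebra and flow estimates).
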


We shall give a more precise formulation of this result in Theorem \ref{main} and its Corollary \ref{cMain}.

\subsubsection{A singular perturbation problem}

We want to apply this theorem to construct  small-amplitude solutions of 
 the multi-dimensional beam equation on the torus:
$$ u_{tt}+\Delta^2 u+m  u =   -  g(x,u)\,,\quad u=u(t,x), \quad  \ x\in \T^{d_*}.$$
Here $g$ is a real analytic function  satisfying 
 $$ g(x,u)=4u^3+ O(u^4).$$
Writing it in Fourier components, and introducing action-angle variables for the modes in (an arbitrary finite subset)
$\cA$, the linear part becomes a Hamiltonian system with a Hamiltonian $h$ of the form \eqref{equation1.1}, with $\F$ void.
$h$ satisfies (for all $m>0$) condition A1, but not condition A2. 

The way to improve on $h$ is to use a (partial) Birkhoff normal form around $u=0$ in order to extract a piece from the non-linear part which improves on $h$. This leads to a situation where the assumptions A1 and A2 and the size of the perturbation are linked -- a singular perturbation problem.

In order to apply the theorem to such a singular situation one needs a careful and precise description of how the smallness
requirement depends on the (parameters determining) assumptions A1 and A2. This is quite a serious complication which is carried out in this paper and the precise description of the smallness requirement  is given in Theorem \ref{main}.

This normal form theorem improves on the result in \cite{GY06a} and \cite{GY06b} in two respects. 
\begin{itemize}
\item
We have imposed no ``conservation of momentum''  on the perturbation --  this  has the effect that our normal form is not diagonal in the purely elliptic directions. In this respect it resembles the normal form obtained for the non-linear 
Schr\"odinger equation obtained in \cite{EK10} and the block diagonal form is the same.
\item
We have a finite-dimensional, possibly hyperbolic, component, whose treatment requires higher smoothness in the parameters.
\end{itemize}

The proof has no real surprises. It is a classical KAM-theorem carried out in a complex situation. The main part is,
as usual, the solution of the homological equation with reasonable estimates. The fact that the block structure is not
diagonal complicates, but this was also studied in for example \cite{EK10}. The iteration combines a finite linear iteration
with a ``super-quadratic'' infinite iteration. This has become quite common in KAM and was also used for example in
\cite{EK10}.


  \subsubsection{Notation and agreements.} 
 $\langle a\rangle =\max(|a|,1)$.
${\mathbf i}$ will denote the complex imaginary unit. ${\overline z}$ is the complex-conjugate of $z\in\C$. By $\langle\zeta,\zeta'\rangle_{\C^n}$ we denote the standard Hermitian scalar product in $\C^n$,
conjugate-linear in the first variable and linear in the second variable. 


All Euclidean spaces, unless otherwise stated, are provided with the Euclidean norm denoted by $|\cdot|$. For two
subsets $X$ and $Y$ of a Euclidean space we denote
$$\dist(X,Y)=\inf_{x\in X,\ y\in Y}|x-y|$$
and
$$\diam(X)=\sup_{x,y\in X}|x-y|.$$


The space of bounded  linear operators between two Banach spaces $X$  and $Y$ is denoted  $\cB(X;Y)$. Its operator norm will usually be denoted $\|\cdot\|$ without specification of the spaces. Our complex Banach spaces will be complexifications of some ``natural'' real Banach spaces which in general are implicit.
An analytic function between domains of two complex Banach spaces is called real holomorphic if it gives real values
to real arguments.

The  sets $\cA, \F,\L_\infty, \P$, as well as  the ``starred'' constants $d_*, {s_*},m_*$ will be fixed in this paper  --  the dependence of them is usually not indicated. 

Constants depending only on  the dimensions $\#\cA, \#\F,\#\P$, on  ${d_*},{s_*},m_*$
and on the choice of finite-dimensional norms
are regarded as {\it absolute constants}. An absolute constant only depending on $\beta$ is thus a constant that only depends on  $\beta$, besides these factor.
Arbitrary constants will often be denoted by $\Cte$, $\cte$ and, when they occur as an exponent,  $\exp$. Their values may change from line to line. For example we allow ourselves to write $2\Cte\le \Cte$.

\subsubsection{Acknowledgement} The authors acknowledge the support from the project ANR-10-BLAN 0102
of the Agence Nationale de la Recherche.

\section{Preliminaries.}

\subsection{A matrix algebra}\label{ssMatrixAlgebra}
\ 
The mapping
\be\label{pdist}
(a,b)\mapsto [a-b]=\min (|a-b|,|a+b|)\ee
is a pseudo-metric on $\Z^{d_*}$, i.e. verifying all the relations of a metric with the only exception that
$[a-b]$ is $=0$ for some $a\not=b$.
This is most easily seen by observing that $[a-b]=\textrm{d}_{\Haus}(\{\pm a\},\{\pm b\})$.
We have $ [a-0]=|a|$.

Define, for any $\ga=(\ga_1,\ga_2)\ge(0,0)$ and $\vark\ge0$,
$$e_{\ga,\vark}(a,b)=Ce^{\ga_1[a-b]}\max([a-b],1)^{\ga_2}\min(\langle a\rangle,\langle b\rangle)^\vark.$$

\begin{lemma}\label{lWeights}
\ 
\begin{itemize}
\item[(i)] If $\ga_1, \ga_2- \vark\ge0$, then 
$$e_{\ga, \vark}(a,b)\le   e_{\ga,0}(a,c) e_{\ga,\vark}(c,b),\quad \forall a,b,c,$$
if $C$ is sufficiently large (bounded with  $\ga_2,\vark$).

\item[(ii )] If $-\ga\le\tilde \ga\le \ga$, then
$$e_{\tilde\ga, \vark}(a,0)\le   e_{\ga,\vark}(a,b) e_{\tilde\ga,\vark}(b,0),\quad \forall a,b$$
if $C$ is sufficiently large  (bounded with  $\ga_2,\vark$).
\end{itemize}
\end{lemma}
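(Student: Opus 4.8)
The plan is to reduce both inequalities to elementary estimates on the three pieces making up $e_{\ga,\vark}$: the exponential factor $e^{\ga_1[\,\cdot\,]}$, the polynomial factor $\max([\,\cdot\,],1)^{\ga_2}$, and the ``cutoff'' factor $\min(\langle\cdot\rangle,\langle\cdot\rangle)^\vark$. The key structural fact is that $[a-b]$ is a pseudo-metric, so it satisfies the triangle inequality $[a-b]\le[a-c]+[c-b]$; in particular $e^{\ga_1[a-b]}\le e^{\ga_1[a-c]}e^{\ga_1[c-b]}$ for $\ga_1\ge0$, which handles the exponential part of (i) outright. For (ii), I would use $[a-0]=|a|$ and the triangle inequality in the form $|a|=[a-0]\le[a-b]+[b-0]=[a-b]+|b|$, but now with the \emph{signed} exponent $\tilde\ga_1$, which may be negative: if $\tilde\ga_1\ge0$ one gets $e^{\tilde\ga_1|a|}\le e^{\tilde\ga_1[a-b]}e^{\tilde\ga_1|b|}\le e^{\ga_1[a-b]}e^{\tilde\ga_1|b|}$ since $\tilde\ga_1\le\ga_1$; if $\tilde\ga_1<0$ one instead writes $|b|\le[a-b]+|a|$, so $e^{\tilde\ga_1|a|}=e^{-|\tilde\ga_1|\,|a|}\le e^{|\tilde\ga_1|\,[a-b]}e^{-|\tilde\ga_1|\,|b|}\le e^{\ga_1[a-b]}e^{\tilde\ga_1|b|}$, using $|\tilde\ga_1|\le\ga_1$. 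So in both sign cases the exponential factor is controlled, at the cost of absorbing everything into the larger weight $e_{\ga,\vark}(a,b)$.

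Next I would treat the polynomial factor $\max([\,\cdot\,],1)^{\ga_2}$. One needs a ``quasi-submultiplicativity'' of the form $\max([a-b],1)\le\cte\,\max([a-c],1)\max([c-b],1)$, which follows again from $[a-b]\le[a-c]+[c-b]$ together with $x+y\le 2xy$ for $x,y\ge1$; raising to the power $\ga_2$ produces an absolute constant depending only on $\ga_2$, which is where the ``$C$ sufficiently large, bounded with $\ga_2,\vark$'' enters. For (ii) the same argument with one endpoint equal to $0$ gives $\max(|a|,1)\le\cte\,\max([a-b],1)\max(|b|,1)$. The subtle point is that in (i) the polynomial weight with exponent $\ga_2$ must be split as $\ga_2=(\ga_2-\vark)+\vark$ — wait, not quite; rather the right-hand side has a factor $\max([a-c],1)^{\ga_2}$ with the \emph{full} exponent $\ga_2$ (this is $e_{\ga,0}(a,c)$, which still carries $\ga_2$) and $\max([c-b],1)^{\ga_2}$ from $e_{\ga,\vark}(c,b)$, so there is in fact a surplus of one factor $\max([\,\cdot\,],1)^{\ga_2}\ge1$ on the right, and the inequality only needs the hypothesis $\ga_2-\vark\ge0$ to make the \emph{cutoff} bookkeeping work, not the polynomial one.

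Finally, the cutoff factor. For (i) I claim $\min(\langle a\rangle,\langle b\rangle)^\vark\le\max([a-b],1)^{\ga_2}\min(\langle a\rangle,\langle c\rangle)^{0}\cdot\min(\langle c\rangle,\langle b\rangle)^\vark$, i.e.\ after cancelling $\min(\langle c\rangle,\langle b\rangle)^\vark$ one must bound $\min(\langle a\rangle,\langle b\rangle)^\vark$ by $\max([a-b],1)^{\ga_2}\min(\langle c\rangle,\langle b\rangle)^\vark$; here one distinguishes whether $\langle c\rangle\ge\langle b\rangle$ (then done, since $\min(\langle a\rangle,\langle b\rangle)\le\langle b\rangle$) or $\langle c\rangle<\langle b\rangle$, in which case $\langle b\rangle\le\langle c\rangle+\langle b-c\rangle\cdots$ — more cleanly, $\langle b\rangle\le\cte\max([b-c],1)\langle c\rangle$ and one needs the missing power of $\langle b\rangle$ (at most $\vark$) to be absorbed by the spare $\max([b-c],1)^{\ga_2}$, which requires exactly $\ga_2\ge\vark$. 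A symmetric case analysis handles when the minimum on the left is $\langle a\rangle$. This case-checking, keeping track of which of the three right-hand factors absorbs which leftover power, is the only genuinely fiddly part of the proof; everything else is the pseudo-metric triangle inequality plus $x+y\le2xy$. For (ii) the cutoff bookkeeping is the same but with $b$ and $0$ as the two ``reference'' endpoints and with the signed $\tilde\vark=\vark$ appearing identically on both sides, so it reduces to $\langle a\rangle^\vark\le\cte\max([a-b],1)^\vark\langle b\rangle^\vark$ which is immediate from the triangle inequality. I expect the main obstacle to be purely organizational: doing the finitely many min/max case distinctions in (i) cleanly enough that the single hypothesis $\ga_2-\vark\ge0$ is seen to suffice in each branch.
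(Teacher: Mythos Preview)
Your overall plan---separate the exponential, polynomial and cutoff factors and control each via the pseudo-metric triangle inequality---is the same as the paper's. The exponential parts of both (i) and (ii) are fine, and your treatment of (ii) is essentially correct (one harmless slip: the cutoff factors $\min(\langle a\rangle,\langle 0\rangle)^\vark$ and $\min(\langle b\rangle,\langle 0\rangle)^\vark$ are both equal to $1$, not $\langle a\rangle^\vark$ and $\langle b\rangle^\vark$, so the cutoff in (ii) is actually trivial and only helps on the right).

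There is, however, a genuine bookkeeping gap in your plan for (i). You want to use the multiplicative bound
\[
\max([a-b],1)^{\ga_2}\le 2^{\ga_2}\max([a-c],1)^{\ga_2}\max([c-b],1)^{\ga_2}
\]
to absorb the left-hand polynomial factor, and you then assert that ``there is in fact a surplus of one factor $\max([\,\cdot\,],1)^{\ga_2}\ge1$ on the right'' available to handle the cutoff mismatch. This is not so: the right-hand side carries exactly two polynomial factors, and the multiplicative bound above consumes both. What remains to prove after that step is
\[
\min(\langle a\rangle,\langle b\rangle)^\vark \lesssim \min(\langle c\rangle,\langle b\rangle)^\vark,
\]
which is simply false (take $a=b$ large and $c=0$). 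Nor can you borrow only one of the two polynomial factors for the cutoff and leave the other to dominate $\max([a-b],1)^{\ga_2}$ alone: taking $a=c$ (resp.\ $b=c$) far from the third point kills that.

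The paper's fix is to replace your multiplicative bound by the \emph{additive} one,
\[
\max([a-b],1)\le \max([a-c],1)+\max([c-b],1),
\]
and then (after reducing to $\ga_2=1$ by homogeneity) treat the two resulting summands separately. In each summand one of the two right-hand polynomial factors matches the summand exactly, and the \emph{other} factor is genuinely spare and absorbs the cutoff mismatch via the elementary inequalities
\[
\max([c-b],1)\min(\langle c\rangle,\langle b\rangle)^\vark\gtrsim\langle b\rangle^\vark,\qquad
\max([a-c],1)\min(\langle c\rangle,\langle b\rangle)^\vark\gtrsim\min(\langle a\rangle,\langle b\rangle)^\vark,
\]
both of which use exactly $\ga_2\ge\vark$. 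This additive-versus-multiplicative distinction is the one organizational point you are missing; once you make it, your case analysis goes through.
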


\begin{proof} 
(i). Since $[a-b]\le [a-c]+[c-b]$ it is sufficient to prove this for $\ga_1=0$. If $\ga_2=0$ then the statement holds for any
$C\ge1$, so it is sufficient to consider $\ga_2>0$ and, hence $\ga_2=1$. Then we want to prove
\begin{multline*}
\max([a-b],1)\min(\langle a\rangle,\langle b\rangle)^\vark\\ \le C \max([a-c],1)\max([c-b],1)
\min(\langle c \rangle,\langle b\rangle)^\vark.\end{multline*}
Now $\max([a-b],1)\le \max([a-c],1)+\max([c-b],1)$,
$$
\max([c-b],1)
\min(\langle c \rangle,\langle b\rangle)^\vark\gsim  \langle b\rangle^\vark,
$$
and
$$
\max([a-c],1)
\min(\langle c \rangle,\langle b\rangle)^\vark\gsim  \min(\langle a \rangle,\langle b\rangle)^\vark.$$
This gives the estimate.

(ii) Again it suffices to prove this for $\ga_1=0$ and $\ga_2=1$. Then we want to prove
$$
\max(\ab{a},1)^{\tilde \ga_2}  \le C \max([a-b],1)\min(\langle a \rangle,\langle b\rangle)^\vark \max(\ab{b},1)^{\tilde \ga_2}.$$
The inequality is fulfilled with $C\ge1$ if $a$ or $b$ equal $0$. Hence we need to prove
$$
\ab{a}^{\tilde \ga_2}  \le C \max([a-b],1)\min(\langle a \rangle,\langle b\rangle)^\vark \ab{b}^{\tilde \ga_2}.$$
Suppose $\tilde\ga_2\ge0$. If $\ab{a}\le 2 \ab{b}$ then this holds for any  $C\ge2$. If $\ab{a}\ge2\ab{b}$ then
$[a-b]\ge \frac12\ab{a}$ and the statement holds again  for any  $C\ge2$.

If instead $\tilde\ga_2<0$, then we get the same result with $a$ and $b$ interchanged.

\end{proof} 

\subsubsection {The space $\cM_{\ga,\vark}$}\label{sM2}

We shall consider matrices $A:\L\times\L\to gl(2,\C)$, formed by $2\times2$-blocs,
(each $A_a^{b}$ is a $2\times2$-matrix). Define
$$|A|_{\ga,\vark}=\max\left\{\begin{array}{l}
\sup_a\sum_{b} \ab{A_a^b} e_{\ga,\vark}(a,b)\\
\sup_b\sum_{a} \ab{A_a^b} e_{\ga,\vark}(a,b),
\end{array}\right.$$
where the norm on $A_a^{b}$ is the matrix operator norm.

Let $\cM_{\ga,\vark}$ denote the space of all matrices $A$ such that
$\ab{A}_{\ga,\vark}<\infty$.  Clearly $\ab{\cdot}_{\ga,\vark}$ is a norm
on $\cM_{\ga,\vark}$. It follows by well-known results that $\cM_{\ga,\vark}$,
provided with this norm, is a Banach space.

Transposition --  $({}^tA)_a^b={}^t\!A_b^a$ --  and $\C$-conjugation --
$(\overline{A})_a^b={\overline{A_a^b}})$  -- do not change 
this norm.The identity matrix is in $\cM_{\ga,\vark}$ if, and only if, $\vark=0$, and then $|I|_{\ga,0}=C$.

\subsubsection{Matrix multiplication}

We define (formally) the {\it matrix product}
$$(AB)_a^b=\sum_{c} A_a^cB_c^b.$$
Notice that complex conjugation, transposition and taking the adjoint behave in the usual
way under this formal matrix product.

\begin{proposition}\label{pMatrixProduct} Let $\ga_2\ge\vark$. 
If $A\in \cM_{\ga,0}$ and $B\in \cM_{\ga,\vark}$,   then
$AB$ and $BA\in \cM_{\ga,\vark}$ and
$$
\ab{{AB} }_{\ga,\vark}\ \textrm{and}\ 
\ab{{BA} }_{\ga,\vark}\le \ab{A}_{\ga,0} \ab{B}_{\ga,\vark}.$$ 

\end{proposition}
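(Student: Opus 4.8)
The plan is to estimate the two matrix norms of $AB$ (the estimate for $BA$ being symmetric, by transposition, since transposition preserves all the norms involved). The starting point is the defining expression for the norm of a product of blocks: for fixed $a,b$,
$$(AB)_a^b=\sum_c A_a^c B_c^b,$$
so $\ab{(AB)_a^b}\le\sum_c\ab{A_a^c}\,\ab{B_c^b}$ by submultiplicativity of the matrix operator norm. To control the $\ga,\vark$-norm I would insert the weight $e_{\ga,\vark}(a,b)$ and bound it, for each term of the sum, using Lemma \ref{lWeights}(i): since $\ga_2\ge\vark$ (and $\ga_1\ge0$, $\ga_2-\vark\ge0$), that lemma gives
$$e_{\ga,\vark}(a,b)\le e_{\ga,0}(a,c)\,e_{\ga,\vark}(c,b),\quad\forall a,b,c,$$
provided the constant $C$ in the definition of $e_{\ga,\vark}$ is chosen large enough (bounded in terms of $\ga_2,\vark$, which are fixed). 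This is exactly the sub-multiplicativity built into the weights, and it is the device that makes the whole argument work.

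\textbf{Main step.} Combining these, for any fixed $a,b$,
$$\ab{(AB)_a^b}\,e_{\ga,\vark}(a,b)\le\sum_c\ab{A_a^c}\,e_{\ga,0}(a,c)\;\ab{B_c^b}\,e_{\ga,\vark}(c,b).$$
Now I take $\sup_a\sum_b$ of both sides. On the right, sum first in $b$ and then in $c$: for each fixed $c$,
$$\sum_b\ab{B_c^b}\,e_{\ga,\vark}(c,b)\le\ab{B}_{\ga,\vark}$$
by the row-bound in the definition of $\ab{\cdot}_{\ga,\vark}$; hence
$$\sum_b\ab{(AB)_a^b}\,e_{\ga,\vark}(a,b)\le\Big(\sum_c\ab{A_a^c}\,e_{\ga,0}(a,c)\Big)\ab{B}_{\ga,\vark}\le\ab{A}_{\ga,0}\,\ab{B}_{\ga,\vark},$$
using the row-bound for $A$ in the $\ab{\cdot}_{\ga,0}$-norm. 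Taking $\sup_a$ gives one half of the desired estimate. For the column bound $\sup_b\sum_a$, I instead sum first in $a$ and then in $c$: for fixed $c$, $\sum_a\ab{A_a^c}\,e_{\ga,0}(a,c)\le\ab{A}_{\ga,0}$ by the column-bound for $A$, and then $\sum_c\ab{B_c^b}\,e_{\ga,\vark}(c,b)\le\ab{B}_{\ga,\vark}$ by the column-bound for $B$; together these yield $\sup_b\sum_a\ab{(AB)_a^b}\,e_{\ga,\vark}(a,b)\le\ab{A}_{\ga,0}\,\ab{B}_{\ga,\vark}$. Taking the max of the two bounds gives $\ab{AB}_{\ga,\vark}\le\ab{A}_{\ga,0}\,\ab{B}_{\ga,\vark}$, and in particular $AB\in\cM_{\ga,\vark}$. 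Finally, $BA=({}^t({}^tA\,{}^tB))$; since ${}^tA\in\cM_{\ga,0}$ and ${}^tB\in\cM_{\ga,\vark}$ with the same norms, the bound just proved applies to ${}^tA\,{}^tB$, and transposition preserves $\ab{\cdot}_{\ga,\vark}$, so $\ab{BA}_{\ga,\vark}\le\ab{A}_{\ga,0}\,\ab{B}_{\ga,\vark}$ as well.

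\textbf{Main obstacle.} There is no serious obstacle here: the proof is a Schur-test / Young-type argument, and everything hinges on the sub-multiplicativity of the weights, Lemma \ref{lWeights}(i), which is already established. The only points requiring minor care are (a) checking that the hypothesis $\ga_2\ge\vark$ is precisely what is needed to invoke Lemma \ref{lWeights}(i) (the asymmetric roles of $A$ and $B$ in the statement — $A$ carrying the $0$-weight, $B$ the $\vark$-weight — match the asymmetry $e_{\ga,\vark}(a,b)\le e_{\ga,0}(a,c)e_{\ga,\vark}(c,b)$ in that lemma), and (b) justifying the interchange of the order of summation, which is legitimate since all terms are non-negative (Tonelli). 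One should also note that the absolute constant $C$ hidden in $e_{\ga,\vark}$ must be the same one used consistently throughout, large enough for Lemma \ref{lWeights}(i); this is harmless since $C$ is fixed once and for all depending only on $\ga_2,\vark$.
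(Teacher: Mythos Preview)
Your proof is correct and follows essentially the same approach as the paper: both rely on Lemma~\ref{lWeights}(i) to split the weight $e_{\ga,\vark}(a,b)$ as $e_{\ga,0}(a,c)\,e_{\ga,\vark}(c,b)$ and then apply a Schur-type double-sum argument. The paper is terser (it treats only the row sum explicitly and says the column sum and the $BA$ case are ``similar''), whereas you spell out the column bound and handle $BA$ via transposition; both are fine, and your transposition argument is a clean way to avoid repeating the computation.
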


\begin{proof}
(i) We have, by Lemma \ref{lWeights}(i),
$$\sum_{b} \ab{( AB)_a^b} e_{\ga,\vark}(a,b)\le
\sum_{b,c} \ab{ A_a^c} \ab{ B_c^b} e_{\ga,\vark}(a,b)\le $$
$$\le\sum_{b,c}\ab{ A_a^c} \ab{ B_c^b} 
e_{\ga,0}(a,c)e_{\ga,\vark}(c,b)  $$
which is $\le \ab{A}_{\ga,0} \ab{B}_{\ga,\vark}$.
This implies in particular the existence of $(AB)_a^b$.

The sum over $a$ is shown to be  $\le \ab{A}_{\ga,0} \ab{B}_{\ga,\vark}$ in a similar way.
The estimate of $BA$ is the same.
\end{proof}

Hence $\cM_{\ga,0}$ is a Banach algebra, and $\cM_{\ga,\vark}$ is a closed ideal in $\cM_{\ga,0}$.
 
\subsubsection{The space $\cM_{\ga,\vark}^b$}
We define (formally) on $Y_\ga$ (see section \ref{ssThePhaseSpace})
$$(A\zeta )_a=\sum_{b} A_a^b \zeta _b.$$

\begin{proposition}\label{pMatrixBddOp}
Let $-\ga\le\tilde \ga\le\ga$.
If $A\in\cM_{\ga,\vark}$ and 
$\zeta \in Y_{\tilde \ga}$,  then $A\zeta \in Y _{\tilde \ga}$ and
$$
\aa{A\zeta }_{\tilde \ga}\le \ab{A}_{\ga,\vark}\aa{\zeta }_{\tilde\ga}.$$
\end{proposition}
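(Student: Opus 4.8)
The plan is to estimate the weighted $\ell^2$-norm $\aa{A\zeta}_{\tilde\ga}$ directly from the matrix entries, using the sub-multiplicativity of the weights from Lemma \ref{lWeights}(ii) together with a Schur-test argument. First I would write out
$$
\aa{A\zeta}_{\tilde\ga}^2=\sum_{a\in\L}\ab{(A\zeta)_a}^2 e_{\tilde\ga,0}(a,0)^2
=\sum_{a\in\L}\Big|\sum_{b\in\L}A_a^b\zeta_b\Big|^2 e_{\tilde\ga,0}(a,0)^2,
$$
and bound the inner quantity by $\big(\sum_b\ab{A_a^b}\,\ab{\zeta_b}\big)^2$ via the triangle inequality and the operator-norm bound on each block $A_a^b$.

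The key step is to redistribute the weight: by Lemma \ref{lWeights}(ii), applied with $\vark$ as in the statement (and noting $e_{\tilde\ga,0}\le e_{\tilde\ga,\vark}$ when $\vark\ge 0$, or more directly using the inequality in the form needed), one has
$$
e_{\tilde\ga,0}(a,0)\le \Cte\, e_{\ga,\vark}(a,b)\, e_{\tilde\ga,0}(b,0)\qquad\text{for all }a,b,
$$
so that each term $\ab{A_a^b}\,\ab{\zeta_b}\,e_{\tilde\ga,0}(a,0)$ is controlled by $\ab{A_a^b}\,e_{\ga,\vark}(a,b)\cdot\ab{\zeta_b}\,e_{\tilde\ga,0}(b,0)$. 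After this I would apply the Cauchy--Schwarz inequality in the $b$-sum, splitting the factor $\ab{A_a^b}\,e_{\ga,\vark}(a,b)$ as $\sqrt{\ab{A_a^b}\,e_{\ga,\vark}(a,b)}\cdot\sqrt{\ab{A_a^b}\,e_{\ga,\vark}(a,b)}$, to get
$$
\Big(\sum_b \ab{A_a^b}\,e_{\ga,\vark}(a,b)\,\ab{\zeta_b}\,e_{\tilde\ga,0}(b,0)\Big)^2
\le\Big(\sum_b\ab{A_a^b}\,e_{\ga,\vark}(a,b)\Big)\Big(\sum_b\ab{A_a^b}\,e_{\ga,\vark}(a,b)\,\ab{\zeta_b}^2 e_{\tilde\ga,0}(b,0)^2\Big).
$$
The first factor is $\le\ab{A}_{\ga,\vark}$ by definition of the norm on $\cM_{\ga,\vark}$.

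It then remains to sum over $a$ and use the other half of the definition of $\ab{A}_{\ga,\vark}$ (the supremum over $b$ of the $a$-sum) to bound $\sum_a\ab{A_a^b}\,e_{\ga,\vark}(a,b)\le\ab{A}_{\ga,\vark}$ uniformly in $b$, whence
$$
\aa{A\zeta}_{\tilde\ga}^2\le\ab{A}_{\ga,\vark}\sum_b\Big(\sum_a\ab{A_a^b}\,e_{\ga,\vark}(a,b)\Big)\ab{\zeta_b}^2 e_{\tilde\ga,0}(b,0)^2
\le\ab{A}_{\ga,\vark}^2\,\aa{\zeta}_{\tilde\ga}^2,
$$
which gives the claim (the constant $\Cte$ from Lemma \ref{lWeights} is absorbed into the normalization constant $C$ hidden in $e_{\ga,\vark}$, so no extra constant appears). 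The main obstacle — really the only nontrivial point — is verifying that Lemma \ref{lWeights}(ii) applies with the stated $\vark$ and sign constraints $-\ga\le\tilde\ga\le\ga$, i.e. that the weight $e_{\tilde\ga,0}(a,0)$ (rather than $e_{\tilde\ga,\vark}$) is the one being split; one checks that $e_{\tilde\ga,0}(a,0)\le e_{\tilde\ga,\vark}(a,0)$ for $\vark\ge0$ since $\min(\langle a\rangle,1)^\vark=1$, so Lemma \ref{lWeights}(ii) indeed delivers exactly the needed inequality. Everything else is the standard Schur test.
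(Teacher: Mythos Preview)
Your proposal is correct and follows essentially the same approach as the paper's proof: redistribute the weight via Lemma~\ref{lWeights}(ii) (noting, as you do, that $e_{\tilde\ga,\vark}(a,0)=e_{\tilde\ga,0}(a,0)$ since $\min(\langle a\rangle,1)^\vark=1$), then apply a Schur/Cauchy--Schwarz split of $\ab{A_a^b}e_{\ga,\vark}(a,b)$ into two square roots, and use each half of the definition of $\ab{A}_{\ga,\vark}$ in turn. The paper organizes the same computation by writing $\ab{A_a^b}\ab{\zeta_b}e_{\tilde\ga,0}(a,0)=I\cdot(I\ab{\zeta_b}e_{\tilde\ga,0}(b,0))\cdot J$ with $I=\sqrt{\ab{A_a^b}e_{\ga,\vark}(a,b)}$ and $J\le1$, which is exactly your splitting.
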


\begin{proof}
Let $\zeta '=A\zeta $. We have
$$
\sum_{a} \ab{\zeta '_a}^2e_{\tilde \ga,0}(a,0)^2\le\sum_{a} 
\big(\sum_{b} \ab{A_a^b}\ab{\zeta _{b}} e_{\tilde \ga,0 }(a,0)\big)^2.$$
Write
$$\ab{A_a^b}\ab{\zeta _{b}}e_{\tilde \ga,0 }(a,0)
= I\times (I\ab{ \zeta _{b}}e_{\tilde \ga,0}(b,0))\times  J$$
where
$$I=I_{a,b}=\sqrt{\ab{A_a^b}    e_{\ga,\vark}(a,b)}.$$
Since,  by Lemma \ref{lWeights}(ii),
$$J=
\frac{e_{\tilde \ga,0}(a,0)}{e_{\ga,\vark}(a,b)e_{\tilde\ga,0}(b,0)}\le 1,$$
we get,  by H\"older,
\begin{multline*}
\sum_{a} \ab{\zeta' _a}^2e_{\tilde \ga,0}(a,0)^2\le  
\sum_{a} (\sum_{b} I_{a,b}^2)( \sum_{b} I_{a,b}^2\ab{ \zeta _{b}}^2e_{\tilde\ga,0}(b,0)^2)\\
\le \ab{A}_{\ga,\vark}    \sum_{a,b} I_{a,b}^2\ab{\zeta _{b}}^2e_{\tilde\ga,0}(b,0)^2
\le  \ab{A}_{\ga,\vark}    \sum_{b} \ab{ \zeta _{b}}^2e_{\tilde \ga,0}(b,0)^2\sum_{a}I_{a,b}^2\le
\end{multline*}
$$\le \ab{A}_{\ga,\vark}^2      \aa{\zeta }_{\tilde \ga}^2. $$
This shows that $y_a$ exists  for all $a$, and it also proves the estimate. 
\end{proof}

We have thus, for any $-\ga\le \tilde\ga\le\ga$, a continuous embedding of $\cM_{\ga,\vark}$,
$$\cM_{\ga,\vark}\hookrightarrow \cM_{\ga,0}\to
 \cB(Y_{\tilde \ga};Y_{\tilde \ga}),$$
into the space of bounded linear operators on $Y_{\tilde \ga}$. Matrix multiplication in $\cM_{\ga,\vark}$
corresponds to composition of operators.

\smallskip

For our applications we must consider a larger sub algebra with somewhat weaker decay properties.
For $\ga=(\ga_1,\ga_2)\ge \ga_*=(0,m_*)$, let
$$\cM_{\ga,\vark}^b= \cB(Y_{\ga};Y_{\ga})\cap \cM_{(\ga_1,\ga_2+\vark-m_*),\vark}$$
which we provide with the norm
$$
\aa{A}_{\ga,\vark }=\aa{A}_{ \cB(Y_{\ga};Y_{\ga})}+ \ab{A}_{(\ga_1,\ga_2+\vark-m_*),\vark }.$$
This norm makes $\cM_{\ga,0}^b$ into a Banach sub-algebra of $\cB(Y_{\ga};Y_{\ga})$
and $\cM_{\ga,\vark}^b$ becomes a closed ideal in $\cM_{\ga,0}^b$.

\subsection{Functions}
Let
$$0\le  \vark\le m_*$$
and let
$$\ga=(\ga_1,m_*)\ge \ga_*=(0,m_*).$$
 
\subsubsection{The function space  $\cT_{\ga,\vark} $ }
Consider the space of  functions
$f:\O_{\ga_*}(\s, \mu)\to \C$
which are  real holomorphic   up to the boundary (rhb) 
of $\O_{\ga_*}(\s, \mu)$. This implies that for any $\ga\ge\ga_*$
$$f:\O_{\ga}(\s, \mu)\to \C$$
$$Jd f:\O_\ga(\s, \mu)\to Y_{-\ga}$$
and
$$Jd^2f:\O_\ga(\s, \mu)\to \cB(Y_\ga,Y_{-\ga}) $$
are also rhb. We define  $\cT_{\ga,\vark}(\s,\mu) $ to be the space of such functions such that
 for any $\ga_*\le \ga'\le\ga$,
$$\begin{array}{lll}
R_1  & -  &  Jd f:\O_{\ga'}(\s, \mu)\to Y_{\ga'}  \\
R_2  & -  &  Jd^2f:\O_{\ga'}(\s, \mu)\to  \cM_{\ga',\vark}^b
\end{array}$$
are rhb. 

We provide  $\cT_{\ga,\vark} (\s, \mu)$ with the norm \eqref{norm}.  The higher differentials $d^{k+2}f$  can be estimated by Cauchy estimates on some smaller domain in terms of this norm.
This norm makes $\cT_{\ga,\vark}(\s, \mu) $ into a Banach space and a Banach algebra with the constant function
$f=1$ as unit.

\begin{remark*}
The differential forms  $d^{k+2}f(x)$, $x\in \O_{\ga'}(\s, \mu)$, are canonically identified with three bounded 
linear maps
$$
\begin{array}{lll}
R_0  & -  & \bigotimes_{k+2} Y_{\ga'}=\underbrace{Y_{\ga'}\otimes\dots\otimes Y_{\ga'}}_{k+2\ times}\longrightarrow \C\\
R_1  & -  & \bigotimes_{k+1} Y_{\ga'}\longrightarrow  Y_{-\ga'}^*\overset{J}{\longrightarrow} Y_{\ga'}\\
R_2  & -  &\bigotimes_{k} Y_{\ga'} \longrightarrow J^{-1}\cM_{\ga',\vark}^b
\overset{J}{\longrightarrow} \cM_{\ga',\vark}^b,
\end{array}$$
where $\otimes$ is the symmetric tensor product (over $\C$).

\end{remark*}

 \subsubsection{The function space  $\cT_{\ga,\vark,\D} $ }
  
Let $\D$ be the unit ball in $\R^{\P}$.  We shall consider functions
$$f:\O_{\ga^*}(\s, \mu)\times \D\to \C$$
which are of class $\cC^{{s_*}}$.  We say that $f\in \cT_{\ga,\vark,\D}(\s,\mu)  $ if, and only if,  
$$\frac{\p^j f}{\p \r^j}(\cdot,\r)\in \cT_{\ga,\vark}(\s,\mu) $$
for any $\r\in\D$ and any $\ab{j}\le {s_*}$. We provide this space by the norm
$$
|f|_{\begin{subarray}{c}\s,\mu\ \ \\ \ga, \vark,\D  \end{subarray}}=
\max_{\ab{j}\le {s_*}}\sup_{\r\in\D}
|\frac{\p^j f}{\p \r^j}(\cdot,\r)|_{\begin{subarray}{c}\s,\mu\\ \ga, \vark  \end{subarray}}$$
This norm makes $\cT_{\ga,\vark,\D}(\s,\mu)  $ into a Banach space and a Banach algebra.

\subsubsection{ Jets of functions.} \label{ss5.1}

For any function $f\in \Tc_{\ga,\vark,\D}(\s,\mu)$ we define its jet $f^T(x)$, $x=(r,\theta,w)$, as the following 
 Taylor polynomial of $f$ at $r=0$ and $w=0$
\be
\label{jet}
f(0,\theta,0)+d_r f(0,\theta,0)[r] +d_w f(0,\theta,0)[w]+\frac 1 2 d^2_wf(0,\theta,0)[w,w] 
\ee
Functions of the form  $f^T$ will be called {\it jet-functions.}

\begin{proposition}\label{lemma:jet}
Let $f\in \Tc_{\ga,\vark,\D}(\s,\mu)$. Then
$f^T\in \Tc_{\ga,\vark,\D}(\s,\mu)$ and 
$$
|f^T|_{\begin{subarray}{c}\s,\mu \  \\ \ga, \vark, \D \end{subarray}}
\leq C |f|_{\begin{subarray}{c}\s,\mu\ \ \\ \ga, \vark,\D  \end{subarray}}.$$

$C$ is an absolute constant. 
\end{proposition}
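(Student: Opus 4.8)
The plan is to reduce the statement to Cauchy-type estimates, exploiting the fact that the jet $f^T$ is obtained from $f$ by evaluating $f$, $d_rf$, $d_wf$ and $d_w^2f$ at the slice $r=0$, $w=0$ and then re-assembling them as an explicit polynomial in $(r,w)$. Since the norm $|\cdot|_{\s,\mu,\ga,\vark,\D}$ is the max over $|j|\le s_*$ and over $\r\in\D$ of the norm $|\cdot|_{\s,\mu,\ga,\vark}$ applied to $\p_\r^j f$, and since the jet operation commutes with $\p_\r$ (differentiation in the parameter does not touch the Taylor expansion in $(r,w)$), it suffices to prove the estimate for a single $\r$, i.e. to show
$$
|f^T|_{\begin{subarray}{c}\s,\mu\\ \ga, \vark  \end{subarray}}\le C\,|f|_{\begin{subarray}{c}\s,\mu\\ \ga, \vark  \end{subarray}}
$$
for $f\in\cT_{\ga,\vark}(\s,\mu)$, with $C$ absolute. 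First I would record that the building blocks $f(0,\theta,0)$, $d_rf(0,\theta,0)$, $d_wf(0,\theta,0)$ and $d_w^2f(0,\theta,0)$ are, as functions of $\theta$, holomorphic on the appropriate $\theta$-domain and bounded there by $|f|_{\s,\mu,\ga,\vark}$: the zeroth and first order pieces directly from the three components of \eqref{norm} restricted to the slice $r=w=0$, and $d_w^2f(0,\theta,0)$ from the $R_2$-component, which gives exactly a bound in $\cM^b_{\ga',\vark}$, uniform for $\ga_*\le\ga'\le\ga$.

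The core is then to estimate the three components of \eqref{norm} for $f^T$, i.e. $\sup|f^T|$, $\sup\|Jd f^T\|_{\ga'}$, and $\sup\|Jd^2f^T\|_{\ga',\vark}$, on $\O_{\ga'}(\s,\mu)$. For $f^T$ itself: on $\O_{\ga'}(\s,\mu)$ one has $\|r\|\lesssim\mu$, $\|w\|_{\ga'}\lesssim\mu$, so $|d_rf(0,\theta,0)[r]|\lesssim\mu\,|f|$, $|d_wf(0,\theta,0)[w]|\lesssim\mu\,|f|$, and $|d_w^2f(0,\theta,0)[w,w]|\lesssim\mu^2\,|f|$ using that $Jd_w^2f(0,\theta,0)$ lies in $\cM^b_{\ga',\vark}\subset\cB(Y_{\ga'};Y_{\ga'})$ and $J$ is bounded $Y^*_{-\ga'}\to Y_{\ga'}$; here $\mu\approx1$ absorbs into $C$. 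The first differential $Jd f^T$ is a sum of: $Jd_\theta$ applied to the four (holomorphic in $\theta$) coefficient functions — estimated by a Cauchy estimate in $\theta$ on the smaller strip, losing only an absolute constant since $\s/2$ versus $\s$ costs $\Cte/\s\le\Cte$ after our conventions, and noting $Jd_\theta$ maps into $Y_{\ga'}$ because each coefficient, being a restriction of a component of $f$ obeying $R_1$, does; plus the ``radial'' terms $Jd_rf(0,\theta,0)$, $Jd_wf(0,\theta,0)+Jd_w^2f(0,\theta,0)[w]$, which are bounded in $Y_{\ga'}$ directly (the last using $\cM^b_{\ga',\vark}\subset\cB(Y_{\ga'};Y_{\ga'})$ and $\|w\|_{\ga'}\lesssim\mu$). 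Finally $Jd^2f^T$ has only a $\theta\theta$-block (Cauchy in $\theta$ applied to $d_w^2f(0,\theta,0)\in\cM^b_{\ga',\vark}$, with values in the closed subalgebra, so again an $\cM^b_{\ga',\vark}$-bound), a $\theta w$- and $w\theta$-block (Cauchy in $\theta$ applied to $d_wf$-type coefficients — these are $Y_{\ga'}$-valued, hence the corresponding block lies in $\cM^b_{\ga',\vark}$ after the identification of $R_2$ in the Remark), a $\theta r$- and $r\theta$-block, and the constant $ww$-block $d_w^2f(0,\theta,0)$ itself, again in $\cM^b_{\ga',\vark}$ by $R_2$. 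Each contribution is $\le\Cte\,|f|_{\s,\mu,\ga,\vark}$, and summing the finitely many contributions gives the claim.

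The main obstacle, and the only place requiring genuine care rather than bookkeeping, is the $d^2_wf(0,\theta,0)$ and the $\theta w$/$w\theta$ cross blocks: one must verify that evaluating at $r=w=0$ and then re-differentiating (in $\theta$ via a Cauchy estimate, and in $w$) keeps the second differential inside the algebra $\cM^b_{\ga',\vark}$, uniformly in $\ga'\in[\ga_*,\ga]$, rather than merely inside $\cB(Y_{\ga'};Y_{\ga'})$. This is where one uses that $\cM^b_{\ga',\vark}$ is a \emph{closed} ideal in the Banach algebra $\cM^b_{\ga',0}\subset\cB(Y_{\ga'};Y_{\ga'})$ (so Cauchy integrals of $\cM^b_{\ga',\vark}$-valued holomorphic maps stay in $\cM^b_{\ga',\vark}$ with the same bound), together with the tensor identifications $R_0,R_1,R_2$ in the Remark, which show that a $Y_{\ga'}$-valued coefficient multiplied against a $Y_{\ga'}$-argument produces an operator with the required matrix decay. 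Once these two observations are in place, the rest is a finite sum of Cauchy estimates on $\O_{\ga'}(\s/2,\mu/2)$ — but the statement is on $\O_{\ga_*}(\s,\mu)$ via the norm \eqref{norm}, which only looks at sup over $x$ in the full domain for the zeroth component and sup over the nested domains for the differentials, so the Cauchy loss from $\s$ to a slightly smaller radius is harmless and absorbed into the absolute constant $C$.
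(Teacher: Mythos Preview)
Your approach has a genuine gap in the handling of the $\theta$-derivatives. You repeatedly invoke ``a Cauchy estimate in $\theta$ on the smaller strip'' and then assert that the loss ``$\Cte/\s\le\Cte$ after our conventions'' is harmless. But $\s>0$ is an arbitrary parameter of the function space $\cT_{\ga,\vark,\D}(\s,\mu)$; nothing in the paper bounds it below by an absolute constant, and the norm \eqref{norm} is taken over $\O_{\ga'}(\s,\mu)$ with the \emph{same} $\s,\mu$ in all three components --- the nesting is only in $\ga'$. So a Cauchy estimate in $\theta$ costs a factor $1/(\s-\s')$ that cannot be absorbed into an absolute $C$. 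This affects both your treatment of $Jdf^T$ (the ``$Jd_\theta$ applied to the four coefficient functions'' term) and of $Jd^2f^T$ (the $\theta\theta$- and $\theta w$-blocks).

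The paper's proof avoids $\theta$-Cauchy estimates entirely by commuting derivatives. Take for instance the quadratic piece $g(x)=\tfrac12 d^2_wf(p(x))[w,w]$ with $p(x)=(0,\theta,0)$. Its second differential expands by the chain rule as
\[
Jd^2g(x)=\big(Jd^2d^2_wf(p(x))[w,w]\big)[dp\cdot,dp\cdot]+2\big(Jdd^2_wf(p(x))[w]\big)[\cdot,dp\cdot]+2Jd^2_wf(p(x))[\cdot,\cdot].
\]
The first term looks like two $\theta$-derivatives of $d^2_wf$, but by symmetry of mixed partials $Jd^2d^2_wf=Jd^2_wd^2f$: it is the second $w$-derivative of the $\cM^b_{\ga',\vark}$-valued function $Jd^2f$. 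Now Cauchy in $w$, taken at the \emph{centre} $w=0$ of the $\mu$-ball, gives $\|d^2_w(Jd^2f)(p(x))\|\le\Cte\mu^{-2}\sup\|Jd^2f\|_{\ga',\vark}$, and the factor $\mu^{-2}$ cancels exactly against $\|w\|_{\ga'}^2<\mu^2$. The same trick, with $Jdd^2_wf=Jd^2_wdf$ (second $w$-derivative of $Jdf$), handles the first-differential estimate. No $\s$-loss ever appears because all Cauchy estimates are in $w$ (or $r$) from the centre of the respective ball, with the resulting inverse powers of $\mu$ compensated by the explicit polynomial factors $[w,w]$, $[w]$, $[r]$. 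Your block-by-block decomposition can be made to work, but only after inserting this commutation step; as written, the argument does not yield an absolute constant.
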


\begin{proof} The first part follows by general arguments. Look for example on
$$g(x)=   d^2_wf\circ p(x)[ w,w],\quad x=(r,\theta,w),$$
where $p(x)$ is the projection onto $(0,\theta,0)$.
This function $g$  is real holomorphic   up to the boundary (rhb) on $\O_{\ga_*}(\s, \mu)$, 
being a composition of such functions.  Its sup-norm is obtained by a Cauchy estimate of $f$:
$$\aa{d^2_wf(p(x))}_{ \cB(Y_{\ga_*},Y_{\ga_*};\C)}\aa{w}^2_{\ga'}
\le\Cte\frac1{\mu^2} \sup_{\O_{\ga_*}(\s, \mu)}\ab{f(y)}\aa{w}_{\ga_*}^2\le 
\Cte \sup_{y\in \O_{\ga_*}(\s, \mu)}\ab{f(y)}.$$

Since $Jd g(x)[\cdot]$ equals
$$\big(J dd^2_w f\circ p(x)[w,w]\big)[dp[\cdot]]+ 2\big(Jd^2_w f\circ p(x)[ w]\big)[\cdot], $$
and
$$Jd^2_w f:\O_{\ga'}(\s, \mu)\to \cB(Y_{\ga'};Y_{\ga'})$$
and
$$Jd d^2_w  f=J d^2_w  df:\O_{\ga'}(\s, \mu)\to \cB(Y_{\ga'},Y_{\ga'};Y_{\ga'})$$
are rhb, it follows that $d g$ verifies R1 and is rhb.  The norm $\aa{Jd g(x)}_{\ga'}$ is less than
$$\aa{J   d^2_w d f(p(x))}_{ \cB(Y_{\ga'},Y_{\ga'};Y_{\ga'})}\aa{w}^2_{\ga'}
+2\aa{Jd^2_wf(p(x))}_{ \cB(Y_{\ga'};Y_{\ga'})}\aa{w}_{\ga'},$$
which is 
$\le \Cte \sup_{y\in \O_{\ga'}(\s, \mu)}\aa{Jd f(y)}_{\ga'}$ --  this follows by  Cauchy estimates of 
derivatives of $Jd f$.

Since $Jd^2 g(x)[\cdot,\cdot]$ equals
$$\big(J d^2 d_w^2f\circ p(x)[w,w]\big)[dp[\cdot],dp[\cdot]]+ 
2J\big(  d d^2_w f\circ p(x)[w]\big)[\cdot,dp[\cdot]+ 2Jd_w^2 f\circ p(x)[\cdot,\cdot],$$
and
$$Jd^2_w f:\O_{\ga'}(\s, \mu)\to \cM_{\ga',\vark}^b,$$
$$J d d^2_w f=J d^2_w df:\O_{\ga'}(\s, \mu)\to \cB(Y_{\ga'};\cM_{\ga',\vark}^b)$$
and
$$J d^2 d_w^2f=J d_w^2 d^2f :\O_{\ga'}(\s, \mu)\to \cB(Y_{\ga'},Y_{\ga'};\cM_{\ga',\vark}^b)$$
are rhb,  it follows that $Jd g^2$ verifies R2   and is rhb. The norm $\aa{Jd^2 g}_{\ga',\vark}$ is less than
$$\aa{J d_w^2 d^2f(p(x))}_{\cB(Y_{\ga'},Y_{\ga'};\cM_{\ga',\vark}^b)}\aa{w}^2_{\ga'}
+2\aa{J d_w d^2f(p(x))}_{\cB(Y_{\ga'};\cM_{\ga',\vark}^b)}\aa{w}_{\ga'}+$$
$$
+2\aa{Jd^2f(x)}_{\ga',\vark},$$
which is 
$\le \Cte \sup_{y\in \O_{\ga'}(\s, \mu)}\aa{Jd^2 f(y)}_{\ga',\vark}$ --  this follows by a Cauchy estimate of $Jd^2 f$.

The derivatives with respect to $\r$ are treated alike. 
\end{proof}

\subsection{Flows}

\subsubsection{ Poisson brackets.}  \label{ss5.2}
The Poisson bracket $\{f,g\}$  of two $\cC^1$-functions $f$ and $g$
is (formally) defined by
$$\Om(Jdf,Jdg)=df[Jdg]=-dg[Jdf]$$
If one of the two functions verify condition R1, this product is well-defined.
Moreover, if both $f$a nd $g$ are jet-function, then $\{f,g\}$ is also a jet-function.

\begin{proposition}\label{lemma:poisson}
Let $f,g\in  \Tc_{\ga,\vark,\D}(\s,\mu)$, and let $\s'<\s$ and $\mu'<\mu\le1$. Then
\begin{itemize}
\item[(i)] $\{g,f\}\in  \Tc_{\ga,\vark,\D}(\s,\mu)$ and
$$
\ab{\{g,f\}}_{\begin{subarray}{c}\s',\mu' \  \\ \ga, \vark, \D \end{subarray}}\leq 
C_{\s-\s'}^{\mu-\mu'}\ab{g}_{\begin{subarray}{c}\s,\mu\ \  \\ \ga, \vark, \D \end{subarray}}
\ab{f}_{\begin{subarray}{c}\s,\mu \ \ \\ \ga, \vark, \D \end{subarray}}$$
for 
$$C_{\s-\s'}^{\mu-\mu'}=C \big(\frac1{(\sigma-\sigma')}  +   \frac1{ (\mu-\mu') }\big).$$

\item[(ii)] the n-fold Poisson bracket
$ P_g^n f \in  \Tc_{\ga,\vark,\D}(\s,\mu)$ and
$$\ab{ P_g^n f }_{\begin{subarray}{c}\s',\mu' \ \\ \ga, \vark, \D \end{subarray}}\leq
\big(C_{\s-\s'}^{\mu-\mu'}\ab{g}_{\begin{subarray}{c}\s,\mu \ \ \\ \ga, 0, \D \end{subarray}}\big)^n
\ab{f}_{\begin{subarray}{c}\s,\mu \ \ \\ \ga, 0, \D \end{subarray}}$$
where $P_g f=\{g,f\}$.
\end{itemize}
$C$ is an absolute constant.

\end{proposition}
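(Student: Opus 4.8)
The plan is to reduce everything to estimates on the Hamiltonian vector fields and to Cauchy estimates, exploiting that the three ``components'' of the norm $|\cdot|_{\s,\mu;\ga,\vark}$ control $f$, $Jdf$ and $Jd^2f$ respectively. Recall $\{g,f\} = df[Jdg]$. The key structural observation is that $Jdg$ is a vector field with values in $Y_{\ga'}$ (by $R_1$, for each $\ga_*\le\ga'\le\ga$), so that the composition $df(x)[Jdg(x)]$ makes sense and inherits holomorphy up to the boundary; moreover differentiating the identity $\{g,f\} = df[Jdg]$ once and twice in $x$ produces terms of the schematic form $d^2f[\cdot,Jdg]$, $df[Jd\,dg] = df[d(Jdg)]$, and then $d^3f[\cdot,\cdot,Jdg]$, $d^2f[\cdot,d(Jdg)]$, $df[d^2(Jdg)]$. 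Each factor here is controlled: $d^2f$ and $Jd^2g$ lie in $\cM^b_{\ga',\vark}$, and by Proposition \ref{pMatrixProduct} the algebra $\cM^b_{\ga',0}$ is Banach with $\cM^b_{\ga',\vark}$ a closed ideal, so products like $(Jd^2f)(Jd^2g)\in\cM^b_{\ga',\vark}$ with the expected norm bound. The higher differentials $d^3f$, $d^3g$ that appear are not part of the norm but are recovered by Cauchy estimates on the slightly smaller domain $\O_{\ga'}(\s',\mu')$, and this is exactly where the loss factor $C\big(\tfrac1{\s-\s'}+\tfrac1{\mu-\mu'}\big)$ enters: shrinking $\Im\theta$ by $\s-\s'$ and $(r,w)$ by $\mu-\mu'$ gives, by the Cauchy inequality for holomorphic functions on Banach domains, a gain of one derivative at the cost of $\tfrac1{\s-\s'}+\tfrac1{\mu-\mu'}$.

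For part (i), I would proceed in order: (a) verify $\{g,f\}$ is rhb on $\O_{\ga_*}(\s',\mu')$ as a composition of rhb maps — $x\mapsto (Jdg(x), df(x))$ followed by evaluation — using $R_1$ for $g$ to land $Jdg$ in $Y_{\ga'}$; (b) check $R_1$ for $\{g,f\}$, i.e. $Jd\{g,f\}:\O_{\ga'}(\s',\mu')\to Y_{\ga'}$ is rhb, by writing $d\{g,f\} = d^2f[\cdot,Jdg] + df[d(Jdg)]$ and noting $Jd^2f\in\cB(Y_{\ga'};Y_{\ga'})$ (from $R_2$) acting on $Jdg\in Y_{\ga'}$, while the transposed term is handled by duality; (c) check $R_2$ for $\{g,f\}$, i.e. $Jd^2\{g,f\}:\O_{\ga'}(\s',\mu')\to\cM^b_{\ga',\vark}$, by expanding $d^2\{g,f\}$ into the three schematic terms above, using that $\cM^b_{\ga',\vark}$ is an ideal in $\cM^b_{\ga',0}$ to absorb the product $Jd^2f\cdot Jd^2g$, and Cauchy-estimating the single $d^3$ term. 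Collecting the bounds from the definition \eqref{norm} of the norm, with one Cauchy loss, gives the constant $C^{\mu-\mu'}_{\s-\s'}$. The parameter derivatives $\p_\r^j$, $|j|\le s_*$, are handled by the Leibniz rule applied to $\{g,f\} = df[Jdg]$; since $\cT_{\ga,\vark,\D}$ is closed under $\p_\r$ up to order $s_*$ and the bracket is bilinear, one gets a finite sum of products $\p_\r^{j_1}(\cdot)\,\p_\r^{j_2}(\cdot)$, each estimated as above, with the same loss factor.

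For part (ii) I would argue by induction on $n$, applying part (i) repeatedly but dividing the domain loss into $n$ equal steps: choose intermediate radii $\s = \s_0 > \s_1 > \dots > \s_n = \s'$ with $\s_{k-1}-\s_k = (\s-\s')/n$, and similarly for $\mu$, so that $P_g^n f = \{g,\{g,\dots\{g,f\}\}\}$ satisfies $|P_g^n f|_{\s_k,\mu_k} \le C^{\mu_{k-1}-\mu_k}_{\s_{k-1}-\s_k}\,|g|_{\s_{k-1},\mu_{k-1};\ga,0}\,|P_g^{n-1}f|_{\s_{k-1},\mu_{k-1}}$. Here one must be slightly careful: in each bracket only the $\vark=0$ norm of $g$ is needed as a multiplier (since $\{g,\cdot\}$ is a first-order operator in $g$, and $\cM^b_{\ga',0}$ is the full algebra while $\cM^b_{\ga',\vark}$ is merely an ideal — so $f$ can keep its $\vark$-decay but $g$ need only be in $\cM^b_{\ga',0}$), which is why the final bound carries $|g|_{\ga,0,\D}$ and $|f|_{\ga,0,\D}$. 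Iterating, $C^{\mu_{k-1}-\mu_k}_{\s_{k-1}-\s_k} = C\big(\tfrac{n}{\s-\s'}+\tfrac{n}{\mu-\mu'}\big) \le nC^{\mu-\mu'}_{\s-\s'}$ (absorbing the extra $n$ into $\Cte$ via $n\le 2^n$), and the product over $k=1,\dots,n$ yields exactly $\big(C^{\mu-\mu'}_{\s-\s'}|g|_{\ga,0,\D}\big)^n |f|_{\ga,0,\D}$.

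The main obstacle is the bookkeeping in step (c): one must correctly track which differentiated factor lands in which space — $d^2f\in\cM^b_{\ga',\vark}$ versus $d^3f$ only available via Cauchy estimate, and $Jdg\in Y_{\ga'}$ versus $Jd^2g\in\cM^b_{\ga',0}$ — and then invoke the ideal property of $\cM^b_{\ga',\vark}$ in $\cM^b_{\ga',0}$ (Proposition \ref{pMatrixProduct}) so that every product term genuinely lands in $\cM^b_{\ga',\vark}$ with the right norm. Once the algebraic expansion of $d^2\{g,f\}$ is written out carefully and matched term-by-term against the three constituents of the norm \eqref{norm}, the estimates themselves are routine applications of the Banach-algebra bound and the Cauchy inequality, and the factor $\tfrac1{\s-\s'}+\tfrac1{\mu-\mu'}$ appears from exactly one derivative loss per bracket.
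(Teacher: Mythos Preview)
Your treatment of part (i) is essentially the paper's own: expand $d\{g,f\}$ and $d^2\{g,f\}$, place the product $Jd^2g\cdot Jd^2f$ in $\cM^b_{\ga',\vark}$ via the ideal property (Proposition~\ref{pMatrixProduct}), and handle the single third-derivative term by a Cauchy estimate on the shrunken domain, which produces exactly the factor $C\big(\tfrac1{\s-\s'}+\tfrac1{\mu-\mu'}\big)$. The $\rho$-derivatives go through by Leibniz as you say.

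Part (ii), however, has a genuine gap. Your inductive scheme with equal steps $\s_{k-1}-\s_k=(\s-\s')/n$ gives at each step the loss constant
\[
C\Big(\frac{n}{\s-\s'}+\frac{n}{\mu-\mu'}\Big)=n\,C^{\mu-\mu'}_{\s-\s'},
\]
and after $n$ iterations the product is $\big(n\,C^{\mu-\mu'}_{\s-\s'}\,|g|\big)^n$, i.e.\ an extra factor $n^n$ in front of the claimed bound. Your proposed fix, ``absorbing the extra $n$ into $\Cte$ via $n\le 2^n$'', does not work: $n^n$ is not bounded by $(C')^n$ for any absolute constant $C'$, and the inequality $n\le 2^n$ only yields $n^n\le 2^{n^2}$, which is far worse. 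In fact the minimisation of $\prod_k(1/\delta_k)$ subject to $\sum_k\delta_k=\s-\s'$ is attained exactly at equal steps, so no redistribution of the shrinking can remove the $n^n$; the obstruction is intrinsic to the strategy of iterating~(i).

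The paper is explicit on this point: it says that membership $P_g^n f\in\cT_{\ga,\vark,\D}$ follows from (i), but that \emph{the estimate does not}, and must instead be obtained by ``Cauchy estimates of the $n$-fold product $P_g^n f$'' directly, i.e.\ by bounding the three components of the norm of $P_g^n f$ in one stroke rather than peeling off one bracket at a time. You should replace your induction by such a direct argument (or, if an $n!$ is acceptable for your purposes---as it is in the only application, Proposition~\ref{Summarize}(ii), where it is divided out by $1/n!$---state and prove the weaker bound with the $n!$ present).
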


\begin{proof} (i)
We must first consider the function $h=\Om(Jdg,Jdf)$ on $\O_{\ga_*}(\s, \mu)$
Since $J dg,\ J df:\O_{\ga_*}(\s, \mu)\to Y_{\ga_*}$ are real holomorphic  up to the boundary (rhb), it follows that
$h:\O_{\ga_*}(\s, \mu)\to \C$ is rhb, and 
$$\ab{h(x)}\le \aa{J dg (x)}_{\ga_*}\aa{J df(x)}_{\ga_*}.$$

The vector $J d h(x)$ is a sum of
$$J \Om(Jd^2g(x),Jdf(x))=Jd^2g(x)[Jdf(x)]$$
and another term with $g$ and $f$ interchanged.
Since $Jd^2g:\O_{\ga'}(\s, \mu)\to \cB(Y_{\ga'};Y_{\ga'})$ and 
$Jdg,\ Jdf:\O_{\ga'}(\s, \mu)\to Y_{\ga'}$ are rhb, it follows that $Jd h$
verifies R1 and is rhb. Moreover 
$$\aa{Jd^2g(x)[Jdf(x),\cdot]}_{\ga'}\le
\aa{J d^2g (x)}_{\cB(Y_{\ga'},Y_{\ga'}) }\aa{J df(x)}_{\ga'}$$
and, by definition of $ \cM_{\ga,\vark}^b$ ,
$$\aa{J d^2g (x)}_{\cB(Y_{\ga'},Y_{\ga'}) }\le \aa{J d^2g (x)}_{\ga',0}.$$

The operator $Jd^2h(x)=d(Jdh) (x)$ is a sum of 
$$Jd^3g(x)[Jdf(x)]$$
and
$$Jd^2g(x)[Jd^2f(x)]$$
and two  other terms with $g$ and $f$ interchanged.

Since $Jd^3g:\O_{\ga'}(\s, \mu)\to \cB(Y_{\ga'};\cM_{\ga',\vark}^b)$ and 
$Jdf:\O_{\ga'}(\s, \mu)\to Y_{\ga'}$ are rhb, it follows that the first function
$\O_{\ga'}(\s, \mu)\to \cB(Y_{\ga'};\cM_{\ga',\vark}^b)$ is rhb. It can be estimated
on a smaller domain using a Cauchy estimate for $Jd^3g(x)$

The second term is treated  differently. Since
$$Jd^2f,\ Jd^2g:\O_{\ga'}(\s, \mu)\to \cM_{\ga,\vark}^b$$ 
are rhb, and since, by Proposition \ref{pMatrixProduct}, taking products
is a bounded bi-linear maps with norm $\le 1$, it follows that the second function
$\O_{\ga'}(\s, \mu)\to \cM_{\ga',\vark}^b$ is rhb and
$$\aa{Jd^2g(x)[Jd^2f(x)]}_{\ga',\vark}\le  \aa{Jd^2g(x)}_{\ga',\vark}\aa{Jd^2f(x)}_{\ga',\vark}.$$

The derivatives with respect to $\r$ are treated alike.

(ii) That $ P_g^n f \in  \Tc_{\ga,\vark,\D}(\s,\mu)$ follows from (ii), but the estimate does not follow from
the estimate in (ii). The estimate follows instead from Cauchy estimates of  $n$-fold product $P_g^n f $.
\end{proof}

\begin{remark}
The proof shows that the assumptions can be relaxed when $g$ is a jet function: it suffices then to assume that
 $g\in  \Tc_{\ga,0,\D}(\s,\mu)$ and
$g-\hat g(\cdot,0,\cdot)
\footnote{\ $\hat g(\cdot,0,\cdot)$ this is the $0$:th Fourier coefficent of the function $\theta\mapsto g(\cdot,\theta,\cdot)$}
\in  \Tc_{\ga,\vark,\D}(\s,\mu)$.
Then $\{g,f\}$ will still be in $\Tc_{\ga,\vark,\D}(\s,\mu)$ but with the bound
$$
\ab{\{g,f\}}_{\begin{subarray}{c}\s',\mu' \  \\  \ga, \vark, \D \end{subarray}}\leq 
C_{\s-\s'}^{\mu-\mu'} 
\big(\ab{g}_{\begin{subarray}{c}\s,\mu \ \ \\ \ga, 0, \D \end{subarray}}
+\ab{g-\hat g(\cdot,0,\cdot)}_{\begin{subarray}{c}\s,\mu \ \ \\ \ga, \vark, \D \end{subarray}}\big)
\ab{f}_{\begin{subarray}{c}\s,\mu \ \ \\ \ga, \vark, \D \end{subarray}}.$$

To see this it is enough to consider a jet-function $g$ which does not depend on $\theta$. The only difference
with respect to case  (i) is for the second differential. The second term is fine
since, by Proposition \ref{pMatrixProduct},  $\cM_{\ga',\vark}^b$ is a two-sided ideal in  $\cM_{\ga',0}^b$ and
$$\aa{Jd^2g(x)[Jd^2f(x)]}_{\ga',\vark}\le  \aa{Jd^2g(x)}_{\ga',0}\aa{Jd^2f(x)}_{\ga',\vark}.$$
For the first term we must consider $Jd^3g(x)[Jdf(x)]$ which, a priori, takes its values in $\cM_{\ga',0}^b$ and not in
$\cM_{\ga',\vark}^b$. But since $g$ is a jet-function independent of $\theta$ this term is $=0$.

\end{remark}

 \subsubsection {Hamiltonian flows} \label{ss5.3}
 
 The Hamiltonian vector field of a $\cC^1$-function $g$ on (some open set in) $Y_\ga$  is $-Jdg$. 
 Without further assumptions it is an element in $Y_{-\ga}$, but if $g\in\cT_{\ga,\vark}$, then it is an element
 in $Y_\ga$ and has a well-defined local flow $\Phi_g$.

\begin{proposition}\label{Summarize}
Let $g\in  \Tc_{\ga,\vark,\D}(\s,\mu)$, and let $\s'<\s$ and $\mu'<\mu\le1$.
If 
$$
\ab{g}_{\begin{subarray}{c}\s,\mu\ \ \\ \ga, \vark, \D  \end{subarray}}\leq\\
 \frac1{C}\min( \s-\s',\mu-\mu'),$$
 then
\begin{itemize}
\item[(i)]  the Hamiltonian flow map  $\Phi^t=\Phi^t_g$ is,
  for  all $\ab{t}\le1$ and all $\ga_*\le \ga'\le\ga$,   a $\cC^{{s_*}}$-map
$$\O_{\ga'}(\s',\mu')\times\D \to\O_{\ga'}(\s,\mu)$$
which is real holomorphic and symplectic for any fixed  $\rho\in \D$. 

Moreover,
$$\aa{ \p_\r^j (\Phi^t(x,\r)-x)}_{\ga'}\le 
C\ab{g}_{\begin{subarray}{c}\s,\mu\ \ \\ \ga, \vark, \D  \end{subarray}},$$
and
$$
\aa{ \p_\r^j(d\Phi^t(x)-I)}_{\ga',\vark}\le
C\ab{g}_{\begin{subarray}{c}\s,\mu\ \ \\ \ga, \vark, \D  \end{subarray}},$$
for any $x\in \O_{\ga'}(\s',\mu')$, $\ga_*\le \ga'\le\ga$,  and $0\le \ab{j}\le {s_*}$.

\item[(ii)] $f\circ \Phi_g^t\in \Tc_{\ga,\vark}(\s',\mu',\D)$ for $\ab{t}\le1$ and
$$
\ab{ f\circ \Phi_g^t }_{\begin{subarray}{c}\s',\mu' \ \ \\ \ga, \vark, \D \end{subarray}}\leq C
\ab{f}_{\begin{subarray}{c}\s,\mu \ \ \\ \ga, \vark, \D \end{subarray}}.$$
\end{itemize}

$C$ is an absolute constant.  
\end{proposition}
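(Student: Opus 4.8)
The plan is to obtain (i) from the theory of ordinary differential equations in Banach spaces, and (ii) from the Lie series $f\circ\Phi^t_g=\sum_{k\ge0}\frac{t^k}{k!}P_g^k f$ together with Proposition~\ref{lemma:poisson}.

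For (i), the definition \eqref{norm} of the norm says precisely that, for every $\ga_*\le\ga'\le\ga$ and $|j|\le{s_*}$, the maps $\p_\r^j(Jdg):\O_{\ga'}(\s,\mu)\to Y_{\ga'}$ and $\p_\r^j(Jd^2g):\O_{\ga'}(\s,\mu)\to\cM_{\ga',\vark}^b\subset\cB(Y_{\ga'};Y_{\ga'})$ are real holomorphic up to the boundary and bounded by $|g|_{\begin{subarray}{c}\s,\mu\\ \ga,\vark,\D\end{subarray}}$. In particular the Hamiltonian vector field $X_g=-Jdg$ is, for each fixed $\r$, Lipschitz on $\O_{\ga'}(\s,\mu)$ with both its Lipschitz constant and its supremum in the $Y_{\ga'}$-norm at most $|g|_{\begin{subarray}{c}\s,\mu\\ \ga,\vark,\D\end{subarray}}\le\frac1C\min(\s-\s',\mu-\mu')$, by the smallness hypothesis. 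So by Picard--Lindel\"of in the Banach manifold $\A^{\cA}\times\pi_{\L\setminus\cA}Y_{\ga'}$ the flow $\Phi^t=\Phi^t_g$ exists for $\ab t\le1$ on $\O_{\ga'}(\s',\mu')$ and, by the usual continuation argument together with the smallness hypothesis, maps $\O_{\ga'}(\s',\mu')$ into $\O_{\ga'}(\s,\mu)$, with displacement $\aa{\Phi^t(x)-x}_{\ga'}=\aa{\int_0^t X_g(\Phi^s(x))\,ds}_{\ga'}\le|g|_{\begin{subarray}{c}\s,\mu\\ \ga,\vark,\D\end{subarray}}$. This is the displacement estimate for $j=0$; for $|j|\le{s_*}$ one differentiates the flow equation $j$ times in $\r$ and runs a Gronwall argument, using that $X_g$ is $\cC^{s_*}$ in $\r$ with all $\r$-derivatives bounded by $|g|_{\begin{subarray}{c}\s,\mu\\ \ga,\vark,\D\end{subarray}}$ and that the resulting exponential Gronwall factor is $\le e^{C|g|}\le\Cte$. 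For $d\Phi^t$ one uses the variational equation $\frac d{dt}d\Phi^t=dX_g(\Phi^t)\circ d\Phi^t$, $d\Phi^0=I$, whence $d\Phi^t-I=\int_0^t dX_g(\Phi^s)\,ds+\int_0^t dX_g(\Phi^s)\circ(d\Phi^s-I)\,ds$. Since $dX_g(\Phi^s)=-Jd^2g(\Phi^s)$ lies in $\cM_{\ga',\vark}^b$ with norm $\le|g|_{\begin{subarray}{c}\s,\mu\\ \ga,\vark,\D\end{subarray}}$ and $\cM_{\ga',\vark}^b$ is a closed ideal in the Banach algebra $\cM_{\ga',0}^b$, the second integrand again lies in $\cM_{\ga',\vark}^b$ with norm controlled by the product of the factors' norms, so a Gronwall argument in $\cM_{\ga',\vark}^b$ gives $\aa{d\Phi^t-I}_{\ga',\vark}\le|g|\,e^{C|g|}\le C|g|$; differentiating in $\r$ and running a coupled Gronwall argument once more gives the bound on $\p_\r^j(d\Phi^t-I)$. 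That $\Phi^t$ is symplectic for fixed $\r$ is the usual identity $\frac d{dt}\big((d\Phi^t)^*J^{-1}d\Phi^t\big)=(d\Phi^t)^*\big((dX_g)^*J^{-1}+J^{-1}dX_g\big)d\Phi^t=0$, the middle factor vanishing because $dX_g=-Jd^2g$ is infinitesimally symplectic (a consequence of the symmetry of $d^2g$); real-holomorphy in the phase variables and $\cC^{s_*}$-dependence on $\r$ are inherited from those of $X_g$.

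For (ii), choose equally spaced intermediate radii $\s=\s_0>\s_1>\dots>\s_k=\s'$ and $\mu=\mu_0>\dots>\mu_k=\mu'$ and apply Proposition~\ref{lemma:poisson}(i) $k$ times, restricting at each step to the next smaller domain; this shows $P_g^k f\in\Tc_{\ga,\vark,\D}(\s,\mu)$ with
\[
|P_g^k f|_{\begin{subarray}{c}\s',\mu'\\ \ga,\vark,\D\end{subarray}}
\le\Big(\prod_{j=1}^{k}C_{\s_{j-1}-\s_j}^{\mu_{j-1}-\mu_j}\Big)\,
|g|_{\begin{subarray}{c}\s,\mu\\ \ga,\vark,\D\end{subarray}}^{k}\,
|f|_{\begin{subarray}{c}\s,\mu\\ \ga,\vark,\D\end{subarray}}
\le\Big(\frac{C\,k\,|g|_{\begin{subarray}{c}\s,\mu\\ \ga,\vark,\D\end{subarray}}}{\min(\s-\s',\mu-\mu')}\Big)^{k}
|f|_{\begin{subarray}{c}\s,\mu\\ \ga,\vark,\D\end{subarray}} .
\]
Using $k^k\le e^k k!$ and enlarging $C$ if necessary, the smallness hypothesis makes $\sum_{k\ge0}\frac{\ab t^k}{k!}|P_g^k f|_{\begin{subarray}{c}\s',\mu'\\ \ga,\vark,\D\end{subarray}}$ a convergent series with sum $\le C|f|_{\begin{subarray}{c}\s,\mu\\ \ga,\vark,\D\end{subarray}}$ for $\ab t\le1$; since $\Tc_{\ga,\vark,\D}(\s',\mu')$ is a Banach space, the partial sums converge there to a limit $F$ satisfying this bound. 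Finally $F=f\circ\Phi^t_g$: by the convergence just obtained one may differentiate the series termwise, so $\frac d{dt}F(t)=P_g F(t)=\{g,F(t)\}$; since $\{g,F\}=dF[X_g]$ this yields $\frac d{dt}\big(F(t)\circ\Phi^{-t}_g\big)=\big(\{g,F(t)\}-dF(t)[X_g]\big)\circ\Phi^{-t}_g=0$, hence $F(t)\circ\Phi^{-t}_g\equiv F(0)=f$, i.e. $F(t)=f\circ\Phi^t_g$, which therefore belongs to $\Tc_{\ga,\vark,\D}(\s',\mu')$ and satisfies the stated estimate.

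The step I expect to be the main obstacle is the bookkeeping in (ii): the telescoping of domains must be arranged so that the accumulated loss $\prod_{j}C_{\s_{j-1}-\s_j}^{\mu_{j-1}-\mu_j}\approx\big(C\,k/\min(\s-\s',\mu-\mu')\big)^{k}$ is defeated by the factor $1/k!$ of the Lie series -- this is exactly what forces the smallness condition to have the stated form -- while in part (i) one must simultaneously carry the $\cC^{s_*}$-dependence on $\r$ through the various Gronwall estimates by repeatedly differentiating the flow and variational equations. The remaining verifications are routine.
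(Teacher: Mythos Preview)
Your proof is correct and, for part (i), follows essentially the same route as the paper: existence and displacement via Picard--Lindel\"of on $Y_{\ga'}$, the variational equation for $d\Phi^t-I$ controlled in $\cM^b_{\ga',\vark}$ (the paper writes out the Dyson series explicitly rather than invoking Gronwall, but this is the same argument), symplecticity from the infinitesimal identity, and $\rho$-derivatives by differentiating the flow equation and applying Gronwall.

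For part (ii) there is a small but genuine difference worth noting. You estimate $P_g^k f$ by applying Proposition~\ref{lemma:poisson}(i) $k$ times on a telescoping sequence of domains, which produces the factor $k^k$ that you then absorb via $k^k\le e^k k!$. The paper instead appeals directly to Proposition~\ref{lemma:poisson}(ii), whose proof bypasses the telescoping loss by a single Cauchy estimate of the $k$-fold bracket and yields the cleaner bound $(C_{\s-\s'}^{\mu-\mu'}|g|)^k$ without the $k^k$. Your route is more elementary and self-contained (it uses only the already-proved part (i)); the paper's route is shorter but relies on the somewhat terse ``Cauchy estimates of the $n$-fold product'' in the proof of Proposition~\ref{lemma:poisson}(ii). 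Either way the smallness hypothesis on $|g|$ has exactly the form needed for the Lie series to converge, so both arguments deliver the same conclusion.
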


\begin{proof}
It follows by general arguments that  $\Phi=\Phi_g: U\to \O_{\ga}(\s,\mu)$
is real holomorphic in $(t,\zeta)\in U\subset \C\times \O_{\ga}(\s,\mu)$ and depends smoothly on
any smooth parameter in the vector field.
Clearly,  for $\ab{t}\le 1$ and $x\in \O_{\ga}(\s',\mu')$
$$\aa{\Phi^t(x,\r)-x}_{\ga}\le  \sup_{x\in \O_{\ga}(\s,\mu)}\aa{Jdg(x)}_\ga 
\le \ab{g}_{\begin{subarray}{c}\s,\mu\ \ \\ \ga, 0, \D  \end{subarray}}$$
as long as $\Phi^t(x)$ stays in the domain $ \O_{\ga}(\s,\mu)$. It follows by classical arguments that this is the case if
$$\ab{g}_{\begin{subarray}{c}\s,\mu\ \ \\ \ga, 0, \D  \end{subarray}}
\le\cte \min(\s-\s',\mu-\mu').$$

{\it The differential}. We have
$$\frac{d}{dt} d\Phi^t(x)=-Jd^2g(\Phi^t(x))d\Phi^t(x)=B(t)d\Phi^t(x),$$
where $B(t)\in \cM_{\ga,\vark}^b$.
By re-writing this equation in  the integral form 
 $d\Phi^t(x)=\Id+\int_0^t B(s) d\Phi^s(x)\dd s$ and iterating this relation, we get that 
$ d\Phi^t(x)-\Id= B^{\infty}(t)$  with
 $$B^{\infty}(t)
 =\sum_{k\geq 1}\int_{0}^{t}\int_{0}^{t_{1}}\cdots \int_{0}^{t_{k-1}} \prod_{j=1}^{k}B(t_{j})\text{d}t_{k}  \cdots \text{d}t_{2}\,\text{d}t_{1}.$$

We get, by Proposition \ref{pMatrixProduct}, that $d\Phi^t(x)-\Id\in\cM_{\ga,\vark}^b$ and, for $\ab{t}\le1$ ,
$$
\aa{ d\Phi^t(x)-\Id}_{\ga,\vark}\le \sum_{k\geq 1} \aa{Jd^2g(\Phi^t(x))}^k_{\ga,\vark}\frac{t^k}{k!}\le
\aa{Jd^2g(\Phi^t(x))}_{\ga,\vark}.$$

In particular, $A=d\Phi^t(x)$ is a bounded bijective operator
on $Y_\ga$.  Since $Jd^2g$ is a Hamiltonian vector field we clearly have that
$$\Om(A\zeta,A\zeta')=\Om(\zeta,\zeta'),\quad\forall  \zeta,\zeta'\in Y_\ga,$$
so $A$ is symplective.
 
 {\it Parameter dependence}. For $\ab{j}=1$, we have
$$\frac{d}{dt} Z(t)=\frac{d}{dt} \frac{\p^j\Phi^t(x,\r)}{\p\r^j}=
B(t,\r)Z(t) -\frac{\p^j Jdg(x,\r)}{\p\r^j}  =B(t)Z(t)+A(t).$$

Since
$$\aa{A(t)}_\ga+ \aa{B(t)}_{\ga,\vark}\le \Cte \ab{g}_{\begin{subarray}{c}\s,\mu\ \ \\ \ga, \vark, \D  \end{subarray}},$$
it  follows by classical arguments, using Gronwall, that
$$\aa{Z(t)}_{\ga,0}\le \Cte \ab{g}_{\begin{subarray}{c}\s,\mu\ \ \\ \ga, \vark, \D  \end{subarray}}\ab{t}.$$
The higher order derivatives (with respect tp $\rho$) of   $\Phi^t(x,\r )$,  and the derivatives of $d\Phi^t(x,\r )$  are treated in the same way.

The same argument applies to any $\ga_*\le\ga'\le\ga$.

Since
$$
 f\circ \Phi_g^t =
\sum_{n\ge0}\frac1{n!}t^nP^n_{-g}f  ,$$
(ii) is a consequence of Proposition \ref{lemma:poisson}(ii).
\end{proof}

\section{Normal Form Hamiltonians and the KAM theorem}

\subsection{Block decomposition, normal form matrices.}
In this subsection we recall two notions introduced in 
\cite{EK10} for the nonlinear Schr\"odinger equation. 
They are essential  to overcome the problems of small divisors 
in a multidimensional context. Since the  structure of the spectrum for the beam equation,
 $\{\sqrt{|a|^4+m},\ a\in \Z^{d_*}\}$, is similar to that  for the NLS equation,
 $\{|a|^2+\hat{V}_a,\ a\in \Z^{d_*}\}$, then to study the beam equation we will use 
 tools, similar to those used to study the NLS equation.
\medskip

\noindent{\bf Block decomposition:} 
 For any $\Delta\in\N\cup \{\infty\}$ 
we define an equivalence relation on $\Z^{d_*}$, generated by the pre-equivalence relation
$$ a\sim b \Longleftrightarrow \left\{\begin{array}{l} |a|=|b| \\   {[a-b]}
 \leq \Delta. \end{array}\right.$$
(see \eqref{pdist}). 
Let $[a]_\Delta$ denote the equivalence class of $a$  -- the {\it block} of $a$. 
For further references we note that 
\be\label{a-b} 
|a|=|b| \text{ and }   [a]_{\Delta}\neq [b]_{\Delta}
\Rightarrow [a-b]\geq \Delta
\ee
The crucial fact is that the blocks have a finite maximal ``diameter''
$$d_\Delta=\max_{[a]=[b]} [a-b]$$
which do not depend on $a$ but only on $\Delta$.

\begin{proposition}\label{blocks}
\be\label{block}
d_\Delta\leq C \Delta^{\frac{(d_*+1)!}2}. 
\ee
The constant $C$ only depends on $d_*$.
\end{proposition}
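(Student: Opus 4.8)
The plan is to bound the diameter of a block by a chain-of-length argument combined with a dimension reduction. Recall that $a \sim b$ within a block means there is a chain $a = a_0, a_1, \dots, a_k = b$ with $|a_i| = |a_{i+1}|$ and $[a_i - a_{i+1}] \le \Delta$ for each $i$. Since all points of the block lie on the sphere $\{x : |x| = |a|\}$ of $\Z^{d_*}$, and since $[a_i - a_{i+1}] = \mathrm{d}_{\Haus}(\{\pm a_i\}, \{\pm a_{i+1}\}) \le \Delta$, consecutive points (up to sign) are at Euclidean distance $\le \Delta$. The first observation is that there is no a priori bound on the length $k$ of such a chain, so a naive estimate $d_\Delta \le k\Delta$ is useless; the whole point is that the block diameter is controlled \emph{independently of $k$}, by the geometry of lattice points on a sphere.

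The key step is an induction on the dimension $d_*$. Fix a block $B = [a]_\Delta$ and let $R = |a|$. I would argue that the set $B$ is contained in a ball of controlled radius around $a$ as follows. Consider the coordinate hyperplanes: project $B$ onto the first coordinate. Two points $b, b'$ of $B$ with $|b_1 - b'_1|$ large must, since $|b| = |b'| = R$, be far apart in the remaining coordinates as well; quantitatively, one shows that within a single block the spread in any fixed coordinate direction is bounded. More precisely, I would slice $B$ into ``layers'' by fixing the value of one coordinate (there are at most $2R+1$ values, but we do not want to pay that) — instead, the efficient route is: if $b, b' \in B$ are consecutive in the chain, then $[b-b'] \le \Delta$, so either $|b - b'| \le \Delta$ or $|b + b'| \le \Delta$; in the second case $b' $ is within $\Delta$ of $-b$, and since $|-b| = |b| = R$ this just says $b'$ is close to the antipode. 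Tracking signs along the chain, one reduces to the case where all steps satisfy $|b_i - b_{i+1}| \le \Delta$ directly (at the cost of possibly replacing some $a_i$ by $-a_i$, which does not change membership questions up to the $\pm$ symmetry). Now all chain points lie on a sphere of radius $R$ and successive ones are Euclidean-distance $\le \Delta$; the claim is that such a connected ``$\Delta$-net walk'' on a lattice sphere has diameter bounded by $C\Delta^{(d_*+1)!/2}$.

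To get the exponent $(d_*+1)!/2$ I would induct: a chain on the sphere $S_R^{d_*-1} \cap \Z^{d_*}$ with step $\le \Delta$ can be decomposed according to the value of the last coordinate $b_{d_*}$, which varies over integers; grouping maximal sub-chains on which $b_{d_*}$ stays within an interval of length $\sim \Delta^{?}$, each sub-chain projects to a chain on a lower-dimensional lattice sphere (an intersection of $S_R^{d_*-1}$ with a hyperplane $\{x_{d_*} = \text{const}\}$, which is a sphere of radius $\le R$ in $\Z^{d_*-1}$) with step $\le C\Delta$, so by the induction hypothesis each sub-chain has bounded projected diameter; then one counts how the full coordinate $b_{d_*}$ can drift, using that each time it moves by one unit the other coordinates must readjust on a sphere, and the number of distinct hyperplane-slices visited is itself controlled by the lower-dimensional bound. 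Carefully bookkeeping these nested applications produces a tower of factorial type, giving the stated $\Delta^{(d_*+1)!/2}$; the base case $d_* = 1$ is trivial since a $0$- or $1$-sphere in $\Z$ has at most two points. The constant $C$ depends only on $d_*$ through the number of induction steps and the constants absorbed in ``$\le C\Delta$''.

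The main obstacle I anticipate is making the dimension-reduction step fully rigorous: one must verify that a maximal sub-chain on which the last coordinate is nearly constant genuinely lives (after the $\pm$-sign normalization) on a \emph{single} lower-dimensional lattice sphere and that the induced step size is still $O(\Delta)$, and then one must control the number and arrangement of the slices the chain passes through without double counting — this combinatorial slicing argument, and extracting from it precisely the exponent $(d_*+1)!/2$ rather than something worse, is where the real work lies. Everything else (the $\pm$-normalization, the reduction from $[a-b]$ to $|a-b|$, the base case) is routine.
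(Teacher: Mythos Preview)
Your sign-normalisation step is correct and is exactly what the paper does: once one observes that $a\sim b$ iff $a\approx b$ or $a\approx -b$ (where $\approx$ is the relation $|a|=|b|$, $|a-b|\le\Delta$), the block $[a]_\Delta$ equals $[a]^o_\Delta\cup(-[a]^o_\Delta)$, and the problem reduces to bounding the diameter $d_\Delta^o$ of the $\approx$-blocks. At this point the paper simply \emph{quotes} the bound $d_\Delta^o\le C\Delta^{(d_*+1)!/2}$ from \cite{EK10} and finishes with a two-line case split on whether $|a|\ge D_\Delta$ or not. So the paper's proof is a reduction plus a citation; you are attempting to supply the cited argument from scratch.

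Your inductive scheme, however, has a genuine gap at the step you yourself single out. Slicing by the last coordinate and asserting that ``the number of distinct hyperplane-slices visited is itself controlled by the lower-dimensional bound'' is not justified: a chain on the sphere can oscillate between slices arbitrarily many times, so the \emph{number} of slice-visits is not a meaningful quantity, and nothing you have written bounds the \emph{range} of $b_{d_*}$ along the chain in terms of the $(d_*-1)$-dimensional diameter. Moreover, when $b_{d_*}$ varies the projections no longer lie on a single lower-dimensional lattice sphere, so the induction hypothesis does not apply to the grouped sub-chains as stated. The argument in \cite{EK10} does proceed by a dimension reduction, but the reduction is organised differently: one fixes a nonzero step vector $v=a_1-a_0$ (so $|v|\le\Delta$), uses the identity $|a_i|^2=|a_{i-1}|^2$ to control the linear functional $x\mapsto x\cdot v$ along the chain (it changes by at most $\Delta^2$ per step and takes values in $\tfrac12\Z$), and then passes to the quotient by the line $\R v$; the factorial exponent arises because the induced ``step size'' in the reduced problem is of order $\Delta^2$ rather than $\Delta$, giving a recursion of the type $e_d\sim d\cdot e_{d-1}$. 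Your coordinate-hyperplane slicing does not produce this multiplicative blow-up of the effective $\Delta$, which is why the exponent $(d_*+1)!/2$ does not emerge from your bookkeeping.
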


\begin{proof} In \cite{EK10} it was considered the equivalence relation on $\Z^{d_*}$, generated by the 
pre-equivalence 
$$
a\approx b\quad\text{if}\quad |a|=|b|\quad \text{and}\quad |a-b|\le\Delta. 
$$
Denote by $[a]^o_\delta$ and $d^o_\Delta$ the corresponding equivalence class and its diameter (with respect to the 
usual distance). Since $a\sim b$ if and only if $a\approx b$ or $a\approx -b$, then 
\be\label{union}
[a]_\Delta = [a]^o_\Delta \cup -[a]^o_\Delta,
\ee 
provided that the union in the r.h.s. is disjoint. It is proved in \cite{EK10} that 
$d_\Delta^o\le D_\Delta=:C \Delta^{\frac{(d_*+1)!}2}$. Accordingly,
if $|a|\ge D_\Delta$, then the union above is disjoint, \eqref{union} holds and diameter of $[a]_\Delta$ satisfies 
\eqref{block}. If  $|a|< D_\Delta$, then $[a]_\Delta$ is contained in a sphere of radius $< D_\Delta$. So the block's
  diameter is at most 
$2D_\Delta$. This proves \eqref{block} if we replace there $C_{d_*}$ by $2C_{d_*}$.
\end{proof}

If $\Delta=\infty$ then the block of $a$ is the sphere $\{b: |b|=|a|\}$. 
Each block decomposition is a sub-decomposition of the trivial decomposition formed by the spheres $\{|a|=\text{const}\}$.

\medskip

\noindent
{\bf Normal form matrices.} 
Let $\E_\Delta$  be the decomposition of $\L=\F\sqcup\L_{\infty}$
into the subsets
$$[a]_\Delta=
\left\{\begin{array}{ll} [a]_\Delta\cap\L_{\infty} & a\in \L_{\infty}\subset\Z^{d_*}\\
\F & a\in \F.\end{array}\right.$$

\begin{remark}\label{remark-blocks}
Now the diameter of each block $[a]_\Delta$ is bounded by 
$$d_\Delta\leq C \Delta^{\frac{(d_*+1)!}2}$$
if moreover we let $C\ge \#\F$.
\end{remark}

On the space of $2\times 2$ complex matrices we introduce  a projection 
$$
\Pi: \Mat(2\times 2, \C)\to \C I+\C J,
$$
 orthogonal with respect to the Hilbert-Schmidt  scalar product. Note that 
$\C I+\C J$ is the space  of matrices, commuting with the symplectic matrix $J$. 
\begin{definition}\label{d_31}
We say that a matrix $A:\ \L\times \L\to \Mat(2\times 2, \C)$ is on normal form with respect to 
$\Delta$, $\Delta\in\N\cup \{\infty\}$,  and  write  $A\in  \NF_\Delta$, if
 \begin{itemize}
 \item[(i)] $A$ is real valued,
 \item[(ii)] $A$ is symmetric, i.e. $A_b^a\equiv {}^t\hspace{-0,1cm}A_a^b$,
 \item[(iii)] $A$ is block diagonal over $\E_\Delta$, i.e. $A_b^a=0$ if $[a]_\Delta\neq [b]_\Delta$,
 \item[(iv)] $A$ satisfies $\Pi A^a_b\equiv A^a_b$ for all $a,b\in\L_{\infty}$.

 \end{itemize}
 \end{definition}
 
Any quadratic form ${\mathbf q}(w)= \frac 1 2\langle w,Aw \rangle$, $w=(p,q)$, 
can be written as
$$
\frac 1 2\langle p,A_{11}p \rangle+\langle p,A_{12}q \rangle+\frac 1 2\langle q,A_{22}q \rangle 
+\frac 1 2\langle w_{\F}, H(\r) w_{\F}\rangle 
$$
where $A_{11},\ A_{22}$ and $H$ are real symmetric matrices and $A_{12}$ is a real matrix.

We now pass from the real variables $w_a=(p_a,q_a)$ to the 
complex variables $z_a=(\xi_a,\eta_a)$ by $w=U z$ defined through
\be\label{transf}
\xi_a=\frac 1 {\sqrt 2} (p_a+iq_a),\quad \eta_a =\frac 1 {\sqrt 2} (p_a-iq_a),\ee
for $a\in\L_\infty$, and acting like the identity on $  (\C^2)^\F$.
Then we have
$$
{\mathbf q}(Uz)=\frac 1 2\langle \xi,P\xi\rangle+ \frac 1 2\langle \eta,{\overline P}\eta\rangle+\langle \xi,Q\eta\rangle
+\frac 1 2\langle z_{\F}, H(\r) z_{\F}\rangle ,$$
where
$$P=\frac12\Big( (A_{11}-A_{22})-{\mathbf i}(A_{12}+{}^t A_{12})\big)$$
and
$$Q=\frac12\Big( (A_{11}+A_{22})+{\mathbf i}(A_{12}-{}^t A_{12})\big).$$
Hence $P$ is a complex symmetric matrix and $Q$ is a Hermitian matrix. If $A$ is on normal form, then $P=0$. 

Notice that this change of variables is not symplectic but
$$
U\left(\begin{array}{cc} J_\infty & 0\\ 0 & J_{\F}\end{array}\right) {}^t\! U=
\left(\begin{array}{cc} {\mathbf i}J_\infty & 0\\ 0 & J_{\F}\end{array}\right).$$

\subsection{The unperturbed Hamiltonian}\label{ssUnperturbed}

Let $h$ be a function as in \eqref{equation1.1}, i.e.
\be\label{unperturbed}
h (r,w,\r) =  \langle r,\om (\r)\rangle  + \frac12\langle w,A (\r)w \rangle,\ee
where
$$\langle w,A(\r)w\rangle =\langle w_{\F},H (\r)w_{\F} \rangle
+\frac12\big( \langle p_{\infty},Q (\r)p_{\infty} \rangle + \langle q_{\infty},Q (\r)q_{\infty} \rangle  \big)$$
and
$$Q (\r)=\diag\{\la(\r): a\in\L_\infty\}.$$
Assume $\om,\ \la,\ H$ verify \eqref{properties}. We shall denote by 
$$ \chi=
 |\nabla_\r \omega |_{\cC^{ {{s_*}}-1 } (\D)}+\sup_{a\in\L} |\nabla_\r \lambda_a|_{\cC^{ {{s_*}}-1 } (\D)}
 + ||\nabla_\r H ||_{\cC^{ {{s_*}}-1 } (\D) }$$
since this quantity will play an important role in our analysis.

\subsubsection{Assumption A1 -- spectral asymptotic}
\   

There exist  constants  $0< c,c'\le 1$  and exponents $\beta_1=2$, $\beta_2\ge0$, $\beta_3>0$  such that for all $\r\in\D$
the relations 
\eqref{laequiv}, \eqref{la-lb}, \eqref{la-lb-bis} and \eqref{la-lb-ter} hold.
\medskip



\subsubsection{Assumption  A2 -- transversality.}
Let  $[a]=[a]_\infty$ so that $[a]$ equals $\{b: \ab{b}=\ab{a}\}$ when $a\in\L_\infty$ and equals $\F$
when $a\in\F$ .  Denote by $Q_{[a]}$  the restriction of the matrix $Q$
to $[a]\times [a]$ and let $Q_{[\emptyset]}=0$. 
Let also $JH(\r)_{[\emptyset]}=0$ and $m=2\#\F$.

\medskip

There exists a  $1\ge\delta_0>0$ such that 
for all $\cC^{{s_*}}$-functions
$$\omega':\D\to \R^n,\quad |\omega'-\omega|_{\cC^{{s_*}}(\D)}<\delta_0,$$
the following hold for each  $k\in\Z^n\setminus 0$:

\begin{itemize}

\item[$(i)$] for any $a,b\in\L_\infty\cup \{\emptyset\} $ let
$$L(\r):X\mapsto \langle k,\om'(\r) \rangle X+Q_{[a]}(\r)X\pm XQ_{[b]};$$ 
then either $L(\r) $ is {\it $\de_0$-invertible} for all $ \r\in\D$  , i.e.
$$\aa{L(\r)^{-1}}\le \frac1{\delta_0}, \quad \forall \r\in\D,$$
or there exists a unit vector ${\mathfrak z}$ such that
$$\ab{\langle  v,\p_{\mathfrak z}  L(\r) v\rangle}  \ge \delta_0, \quad \forall \r\in\D
\footnote{\ $\p_{\mathfrak z} $ denotes here the directional derivative in the direction ${\mathfrak z}\in\R^p$}
$$
and for any unit-vector $v$ in the domain of $L(\r)$;

\item[$(ii)$]   let 
$$L(\r,\lambda):X\mapsto \langle k,\om'(\r) \rangle X+\lambda X+ {\mathbf i}XJH(\r)$$
and
$$P(\r,\lambda)=  \det L(\r,\lambda):$$
then  either $L(\r,\la(\r))$ is $\de_0$-invertible for all $ \r\in\D$  and $a\in[a]_\infty$, or
there exists a unit vector ${\mathfrak z}$ such that
$$
\ab{\p_{\mathfrak z}P(\r,\la(\r))+\p_\lambda P(\r,\la(\r))
\langle v,\p_{\mathfrak z} Q(\r) v\rangle}
\ge \delta_0\aa{L(\cdot,\la(\cdot))}_{\cC^{1}(\D)}^{m-1}$$
for all $\r\in \D$,  $a\in[a]_\infty$
and for any unit-vector $v\in (\C^2)^{[a]}$;

\item[$(iii)$] for any $a,b\in \F\cup \{\emptyset\} $
let
$$L(\r):X\mapsto \langle k,\om'(\r) \rangle X-{\mathbf i}JH(\r)_{[a]}X+ {\mathbf i}XJH(\r)_{[b]};$$ 
then  either $L(\r)$ is $\de_0$-invertible for all $ \r\in\D$, or
 there exists a unit vector ${\mathfrak z}$ and an integer  $1\le j\le {s_*}$ such that
$$\ab{ \p_{\mathfrak z}^j \det L(\r) }\ge \delta_0 \aa{L(\r)}_{\cC^{j}(\D)}^{m-1}, \quad \forall \r\in \D.$$
\end{itemize}

\medskip

 \begin{remark}\label{remro} The dichotomy 
in A2 is imposed not only on  $\omega$ but also on
$\Ca^1$-perturbations of $\omega'$, because, in general, the 
dichotomy for $\omega'$ does not imply that for perturbations.

If, however,  any $\Ca^{{s_*}}$ perturbation $\omega'$
of $\omega$ can be written as $\omega'=\omega\circ  f$
for some diffeomorphism $ f=id+\O(\delta_0)$ --   this is for example the case when $\omega(\r)=\r$,  -- 
then the dichotomy on $\omega$ implies a dichotomy on 
$\Ca^{{s_*}}$-perturbations.
\end{remark}

\subsection{Normal form Hamiltonians}
Consider now  the function \eqref{unperturbed} defined on the set $\D$. This function will
be fixed throughout this paper and we shall denote it and its ``components'' by 
$$h_{\textrm up},\ \om_{\textrm up},\ A_{\textrm up},\ Q_{\textrm up},\ H_{\textrm up}.$$							

 \begin{remark}\label{rConvention}
 The essential properties of  $h_{\textrm up}$ are given by  the constants 
 $$\chi,c',c, \beta=(\beta_1,\beta_2,\beta_3), \de_0.$$
 These will be fixed now, once and for all. All estimates will depend on $h_{\textrm up}$ only through these 
 constants. Since it will be important for our analysis of the Beam equation we shall track this dependence with respect to $c', \de_0,\chi$.
 In order to simplify the estimates a little we shall assume that  
 \be\label{Conv}0<c'\le \de_0\le\chi\le  c.\ee

\end{remark}

We shall consider functions of
the form
\be\label{normform}
h(r,w,\r)=\langle \om(\r), r\rangle +\frac 1 2\langle w, A(\r)w\rangle\ee  
which satisfies

\medskip

\noindent 
{\bf Hypothesis $\omega$:}
$\omega$ is of class $\Ca^{{s_*}}$ on $\D$ and
\be\label{hyp-omega}
|\omega-\omega_{\textrm up}|_{\Ca^{{s_*}}(\D)}\le \delta.
\ee

\medskip

\noindent
{\bf Hypothesis B: }   
$A-A_{\textrm up}:\D\to \cM_{0,\varkappa}^b$ is of class $\Ca^{{s_*}}$, 
$A(\r)$ is on normal form $\in \NF_{\Delta}$ for all $\r\in\D$
 and
  \be\label{hypoB}
 || \p_\r^j (A(\r)-A_{\textrm up}(\r))_{[a]} || \le  \delta\frac{1}{\langle a \rangle^\varkappa} \ee
for $ |j| \le {{s_*}}$, $a\in\L$  and $\r\in \D$.
\footnote{\ here it is important that $||\ ||$ is the matrix operator norm} We also require that
\be\label{varkappa}
\varkappa>0.\ee

\medskip

A function verifying these assumptions is said to be on
{\it normal form}. and we shall denote this by
$$h\in \NF_{\varkappa,h_{\textrm up} }(\Delta,\delta).$$
Since the unperturbed Hamiltonian $h_{\textrm up}$ will be fixed in this paper
we shall often suppress is in this notation writing simply  $h\in \NF_{\varkappa }(\Delta,\delta)$.

\subsection{The normal form  theorem}
In this section we state an abstract KAM result for perturbations of normal form Hamiltonians by a
function in $ \Tc_{\ga,\varkappa,\D }(\s,\mu)$, $0<\s,\mu\le 1$.
Let
$$\ga=(\ga_1,m_*)> \ga_*=(0,m_*)\quad\textrm{and}\quad  \vark> 0.$$

\begin{theorem}\label{main}
There exist positive constants $C$, $\alpha$, $\exp$  and  $\exp_3$ such that, for any
$h\in\NF_{\varkappa,h_{\textrm up}}(\Delta,\de)$ and for any 
$f\in \Tc_{\ga,\varkappa,\D }(\s,\mu)$,
$$ \eps=\ab{f^T}_{\begin{subarray}{c}\s,\mu\ \ \\ \ga, \varkappa,\D  \end{subarray}}\ \textrm{and}\  
 \xi=\ab{f}_{\begin{subarray}{c}\s,\mu\ \ \\ \ga, \varkappa,\D  \end{subarray}},$$
if
$$\delta \le \frac1{2C} c'$$
and
\be\label{epsi}
\eps(\log \frac1\eps)^{\exp}\le
\frac1{C}\big( \frac{ \max(\ga_1^{-1} ,d_{\Delta})}{\s\mu}\big)^{-\exp}
\big(\frac{c'}{\chi+\xi}\big)^{\exp_3} c',
\footnote{\ remember the convention \eqref{Conv} }
\ee
then there exist a set $\D'=\D'(h, f)\subset \D$,
$$\Leb (\D\setminus \D')\leq 
C\big(\log\frac1{\eps} \frac{ \max(\ga_1^{-1} ,d_{\Delta})}{\s\mu} \big)^{\exp}
( \frac{\chi+\xi}{\delta_0})^{1+\alpha}
(\frac{\eps}{\de_0})^{\alpha},$$
and a $\cC^{{s_*}}$ mapping
$$\Phi:\O_{ \ga_*}(\s/2,\mu/2)\times\D\to \O_{ \ga_*}(\s,\mu),$$
real holomorphic and symplectic for each parameter $\r\in\D$, such that 
$$(h+ f)\circ \Phi= h'+f'$$
and
\begin{itemize}
\item[(i)] for $\r\in\D'$ and $\zeta=r=0$
$$d_r f'=d_\theta f'=
d_{\zeta} f'=d^2_{\zeta} f'=0;$$
\item[(ii)] $ h'\in\NF_{\vark}(\infty,\de')$, $\de'\le \frac{c'}2$,  and
$$\ab{ h'- h}_{\begin{subarray}{c}\s/2,\mu/2\ \ \\ \ga_*, \varkappa,\D  \end{subarray}}\le C;$$

\item[(iii)] 
for any $x\in \O_{ \ga_*}(\s/2,\mu/2)$ and $\ab{j}\le{s_*}$
$$|| \p_\r^j (\Phi(x,\cdot)-x)||_{ \ga_*}+ \aa{ \p_\r^j (d\Phi(x,\cdot)-I)}_{ \ga_*,\vark} \le C;$$

\item[(iv)]  if  $\tilde \r=(0,\r_2,\dots,\r_p)$ and $f^T(\cdot,\tilde \r)=0$ for all $\tilde \r$,  then $h'=h$ and $\Phi(x,\cdot)=x$
 for all $\tilde \r$.
\end{itemize}

 $C$ is an absolut constant that only depends on  $\beta, \vark,c$ and $\sup_\D\ab{\om}$.
The exponent $\exp'$ is an absolute constant that only depends  on $\beta$ and $\vark$. 
The exponent $\exp_3$ only depends on $s_*$.
The exponent $\alpha'$ is a positive constant only depending on  $s_*,\frac{d_*}{\vark} ,\frac{d_*}{\beta_3} $.

\end{theorem}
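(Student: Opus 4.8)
The plan is to follow the classical KAM scheme: a Newton-type iteration where at each step one solves a homological equation to eliminate the ``bad'' part of the jet $f^T$ of the perturbation, while the rest is pushed to higher order via the flow machinery of Proposition~\ref{Summarize}. Concretely, given $h\in\NF_{\vark}(\Delta,\de)$ and a perturbation $f$, one writes $f^T = \langle f^T\rangle + \tilde f$, where $\langle f^T\rangle$ is the ``normal'' part (the $k=0$ Fourier mode in $\theta$, with the $w$-quadratic part projected onto $\NF_{\Delta'}$ for a suitable finer block decomposition $\Delta'$) and $\tilde f$ is what must be removed. One then seeks a generating Hamiltonian $g\in\Tc_{\ga,\vark,\D}(\s',\mu')$ such that $\{h,g\}+\tilde f$ is of higher order; since $\Phi_g^1$ conjugates $h+f$ to $h + \langle f^T\rangle + (\text{quadratic in }\eps) + (\text{non-jet remainder controlled by }\xi)$. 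The new normal form $h'=h+\langle f^T\rangle$ stays in $\NF_{\vark}(\Delta',\de+C\eps)$, and one iterates with geometrically shrinking $\s,\mu$ and $\Delta\to\infty$. Property (iv) — invariance on the subspace $r_1=0$ — is preserved at each step because the construction is equivariant under that symmetry.

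The key steps, in order: (1) \emph{Homological equation.} Decompose $g$ according to the monomial structure (the $r$-linear, $\theta$-dependent part; the $w$-linear part; the $w$-quadratic part) and reduce to scalar/block equations of the form $(\langle k,\om\rangle + \text{eigenvalue combination})\cdot(\text{coefficient}) = (\text{datum})$. For the $w$-quadratic part these are exactly the operator equations $L(\r)X = Y$ appearing in Assumption A2 (i)--(iii), and for the mixed $\F$/$\L_\infty$ interaction the equation of A2(ii). (2) \emph{Small divisor estimates and parameter excision.} Use the transversality Assumption~A2: on the set where $L(\r)$ is not $\de_0$-invertible, the relevant determinant has a derivative $\gsim\de_0\|L\|^{m-1}$ in some direction, so by a standard sublemma (Lojasiewicz-type / one-dimensional measure estimate along lines) the set of $\r$ where $\|L(\r)^{-1}\|$ is large has small measure; summing over $k\in\Z^n$ with the usual exponential/polynomial weights from the decay of $\hat f_k$ gives the stated bound on $\Leb(\D\setminus\D')$, with the factor $(\frac{\chi+\xi}{\de_0})^{1+\al}(\frac{\eps}{\de_0})^\al$. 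The $\cC^{s_*}$-dependence on $\r$ of the solution $g$ is controlled by differentiating the homological equation $s_*$ times and using that the derivatives of $L$ are bounded by $\chi$ (hence the appearance of $\chi$ and the need for $s_*$-smoothness, and the role of A2(iii) using derivatives up to order $s_*$). (3) \emph{Estimate of the transformed Hamiltonian} via Propositions~\ref{lemma:poisson} and \ref{Summarize}: $\{g,\tilde f\}$, the quadratic-in-$g$ terms, and the non-jet part $f-f^T$ all get the factors $C_{\s-\s'}^{\mu-\mu'}$, and the block-diagonal structure is preserved because $\{h,\cdot\}$ acting on $\NF_\Delta$ stays block-diagonal. (4) \emph{Iteration / convergence.} Run a combined scheme: finitely many ``linear'' steps to pass from $\Delta<\infty$ to $\Delta=\infty$ (each only improving $\de$ linearly, as flagged after the theorem statement), then a super-quadratic infinite iteration $\eps_{n+1}\approx \eps_n^{1+\text{something}}$ with $\s_n,\mu_n$ summably shrinking; the condition \eqref{epsi}, with its logarithmic correction $(\log\frac1\eps)^{\exp}$ absorbing the loss of analyticity and the growth of $d_{\Delta_n}$, guarantees the scheme closes and $\Phi=\lim \Phi_{g_1}^1\circ\cdots\circ\Phi_{g_n}^1$ converges in $\cC^{s_*}$ on $\O_{\ga_*}(\s/2,\mu/2)\times\D$.

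The main obstacle — as the authors themselves note — is step (1)--(2): solving the homological equation for the $w$-quadratic part with good enough estimates when the normal form is \emph{block-diagonal but not diagonal}. Inverting $L(\r): X\mapsto \langle k,\om\rangle X + Q_{[a]}X \pm XQ_{[b]}$ over each (possibly large, size $\sim d_\Delta$) block, while keeping the solution in the algebra $\cM_{\ga,\vark}^b$ with norm controlled by $\de_0^{-1}$ times the datum, and then tracking this through $s_*$ parameter-derivatives against the transversality dichotomy, is where all the real work lies; the finite-dimensional, possibly hyperbolic $\F$-component (Assumption A2(iii), with its $j$-th order derivatives) is the delicate new piece relative to \cite{EK10}. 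Everything else is bookkeeping: the function-space estimates are already packaged in Section~2, and the geometric/measure-theoretic excision argument is standard once the quantitative transversality is in hand.
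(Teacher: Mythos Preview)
Your outline captures the broad KAM architecture correctly, and your identification of the homological equations with the operators in Assumption~A2 is right. But there is a real gap in step~(1) that propagates into step~(4) and breaks convergence.

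You solve the \emph{linear} homological equation $\{h,g\}+\tilde f=0$ and then assert that $\Phi_g^1$ takes $h+f$ to $h+\langle f^T\rangle+(\text{quadratic in }\eps)+(\text{non-jet, size }\xi)$. This is false: the term $\{f-f^T,g\}$ has a nonzero jet. With the grading $\deg r=2$, $\deg w=1$, $\deg\theta=0$, Poisson bracket drops degree by~$2$; since $f-f^T$ starts in degree~$3$ and $g$ has pieces in degrees~$0,1,2$, the bracket $\{f-f^T,g\}$ has pieces in degrees~$1$ and~$2$, i.e.\ it contributes to the new jet. Its size is $\sim\xi\,|g|\sim\xi\eps/\ka$, so the new jet satisfies $\eps'\gtrsim\xi\eps/\ka$. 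In this theorem $\xi=|f|$ is \emph{not} assumed small relative to $\ka\le c'/C$ (indeed $\xi$ appears only through the factor $(c'/(\chi+\xi))^{\exp_3}$ in~\eqref{epsi}), so the ratio $\xi/\ka$ is typically $\gg1$ and your iteration diverges at the first step. This is precisely the ``singular perturbation'' aspect the paper is designed to handle.

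The paper's remedy is the \emph{nonlinear} homological equation $\{h,S\}+\{f-f^T,S\}^T+f^T=h_++R$ (Proposition~\ref{thm-Eq}), which absorbs the offending term into the equation itself. Because $\{f-f^T,S_2\}$ has degree $\ge3$ when $S_2$ has degree~$2$, this nonlinear equation is solved exactly by a three-step bootstrap $S=S_0+S_1+S_2$; the price is the factor $Y=((\chi+\de+\xi)/\ka)^{\exp_3}$ in all estimates, and the smallness condition $\eps\le\ka/(CXY)$ replaces your implicit (and unavailable) condition $\xi<\ka$. After this, the new jet in Lemma~\ref{basic} satisfies $|f_+^T|\lesssim\frac1\ka XY(|k|+\eps)\eps$ with no stray $\xi$-linear term, and the scheme closes.

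A smaller structural point: the finite and infinite iterations are not sequential (``finite steps to reach $\Delta=\infty$, then super-quadratic'') but nested. Each step~$j$ of the infinite iteration (Lemma~\ref{Birkhoff} applied once) consists of $K_j\sim K^j$ applications of the basic step with $\Delta_j$ fixed, driving $\eps_j\to\eps_j^{K_j}$ geometrically; only then is $\Delta_j$ increased to $\Delta_{j+1}$ and the analyticity width in $\ga$ spent. The normal form reaches $\NF_\vark(\infty,\cdot)$ only in the limit $j\to\infty$.
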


The condition on $\Phi$ and $h'-h$ may look bad but it is not. 

\begin{corollary}\label{cMain} Under the assumption of the theorem, let $\eps_*$ be the largest positive number such that \eqref{epsi}
holds. Then,
for any  $\r\in\D$ and $\ab{j}\le{s_*}-1$,
\begin{itemize}
\item[$(ii)'$] 
$$\ab{  \p_\r^j (h'(\cdot,\r)- h(\cdot,\r))}_{\begin{subarray}{c}\s/2,\mu/2\ \ \\ \ga_*, \varkappa,\ \ \  \ \end{subarray}}\le 
\frac{C}{\eps_*}\ab{f^T}_{\begin{subarray}{c}\s,\mu\ \ \\ \ga, \varkappa,\D  \end{subarray}};$$

\item[$(iii)'$] 
$$|| \p_\r^j (\Phi(x,\r)-x)||_{\ga_*}+ \aa{ \p_\r^j (d\Phi(x,r)-I)}_{\ga^*,\vark} \le \frac{C}{\eps_*}\ab{f^T}_{\begin{subarray}{c}\s,\mu\ \ \\ \ga, \varkappa,\D  \end{subarray}},$$
for any $x\in \O_{\ga_*}(\s/2,\mu/2)$.
\end{itemize}
The constant $C$ is an absolute constant that also depends on $\beta$. 
\end{corollary}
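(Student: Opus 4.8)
The plan is to derive Corollary \ref{cMain} from Theorem \ref{main} by a simple rescaling argument, exploiting the fact that the statements in Theorem \ref{main}(ii)--(iii) are, up to the absolute constant $C$, \emph{independent} of the size $\eps$ of the perturbation. First I would observe that the map $f\mapsto f^T$ commutes with scalar multiplication, so for any $\lambda\in(0,1]$ the function $\lambda f$ still lies in $\Tc_{\ga,\varkappa,\D}(\s,\mu)$ with jet-norm $\lambda\eps$ and full norm $\lambda\xi$. The idea is to choose $\lambda$ so that $\lambda\eps$ saturates the smallness condition \eqref{epsi}: since $\eps_*$ is defined as the largest number for which \eqref{epsi} holds with the given data $h_{\textrm up}$, $\s$, $\mu$, $\Delta$ and with $\xi$ replaced appropriately, and since decreasing $\xi$ only relaxes \eqref{epsi}, one can take $\lambda=\min(1,\eps_*/\eps)$. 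Then $\lambda f$ verifies the hypotheses of Theorem \ref{main}.

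Next I would apply Theorem \ref{main} to the pair $(h,\lambda f)$. This produces $\D'$, $\Phi_\lambda$, $h'_\lambda=h+\text{(correction)}$ and $f'_\lambda$ with
$$\ab{ \p_\r^j (h'_\lambda-h)}_{\begin{subarray}{c}\s/2,\mu/2\\ \ga_*,\varkappa\end{subarray}}\le C,\qquad
\|\p_\r^j(\Phi_\lambda(x,\cdot)-x)\|_{\ga_*}+\|\p_\r^j(d\Phi_\lambda(x,\cdot)-I)\|_{\ga_*,\vark}\le C,$$
for $\ab{j}\le s_*$ and hence in particular for $\ab{j}\le s_*-1$. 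The key point is now homogeneity in $\lambda$: the whole KAM scheme is driven by successive solutions of the homological equation whose right-hand side is (a piece of) the perturbation, so the coordinate change $\Phi_\lambda$ and the modification $h'_\lambda-h$ depend \emph{linearly} on the perturbation to leading order, and more precisely are bounded by $C\lambda^{-1}\cdot(\lambda\xi)=C\xi$ is the wrong scaling; rather, the correct statement is that replacing $f$ by $\lambda f$ multiplies these quantities by $\lambda$, i.e.
$$\Phi(x,\r)-x=\tfrac1\lambda\big(\Phi_\lambda(x,\r)-x\big),\qquad h'(\cdot,\r)-h(\cdot,\r)=\tfrac1\lambda\big(h'_\lambda(\cdot,\r)-h(\cdot,\r)\big),$$
so that running the theorem with $\lambda f$ and then dividing by $\lambda$ recovers the data for $f$ itself. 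Combining this with the bound $\le C$ from Theorem \ref{main} applied to $\lambda f$ gives the claimed estimates with constant $C/\lambda=C\max(1,\eps/\eps_*)\le C\eps/\eps_*\cdot(\text{something})$; since $\eps\le\eps_*$ forces $\lambda=1$ and otherwise $1/\lambda=\eps/\eps_*$, in all cases $1/\lambda\le 1+\eps/\eps_*\le C\eps_*^{-1}\ab{f^T}_{\ldots}/\ab{f^T}_{\ldots}$, which yields precisely the right-hand sides $\frac{C}{\eps_*}\ab{f^T}_{\begin{subarray}{c}\s,\mu\\ \ga,\varkappa,\D\end{subarray}}$ appearing in $(ii)'$ and $(iii)'$. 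One must be slightly careful that the constant $\alpha$ and exponents are unaffected, which they are, as they are absolute.

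The main obstacle I anticipate is justifying the exact homogeneity of $\Phi$ and $h'-h$ in the perturbation parameter $\lambda$: the KAM iteration is not strictly linear (it involves compositions of flows and Poisson brackets), so one does not literally have $\Phi_{\lambda f}-\id=\lambda(\Phi_f-\id)$. The honest route is not to claim exact homogeneity but to note that the bounds of Theorem \ref{main}(ii)--(iii) hold \emph{uniformly} over all perturbations satisfying \eqref{epsi}, and to use a scaling of the \emph{symplectic coordinates} instead: the dilation $(r,\theta,w)\mapsto(\eps_*\eps^{-1}r,\theta,(\eps_*/\eps)^{1/2}w)$ (or the analogous action-variable scaling noted after the definition of $\O_\ga(\s,\mu)$) conjugates the problem for $f$ on $\O_{\ga_*}(\s,\mu)$ to a problem with an $O(\eps_*)$-small perturbation, to which Theorem \ref{main} applies directly with constant $C$; transporting the resulting $\Phi$ and $h'$ back through this linear dilation multiplies the $\|\cdot\|_{\ga_*}$ and $\|\cdot\|_{\ga_*,\vark}$ norms by a factor $\le C\eps/\eps_*$, and the Cauchy/interpolation loss of one derivative in $\r$ is what accounts for the range $\ab{j}\le s_*-1$ rather than $\ab{j}\le s_*$. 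Writing $\ab{f^T}=\eps$ then converts $C\eps/\eps_*$ into $\frac{C}{\eps_*}\ab{f^T}_{\begin{subarray}{c}\s,\mu\\ \ga,\varkappa,\D\end{subarray}}$, completing the proof; the dependence of $C$ on $\beta$ enters only through the constants already present in Theorem \ref{main} and in the block bound \eqref{block}.
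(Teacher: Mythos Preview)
Your proposal contains a genuine gap. Neither the ``exact homogeneity'' route (which you correctly flag as unjustified) nor the symplectic dilation route actually works. For the dilation $(r,\theta,w)\mapsto(\lambda r,\theta,\sqrt\lambda\,w)$, the unperturbed Hamiltonian $h$ does \emph{not} stay fixed: the quadratic part $\tfrac12\langle w,Aw\rangle$ gets multiplied by $\lambda$, so you are no longer perturbing the same normal form, and the constants $c',\chi,\delta$ entering Assumptions A1--A2 and \eqref{epsi} all change. There is no clean conjugation that scales $f$ while leaving $h$ invariant.

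The paper's argument is different and avoids homogeneity entirely. One enlarges the parameter space: write the old parameter as $\tilde\rho$, set $\rho=(\eps,\tilde\rho)$ with $|\eps|\le1$, and apply Theorem~\ref{main} to the family $h+\eps f$ viewed as a perturbation problem parametrised by $\rho\in[-1,1]\times\D$. The hypotheses are still satisfied (the new perturbation $\eps f$ has jet-norm $\le\eps_*$), so the theorem yields $\Phi$ and $h'$ that are $\cC^{s_*}$ in the full parameter $\rho=(\eps,\tilde\rho)$, with the uniform bound $C$ from items (ii)--(iii). Now invoke item~(iv): at $\eps=0$ the perturbation has vanishing jet, so $\Phi=\id$ and $h'=h$ there. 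Since $\Phi(x,\eps,\tilde\rho)-x$ vanishes at $\eps=0$ and its $\eps$-derivative is bounded by $C$, the mean value theorem gives $\|\Phi(x,\eps,\tilde\rho)-x\|_{\ga_*}\le C|\eps|$; evaluating at $\eps=1$ and writing $\eps=\ab{f^T}/\eps_*$ (after rescaling) gives $(iii)'$, and likewise for $(ii)'$. The restriction $|j|\le s_*-1$ arises precisely because one derivative in the $\eps$-direction has been spent, leaving $s_*-1$ for $\tilde\rho$; it is not a Cauchy or interpolation loss.
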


\begin{proof} Let us denote $\r$ here by  $\tilde \r$.
If  $|f^T|_{\begin{subarray}{c}\s,\mu\ \ \\ \ga, \vark,\D  \end{subarray}}\le \eps_*$, then we can apply the theorem to $\eps f$ for any $|\eps|\le 1$. Let now
$\r=(\eps,\tilde \r)$ and consider $h$ and $h_{\mathrm up}$ as functions depending on 
this new parameter $\r$ --  they will still verify the assumptions of the theorem, which will provide us with a mapping $\Phi$ with a $\cC^{{s_*}}$ dependence in 
$\r=(\eps,\tilde \r)$ and equal to the identity when $\eps=0$. The bound on the derivative now implies that
$$|| \Phi(x,\eps,\tilde\r)-x  ||_{\ga_*}\le C\eps\le
\frac C{\eps_*}|f^T|_{\begin{subarray}{c}\s,\mu\ \ \\ \ga, \vark,\D  \end{subarray}}
$$
for any $x\in \O_{\ga_*}(\s/2,\mu/2)$. The same estimate holds for all derivatives with respect to 
$\tilde \r$ up to order ${{s_*}}-1$.

The argument for $h'-h$ is the same.
\end{proof}

We  can take $\de=0$ here, in which case $h$ equals the unperturbed Hamiltonian $h_{\textrm up}$  -  this is the case described in theorem of the Introduction.

\begin{remark} 
The assumption \eqref{epsi} on $\epsilon$  involves many constants and parameters. 
Then  \eqref{epsi} takes the form
$$\eps(\log \frac1\eps)^{\exp}\le C'\big(\frac{c'}{\chi+\xi}\big)^{\exp_3} c'$$
where $C'$ depends on $C\ga,\s,\mu,\Delta$.
If we assume that
$$\xi, \chi =O(\delta_0^{1-\aleph}) \quad \text{and} \quad c'=O(\delta_0^{1+\aleph})$$
for some $\aleph>0$, then assumption \eqref{epsi} reduces to 
$$\eps(\log \frac1\eps)^{\exp}\lsim C'
\delta_0^{1+\aleph+2 \aleph \exp_3},$$
which implies
$$\eps_*\gsim C'
\delta_0^{1+2\aleph+2 \aleph \exp_3}$$
when $\de_0$ is sufficiently small.

Actually in paper \cite{EGK} Theorem \ref{main} this is used in this context.
\end{remark}

\section{Small Divisors}\label{s6}

For a mapping $L:\D\to gl(\dim,\R)$ define, for any $\ka>0$,
$$\Sigma(L,\kappa)=\{\r\in \D:   ||L^{-1}| |>\frac1\ka\}.$$
Let  
$$h(r,w,\r)=\langle r,\om(\r)\rangle+ \frac12\langle w,A(\r) w \rangle$$
be a  normal form Hamiltonian in $\NF_{\varkappa}(\Delta,\delta)$.
Recall the convention in Remark \ref{rConvention} and assume $\vark>0$ and
\be\label{ass} \delta \le \frac{1}{C}c' ,\ee
where $C$ is to be determined.

\begin{lemma}\label{lSmallDiv1}
Let
$$L_{k}=\langle k,\om(\r)\rangle.$$
There exists  a constant $C$ such that if \eqref{ass} holds, then
$$\Leb\big(\bigcup_{0<\ab{k}\le N} \Sigma(L_k,\ka)\big)
\le C N^{\exp} \frac{\chi+\delta}{\delta_0}\frac{\ka}{\de_0}$$
and
$$\dist(\D\setminus \Sigma(L_k,\ka),\Sigma(L_k,\frac\ka2))>\frac{1}{C}\frac{\ka}{N(\chi+\delta)}
 \footnote{\ this is assumed to be fulfilled if $\Sigma_{L_j}(\frac\ka2)=\emptyset$}
 $$
for any $\ka>0$.

The exponent $\exp$ only depends on $\#\cA$. $C$ is an absolut constant.
\end{lemma}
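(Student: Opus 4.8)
The statement is a standard "one small divisor" measure estimate, and the plan is to treat the two assertions separately, both resting on the transversality Assumption A2(i) applied to the special case $a=b=\emptyset$, in which $L(\r) = \langle k,\om'(\r)\rangle$ is a scalar. Recall that $h \in \NF_\vark(\Delta,\de)$ means $\om$ satisfies Hypothesis $\omega$, i.e.\ $|\om - \om_{\mathrm up}|_{\Ca^{s_*}(\D)} \le \de$; since \eqref{ass} forces $\de \le \frac1C c' \le \de_0$ (using the convention \eqref{Conv} that $c' \le \de_0$), and since $\Ca^{s_*} \hookrightarrow \Ca^1$, we have $|\om - \om_{\mathrm up}|_{\Ca^1(\D)} < \de_0$ for $C$ large enough. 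Hence A2(i) applies to $\om' = \om$: for each fixed $k \in \Z^n\setminus 0$, either $L_k(\r) = \langle k,\om(\r)\rangle$ is $\de_0$-invertible on all of $\D$ (so $\Sigma(L_k,\ka) = \emptyset$ whenever $\ka \le \de_0$, which we may assume since otherwise the bounds are trivial after adjusting $C$), or there is a unit vector ${\mathfrak z}$ with $|\p_{\mathfrak z} L_k(\r)| \ge \de_0$ for all $\r \in \D$ — i.e.\ the scalar function $L_k$ has derivative bounded below by $\de_0$ in a fixed direction.

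\textbf{Measure estimate.} In the second (non-degenerate) case, fix the direction ${\mathfrak z}$ and foliate $\D$ by line segments parallel to ${\mathfrak z}$. On each such segment, $t \mapsto L_k(\r_0 + t{\mathfrak z})$ is $\Ca^1$ with derivative of absolute value $\ge \de_0$, hence strictly monotone, so the sublevel set $\{t : |L_k(\r_0+t{\mathfrak z})| < \ka\}$ is an interval of length $\le 2\ka/\de_0$. Integrating over the transversal directions (the diameter of $\D$ being $\le 2$) gives $\Leb(\Sigma(L_k,\ka)) \le C \ka/\de_0$. Summing over $0 < |k| \le N$, of which there are $\le C N^{\#\cA}$ (here $n = \#\cA$), yields
$$\Leb\Big(\bigcup_{0<|k|\le N}\Sigma(L_k,\ka)\Big) \le C N^{\#\cA}\,\frac{\ka}{\de_0}.$$
To get the stated bound with the extra factor $\frac{\chi+\de}{\de_0}$ one simply notes $\frac{\chi+\de}{\de_0} \ge 1$ by the convention \eqref{Conv} ($\de_0 \le \chi$), so the displayed bound is weaker and thus also holds; this factor is carried along only for uniformity with the companion lemmas. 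The exponent $\exp$ may be taken to be $\#\cA$.

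\textbf{Distance estimate.} For the second assertion, let $\r \in \D\setminus\Sigma(L_k,\ka)$, so $|L_k(\r)| \ge \ka$, and let $\r' \in \Sigma(L_k,\frac\ka2)$, so $|L_k(\r')| < \frac\ka2$. Then $|L_k(\r) - L_k(\r')| > \frac\ka2$. Since $L_k = \langle k,\om\rangle$ and $|k| \le N$, the Lipschitz constant of $L_k$ on $\D$ is at most $N |\nabla_\r\om|_{\Ca^0(\D)} \le N(|\nabla_\r\om_{\mathrm up}|_{\Ca^0} + |\nabla_\r(\om-\om_{\mathrm up})|_{\Ca^0}) \le C N(\chi + \de)$, using the definition of $\chi$ and Hypothesis $\omega$. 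Therefore $|\r - \r'| > \frac\ka2 \cdot \frac{1}{CN(\chi+\de)}$, which is the claimed lower bound after absorbing the $\frac12$ into $C$. If $\Sigma(L_k,\frac\ka2) = \emptyset$ the inequality is vacuous (as the footnote notes), and if we are in the $\de_0$-invertible case with $\ka \le \de_0$ then again $\Sigma(L_k,\ka) = \Sigma(L_k,\frac\ka2) = \emptyset$ and there is nothing to prove.

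\textbf{Main obstacle.} There is no serious obstacle here; the only points requiring a little care are (a) checking that \eqref{ass} together with the convention \eqref{Conv} really does put us in the regime where A2(i) is applicable to $\om' = \om$ and where $\Sigma(L_k,\ka)$ is empty in the degenerate branch (handled by assuming $\ka \lesssim \de_0$, else the bounds are trivial), and (b) bookkeeping the constants so that the single exponent $\exp$ depends only on $\#\cA$ and the spurious factor $\frac{\chi+\de}{\de_0}\ge 1$ is correctly accounted for. Everything else is the elementary "monotone function in a good direction" argument. I expect the write-up to be short.
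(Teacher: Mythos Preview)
Your proposal is correct and follows essentially the same route as the paper: invoke A2(i) with $a=b=\emptyset$ (after checking $\de\le\de_0$ so that $\om'=\om$ is admissible), use the directional derivative lower bound $\de_0$ to get a one-dimensional measure estimate on each line parallel to ${\mathfrak z}$, sum over $|k|\le N$, and derive the distance bound from the Lipschitz estimate $|\nabla_\r L_k|\le N(\chi+\de)$ via the mean value theorem. The only cosmetic difference is that you obtain the sharper single-$k$ bound $\Leb(\Sigma(L_k,\ka))\lsim \ka/\de_0$ directly from monotonicity and then insert the harmless factor $(\chi+\de)/\de_0\ge1$, whereas the paper records the measure as $\lsim \frac{N(\chi+\de)}{\de_0}\frac{\ka}{\de_0}$ (in the spirit of the general transversality Lemma~\ref{lTransv1}); after summation both give the stated estimate.
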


\begin{proof} 
Since $\delta\le\delta_0$, 
using  Assumption A2$(i)$, with $a=b=\emptyset$,  we have, for each $k\not=0$, either that
$$ |\langle\om(\r), k\rangle|\ge \de_0\ge \ka\quad \forall \r\in \D$$
or that
$$ \p_{\mathfrak z} \langle\om(\r), k\rangle  \ \geq \delta_0\quad \forall \r\in \D
$$
(for some suitable choice of a unit vector $\mathfrak z$). The first case implies  
$\Sigma(L_{k},\ka)=\emptyset$. The second case implies  that 
 $\Sigma(L_k,\kappa)$
 has Lebesgue measure 
 $$\lsim \frac{N(\chi+\delta)}{\delta_0} \frac{\ka}{\de_0}.$$
 Summing up over all $0<\ab{k}\le N$ gives the first statement. The second statement
 follows from the mean value theorem and the bound
$$\ab{\nabla_\r L_k(\r)}\le N(\chi+\delta).$$
\end{proof}

\begin{lemma}\label{lSmallDiv2}
Let 
$$L_{k,[a]}=\big(  \langle k,\om \rangle I - J A \big)_{[a]}.$$
There exists  a constant $C$ such that if \eqref{ass} holds, then,
$$\Leb\big(\bigcup_{\begin{subarray}{c} 0<\ab{k}\le N\\  [a] \end{subarray}} \Sigma(L_{k,[a]}(\ka)\big)
\le C N^{\exp} \big(\frac{\chi+\delta}{\delta_0}\big) (\frac{\ka}{\delta_0})^{\frac1{{s_*}}}$$
and
$$\dist(\D\setminus \Sigma(L_{k,[a]},\ka),\Sigma(L_{k,[a]},\frac\ka2))>\frac{1}{C}\frac\ka{N(\chi+\delta)},$$
for any $\ka>0$.

$\exp$ only depends  on $\frac{d_*}{\beta_1}$ and $\#\cA$. $C$ is an absolut constant that depends
on $c$ and $\sup_\D\ab{\om}$.

\end{lemma}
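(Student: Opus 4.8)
The plan is to reduce the estimate for the block-matrix operator $L_{k,[a]}=(\langle k,\om\rangle I - JA)_{[a]}$ to an application of Assumption A2, just as Lemma \ref{lSmallDiv1} used A2$(i)$. First I would fix a block $[a]$ and a frequency vector $k\neq 0$, and note that since $h\in\NF_\vark(\Delta,\delta)$, the matrix $A(\r)$ is block-diagonal over $\E_\Delta$, so $A_{[a]}$ is a well-defined finite matrix of size controlled by $d_\Delta$ (using Remark \ref{remark-blocks}), and $(JA)_{[a]}=JA_{[a]}$ differs from the unperturbed $JA_{\mathrm up,[a]}$ by at most $\delta\langle a\rangle^{-\vark}$ in operator norm by Hypothesis B. Passing to the complex variables \eqref{transf} turns $JA_{[a]}$ into a block with $P=0$ (since $A$ is on normal form), so the relevant operator becomes, after conjugation, essentially multiplication by $\langle k,\om\rangle$ plus $\mathbf i Q_{[a]}$ or $\mathbf i JH_{[a]}$; this is exactly the shape of the operators $L(\r)$, $L(\r,\lambda)$ in A2$(i)$--$(iii)$, up to the small perturbation coming from $\delta$ and from $|\om-\om_{\mathrm up}|\le\delta\le\delta_0$, which is absorbed because A2 is stated for all $\Ca^1$-perturbations $\om'$ of $\om_{\mathrm up}$ (Remark \ref{remro}). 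Also one must treat separately the index $a\in\F$ (where $[a]=\F$ and $JH$ enters) and the elliptic indices $a\in\L_\infty$ (where $Q_{[a]}$ enters), but in both cases A2 supplies the dichotomy.

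Next I would run the standard measure estimate on the exceptional set for a single $(k,[a])$. The dichotomy in A2 gives, for each $(k,[a])$, either that $L_{k,[a]}(\r)$ is $\delta_0$-invertible for all $\r$ — in which case $\Sigma(L_{k,[a]},\ka)=\emptyset$ once $\ka<\delta_0$ (which we may assume, the case $\ka\ge\delta_0$ being trivial since $\D$ has bounded measure) — or that some directional derivative of the determinant $\det L_{k,[a]}(\r)$, of order $j$ with $1\le j\le s_*$, is bounded below by $\delta_0\|L_{k,[a]}\|_{\Ca^j}^{m-1}$. In the second case I would use the elementary fact that the set where $\|L^{-1}\|>1/\ka$ is contained in the set where $|\det L|\le \ka\,\|L\|^{\dim-1}$ times some absolute constant, and then apply the classical one-variable sublevel-set lemma for a $\Ca^j$ function whose $j$-th derivative is bounded below: such a set has one-dimensional measure $\lesssim (\ka/\delta_0)^{1/j}\ge (\ka/\delta_0)^{1/s_*}$ (here the exponent $1/s_*$ is the worst case, explaining the power in the statement). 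Integrating over the remaining $p-1$ coordinates gives $\Leb(\Sigma(L_{k,[a]},\ka))\lesssim (\chi+\delta)\delta_0^{-1}(\ka/\delta_0)^{1/s_*}$; the factor $(\chi+\delta)/\delta_0$ comes from normalizing the $\Ca^j$-norm of $L_{k,[a]}$ by its lower bound, and $\|L_{k,[a]}\|_{\Ca^{s_*}}\lesssim |k|(\chi+\delta)\le N(\chi+\delta)$.

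The summation over $0<|k|\le N$ and over all blocks $[a]$ is where the polynomial-in-$N$ loss appears. There are $\lesssim N^{\#\cA}$ values of $k$ (recall $n=\#\cA$). The number of blocks that genuinely matter is controlled because, for $|a|$ large, invertibility is automatic: using the spectral asymptotics \eqref{la-lb-ter}, $\la(\r)\approx |a|^{\beta_1}$, so when $|a|$ is large compared to $|k|\,\sup_\D|\om|$ the operator $\langle k,\om\rangle I - JA_{[a]}$ has all its ``eigenvalues'' of size $\gtrsim |a|^{\beta_1}$ and is trivially $\delta_0$-invertible — hence $\Sigma=\emptyset$. This restricts the relevant $|a|$ to $\lesssim (N\sup_\D|\om|)^{1/\beta_1}$, giving $\lesssim (N\sup_\D|\om|)^{d_*/\beta_1}$ relevant blocks; this is the source of the dependence of the exponent on $d_*/\beta_1$ and of the dependence of $C$ on $c$ and $\sup_\D|\om|$. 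Multiplying the three factors $N^{\#\cA}\cdot N^{d_*/\beta_1}\cdot (\chi+\delta)\delta_0^{-1}(\ka/\delta_0)^{1/s_*}$ and bounding $(\ka/\delta_0)^{1/s_*}$ as in the statement yields the first claim. The second claim, the separation estimate, follows as in Lemma \ref{lSmallDiv1} from the mean value theorem: on the complement of $\Sigma(L_{k,[a]},\ka)$ one has $\|L_{k,[a]}^{-1}\|\le 1/\ka$, the map $\r\mapsto L_{k,[a]}(\r)$ has Lipschitz constant $\le N(\chi+\delta)$, and a resolvent perturbation bound ($\|(L+E)^{-1}\|\le \|L^{-1}\|/(1-\|L^{-1}\|\,\|E\|)$) shows one must move a distance $\gtrsim \ka/(N(\chi+\delta))$ before $\|L^{-1}\|$ can double. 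The main obstacle is bookkeeping: correctly matching the various operators $L_{k,[a]}$ (for the three cases $a\in\L_\infty$, $a\in\F$, and the pair structure) to the precise forms in A2$(i)$--$(iii)$, verifying that the perturbation by $\delta$ (from Hypothesis B and $|\om-\om_{\mathrm up}|\le\delta$) stays within the $\delta_0$-tolerance of the transversality condition, and tracking the constants so that the final dependence is exactly on $c$, $\sup_\D|\om|$, $d_*/\beta_1$ and $\#\cA$ as stated.
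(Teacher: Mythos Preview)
Your overall architecture is right and matches the paper: split into $a\in\L_\infty$ versus $a\in\F$, use the spectral asymptotics \eqref{la-lb-ter} to cut off $|a|\lesssim N^{1/\beta_1}$, sum over $k$ and blocks, and get the separation estimate from a Lipschitz/resolvent argument. The $\F$-case via $\det L$, A2$(iii)$ and Lemma~\ref{lTransv1} is exactly what the paper does.

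The gap is in the $\L_\infty$-case. You write that the dichotomy in A2 gives ``some directional derivative of the determinant $\det L_{k,[a]}(\rho)$, of order $j$ with $1\le j\le s_*$, bounded below by $\delta_0\|L\|_{\Ca^j}^{m-1}$''. But A2$(i)$ says nothing about the determinant: its second alternative is $|\langle v,\partial_{\mathfrak z}L(\rho)v\rangle|\ge\delta_0$ for every unit vector $v$. From this you cannot bound $\partial_{\mathfrak z}\det L$ from below (the eigenvalue contributions to $\partial_{\mathfrak z}\det L=\sum_i(\partial_{\mathfrak z}\lambda_i)\prod_{j\ne i}\lambda_j$ can cancel), and the exponent $m-1$ with $m=2\#\F$ would in any case be the wrong dimension, since the $\L_\infty$-blocks have size $\lesssim d_\Delta^{d_*}$, not $m$. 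So your stated argument does not go through for $a\in\L_\infty$.

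What the paper does instead is exploit that after the change of variables \eqref{transf} the operator $L_{k,[a]}$ for $a\in\L_\infty$ is \emph{Hermitian}: then $\|L^{-1}\|=\max_i|\lambda_i|^{-1}$, and for each eigenvalue one has $\partial_{\mathfrak z}\lambda_i(\rho)=\langle v_i(\rho),\partial_{\mathfrak z}L(\rho)v_i(\rho)\rangle$ (at points where $\lambda_i$ is $\Ca^1$; the general case by analytic approximation). Now A2$(i)$ gives $|\partial_{\mathfrak z}\lambda_i|\ge\delta_0/2$ directly, and the one-dimensional sublevel estimate applies to each eigenvalue with $j=1$, yielding $\Leb(\Sigma(L_{k,[a]},\kappa))\lesssim d_\Delta^{d_*}\,\kappa/\delta_0$. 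This is then absorbed into the stated bound using $\kappa/\delta_0\le(\kappa/\delta_0)^{1/s_*}$ and $(\chi+\delta)/\delta_0\ge1$ (convention \eqref{Conv}). Also, A2$(ii)$ plays no role in this lemma; it is only needed for the mixed $\L_\infty$--$\F$ blocks in Lemma~\ref{lSmallDiv3}.
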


\begin{proof}  
Consider first $a\in\L_\infty$. Then  $L_{k,[a]}$ decouples into to a sum of two Hermitian
operators of the form
$$\langle k,\om\rangle I + Q_{[a]}$$
-- denoted $L=L_{k,[a]}$  --  where $Q_{[a]}$  is  the restriction of $Q$ to $[a]\times [a]$. Since $L$ is Hermitian:

\begin{itemize}
\item
 $$ ||L(\r)^{-1}|| \leq \max  ||\big(\langle k,\om (\r)\rangle+\lambda(\r) \big)^{-1}|| ,$$
 where the maximum is taken over all eigenvalues $\lambda(\r)$ of $Q(\r)$;
\item for any $\r_0\in\D$,
 $$\p_{\mathfrak z} \langle v(\r)  L(\r), v(\r)\rangle \slash_ {\r=\r_0} \ =\   
 \p_{\mathfrak z} \langle v(\r_0)  L(\r), v(\r_0)\rangle \slash_ {\r=\r_0} $$
for any  eigenvector  $v(\r)$  of $L(\r)$ (associated to an eigenvalue which is $\cC^1$ in the direction $\mathfrak z$ ).
\end{itemize}

If we let
$$L_{\textrm up}= \langle k,\om\rangle I + (Q_{\textrm up}) _{[a]},$$
where $ Q_{\textrm up}$ comes from the unperturbed Hamiltinonian,
then it follows, from \eqref{hypoB} and  \eqref{ass},   that   
$$\aa{L-L_{\textrm up}}_{\cC^{{s_*}}(\D)} \leq  \de\leq \frac{\delta_0}2$$
and, hence,
$$\textrm{d}_{\Haus}(\s(L),\s(L_{\textrm up}))<\delta.$$
If now $L_{\textrm up}$ is $\delta_0$-invertible, then  this  implies that
$L$ is $\frac{\delta_0}2$-invertible. 

Otherwise
$$\p_{\mathfrak z}\big( \langle k,\om(\r)\rangle+\lambda(\r)\big)\slash_{\r=\r_0}=
\langle v(\r),\p_{\mathfrak z}  L(\r) v(\r)\rangle\slash_{\r=\r_0} =$$
$$=
\langle v(\r),\p_{\mathfrak z}  L_{\textrm up}(\r) v(\r)\rangle\slash_{\r=\r_0} +\O( \delta),$$
where $v(\r)$ is a unit eigenvector of $L(\r)$ associated with the eigenvalue $\lambda(\r)$.
If $L_{\textrm up}$ is not $\delta_0$-invertible, then, by  assumption A2$(i)$
there exists a unit vector ${\mathfrak z}$ such that 
$$\ab{\p_{\mathfrak z}\big( \langle k,\om(\r)\rangle+\lambda(\r)\big)}\slash_{\r=\r_0}\ge \delta_0-\delta \ge \frac{\delta_0}2.$$
Hence, the Lebesgue measure of $\Sigma(L _{k,[a]},\ka)$ is $\lsim  d_{\Delta}^{d_*} \frac\ka{\delta_0}$  --
recall that, by Remark  \ref{remark-blocks}, the operator is of dimension $\lsim d_{\Delta}^{d_*}$.
(This argument is valid  if $\lambda(\r)$ is $\cC^1$ in the direction $\mathfrak z$ which can always be assumed
when $Q$ is analytic in $\r$. The non-analytic case follows by analytical approximation.)

Since $|\langle k, \om(\r)\rangle |\lsim \ab{k}\lsim N$, it follows,  by \eqref{la-lb-ter},  that
$$|\langle k, \om(\r)\rangle  + \lambda(\r)|\geq \ab{\la(\r)}-\delta-\Cte\ab{k}
\ge \ab{a}^{\beta_1}- c \langle a \rangle ^{-\beta_2} -\delta-\Cte\ab{k}$$
 for some appropriate $a\in[a]$.
Hence, $\Sigma(L_{k,[a]}, \ka)=\emptyset$ for
 $ |a |\gsim N^{\frac1{\beta_1}}$.

Summing up  over all $0<\ab{k}\le N$ and 
all  $ |a |\lsim N^{\frac1{\beta_1}}$ gives the first estimate.

\medskip

Consider now $a\in\F$.  Then $L(\r)=\big(\langle k,\om\rangle I- {\mathbf i}JH\big)$ and
it follows, by \eqref{hypoB} and  \eqref{ass}, that
$$\aa{L-L_{\textrm up}}_{\cC^{{s_*}}}\le   \de\leq  \frac12 \delta_0,$$ 
where $L_{\textrm up}(\r)=\big(\langle k,\om \rangle I- {\mathbf i}JH_{\textrm up}\big)$.
If now $L_{\textrm up}$ is $\delta_0$-invertible, then
$L$ will be  $\frac{\delta_0}2$-invertible.

Otherwise,
$$\ab{\det L-\det L_{\textrm up}}_{\cC^{j}} \le \Cte  \delta\big(\aa{L_{\textrm up}}_{\cC^{j}}+\delta \big)^{m-1}$$
and, by  assumption A2(iii),
there exists a unit vector ${\mathfrak z}$ and an integer  $1\le j\le {s_*}$ such that
$$\ab{ \p_{\mathfrak z}^j \det L_{\textrm up}(\r) }\ge \delta_0\aa{ L_{\textrm up}}_{\cC^{j}(\D)}^{m-1}, \quad \forall \r\in \D.$$
This implies that $|L_{\textrm up}|_{\cC^{j}}\ge \cte \delta_0$ and,
hence,
$$\ab{\det L-\det L_{\textrm up}}_{\cC^{j}}\le \Cte  \delta \aa{L_{\textrm up}}_{\cC^{j}}^{m-1}.$$
Thus
$$\ab{ \p_{\mathfrak z}^j \det L(\r) }\ge (\delta_0-\Cte\delta)\aa{ L}_{\cC^{j}(\D)}^{m-1}, \quad \forall \r\in \D,$$
and $\delta_0-\Cte\delta\ge\frac{\delta_0}2$.

Then, by Lemma  \ref{lTransv1},
$$\frac{\det L(\r)} { \aa{ L }_{\cC^{j}}^{m-1} } \ge \ka,$$
outside a set $\Sigma'$ of Lebesgue measure
$$\le \Cte \frac{|\nabla_\r L|_{\cC^{{j}-1}(\D)}}{\de_0}(\frac\ka{\de_0})^{\frac1j}.$$
Hence, by Cramer's rule, 
$$
\Leb \Sigma(L,\eps)\le \Cte \frac{|\nabla_\r L |_{\cC^{{j}-1}(\D)}}{\de_0}(\frac\ka{\de_0})^{\frac1j}
\le  \Cte \frac{N(\chi+\delta)}{\de_0}(\frac\ka{\de_0})^{\frac1j}.$$
Summing up  over all $\ab{k}\le N$  gives the first estimate.

\medskip

The second estimate follows from the mean value theorem and the bound
$$\ab{\nabla_\r L_{k,[a]}(\r)}\le N(\chi+\delta).$$

\end{proof}

\begin{lemma}\label{lSmallDiv3}
Let  
$$L_{k,[a],[b]}=(\langle k,\om \rangle I-{\mathbf i}\cAd_{JA})_{[a]}^{[b]}.$$
There exists  a constant $C$ such that if \eqref{ass} holds, then,
$$\bigcup_{\begin{subarray}{c}0<\ab{k}\le N\\ \ab{a-b}\le\Delta' \end{subarray}} \Sigma(L_{k,[a],[b]},\ka)
\le \Cte N^{\exp}\big(\frac{\chi+\delta}{\delta_0}\big)  (\frac{\ka}{\delta_0})^{\alpha}$$
and
$$\dist(\D\setminus \Sigma(L_{k,[a],[b]},\ka),\Sigma(L_{k,[a],[b]},\frac\ka2))>\frac{1}{C}\frac\ka{N(\chi+\delta)},$$
for any $\ka>0$.

The exponent $\exp$ only depends  on $\frac{d_*}{\beta_1} $ and $\#\cA$. The exponent
$\alpha$ is a positive constant only depending on  $s_*,\frac{d_*}{\vark} ,\frac{d_*}{\beta_3} $.
$C$ is an absolut constant that depends on $c$ and $\sup_\D\ab{\om}$.

\end{lemma}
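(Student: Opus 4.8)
The structure of the statement for Lemma \ref{lSmallDiv3} mirrors that of Lemmas \ref{lSmallDiv1} and \ref{lSmallDiv2}, and the plan is to follow the same template, now for the operator $L_{k,[a],[b]} = (\langle k,\om\rangle I - {\mathbf i}\,\cAd_{JA})_{[a]}^{[b]}$, which acts on the space of $2\#[a]\times 2\#[b]$ matrices (of dimension $\lsim d_\Delta^{2d_*}$ by Remark \ref{remark-blocks}). First I would split into cases according to whether $[a]$ and $[b]$ lie in $\L_\infty$ or contain $\F$. When both sit in $\L_\infty$, the operator $\cAd_{JA}$ restricted to the relevant blocks decouples, after the change of variables \eqref{transf}, into pieces of the three types appearing in Assumption A2$(i)$, namely $X\mapsto \langle k,\om'\rangle X + Q_{[a]} X \pm X Q_{[b]}$; when one of $[a],[b]$ equals $\F$, one uses instead A2$(ii)$ (the mixed elliptic/hyperbolic case, via the determinant $P(\r,\lambda)=\det L(\r,\lambda)$), and when both equal $\F$ one uses A2$(iii)$. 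Throughout, the comparison $\aa{L - L_{\textrm{up}}}_{\cC^{{s_*}}(\D)} \le \de \le \tfrac{\delta_0}{2}$ coming from \eqref{hypoB} and \eqref{ass} lets one transfer the dichotomy from the unperturbed operator $L_{\textrm{up}}$ (built from $Q_{\textrm{up}}, H_{\textrm{up}}$) to $L$, at the cost of halving $\delta_0$.

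Next, in the ``invertible'' branch of the dichotomy one simply gets $\Sigma(L_{k,[a],[b]},\ka)=\emptyset$ once $\ka < \delta_0/2$, so nothing to do. In the ``transversal'' branch, one has a unit vector ${\mathfrak z}$ and — depending on which sub-case — either a lower bound $\ab{\langle v,\p_{\mathfrak z} L v\rangle}\ge \delta_0/2$ directly, or a lower bound of the form $\ab{\p_{\mathfrak z}^j \det L}\ge \tfrac{\delta_0}{2}\aa{L}_{\cC^j}^{m-1}$ for some $1\le j\le s_*$. In the first situation one estimates the Lebesgue measure of the sublevel set directly by the mean value theorem as $\lsim \dim(L)\cdot \tfrac{\ka}{\delta_0} \lsim d_\Delta^{2d_*}\tfrac{\ka}{\delta_0}$; in the determinant situations one invokes the quantitative transversality lemma (the ``Lemma \ref{lTransv1}'' used already in the proof of Lemma \ref{lSmallDiv2}, or its block-operator variant) combined with Cramer's rule to convert the control of $\det L$ into control of $L^{-1}$, getting a measure bound $\lsim \tfrac{N(\chi+\delta)}{\delta_0}(\tfrac{\ka}{\delta_0})^{1/j}$. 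Taking $\alpha$ to be the worst of the resulting exponents — it should depend on $s_*$ (through $j\le s_*$), on $d_*/\vark$ and on $d_*/\beta_3$, the last two entering through the dimension count and the density of blocks for which the operator is not automatically invertible — accounts for the stated dependence of $\alpha$.

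Then comes the summation step. As in Lemma \ref{lSmallDiv2}, one uses the spectral asymptotics \eqref{la-lb} to argue that for $[a],[b]$ with $\ab{a-b}\le\Delta'$ and $\ab{a}$ (hence $\ab{b}$) large, the quantity $\la - \lb$ differs from $\ab{a}^2-\ab{b}^2$ by at most $c'c\max(\langle a\rangle^{-\beta_3},\langle b\rangle^{-\beta_3})$, which is small, while $\langle k,\om\rangle$ is $\lsim N$; this forces $\Sigma(L_{k,[a],[b]},\ka)=\emptyset$ once $\ab{a}$ is large compared with a suitable power of $N$ (here $\beta_3$ and $\vark$ govern how the decay competes with $\ka$, producing the $d_*/\beta_3$ and $d_*/\vark$ dependence). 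Summing over $0<\ab{k}\le N$, over the $\lsim N^{\#\cA}$ many blocks with $\ab{a}\lsim N^{1/\beta_1}$, and over the (finitely many, bounded by a power of $\Delta'$ and $d_\Delta$) blocks $[b]$ with $\ab{a-b}\le \Delta'$, yields the first displayed bound with $\exp$ depending only on $d_*/\beta_1$ and $\#\cA$. The second displayed bound — the ``gap'' estimate between $\D\setminus\Sigma(L,\ka)$ and $\Sigma(L,\ka/2)$ — follows exactly as in the previous two lemmas from the mean value theorem together with the Lipschitz bound $\ab{\nabla_\r L_{k,[a],[b]}(\r)}\le N(\chi+\delta)$.

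\textbf{Main obstacle.} The routine parts are the comparison with $L_{\textrm{up}}$ and the final summation; the delicate point is the transversality-to-measure step in the case where $[a]$ or $[b]$ meets $\F$, i.e. when the operator is built from the possibly hyperbolic block $JH$. There the relevant transversality is phrased through the determinant (A2$(ii)$ and A2$(iii)$) rather than through a Hermitian eigenvalue, so one cannot use the clean self-adjoint argument of Lemma \ref{lSmallDiv2}; instead one must control $L^{-1}$ via $\det L$ and the cofactor matrix, check that the polynomial degree $m = 2\#\F$ enters the exponents correctly, and verify that the normalization $\aa{L}_{\cC^j}^{m-1}$ appearing in A2 is compatible with the Cramer's-rule bound — exactly the bookkeeping that made the second half of the proof of Lemma \ref{lSmallDiv2} the nontrivial part, now carried out for the full $\cAd_{JA}$ operator on a tensor-product block.
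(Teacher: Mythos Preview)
Your plan handles the mixed and purely hyperbolic cases ($[a]$ or $[b]$ equal to $\F$) correctly, and the individual measure bound and the gap estimate are fine. But you have misidentified the main obstacle, and your summation step in the case $a,b\in\L_\infty$ has a genuine gap.

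The hard case is the Hermitian operator
\[
X\mapsto \langle k,\om\rangle X + Q_{[a]}X - XQ_{[b]},\qquad a,b\in\L_\infty,
\]
coming from the $\xi\eta$-component. For this operator your claim that ``$\Sigma(L_{k,[a],[b]},\ka)=\emptyset$ once $|a|$ is large compared with a suitable power of $N$'' is false. The eigenvalues are $\langle k,\om\rangle+\alpha-\beta$ with $\alpha,\beta$ eigenvalues of $Q_{[a]},Q_{[b]}$, and by \eqref{la-lb} $\alpha-\beta$ is close to $|a|^2-|b|^2$; but $|a|^2-|b|^2$ can take any integer value of size $\lsim N$ no matter how large $|a|$ is, so nothing prevents $\langle k,\om\rangle+\alpha-\beta$ from being small. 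In particular the sum over $[a],[b]$ is genuinely infinite and cannot be cut off at $|a|\lsim N^{1/\beta_1}$.

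The paper's proof supplies the missing idea: use that $\beta_1=2$, so $|a|^2-|b|^2\in\Z$. For fixed $k$ there are only $O(N)$ integers $n$ with $|\langle k,\om\rangle+n|$ possibly $<1$, so one first excises a set $\Sigma(2\ka')$ of measure $\lsim N\ka'/\delta_0$ on which $|\langle k,\om\rangle+n|<2\ka'$ for some such $n$; this set is \emph{common to all} pairs $(a,b)$. Outside it, \eqref{la-lb} and \eqref{hypoB} give
\[
|\langle k,\om\rangle+\alpha-\beta|\ge 2\ka'-\frac{2\delta}{\langle a\rangle^{\vark}}-\frac{2c'c}{\langle a\rangle^{\beta_3}},
\]
which is $\ge\ka'$ unless $|a|\le M(\ka')$, a cutoff depending on $\ka'$ through $(\delta/\ka')^{1/\vark}$ and $(c'/\ka')^{1/\beta_3}$. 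One then sums the individual estimates only over $|a|,|b|\lsim M(\ka')+N$ and finally optimizes the intermediate scale $\ka'\in[\ka,\delta_0]$ to balance the two contributions. It is this optimization that produces the exponent $\alpha$ with its stated dependence on $d_*/\vark$ and $d_*/\beta_3$; your proposal never introduces $\ka'$ and so cannot recover it.
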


\begin{proof} 
Consider first  $a,b \in\F$. This case is treated as the operator
$L(\r)=\big(\langle k,\om \rangle I- {\mathbf i}JH\big)$  in the previous lemma.

\medskip

Consider then  $a\in\L_\infty$ and $b \in\F$, so that
$$L_{k,[a]}(\r):X\mapsto \langle k,\om(\r) \rangle X + Q_{[a]}(\r)X+X{\mathbf i}JH(\r).$$
Let
$$L(\r,\lambda):X\mapsto \langle k,\om(\r) \rangle X+\lambda X+ {\mathbf i}XJH(\r)$$
and
$$P(\r,\lambda)=  \det L(\r,\lambda).$$

Since $L_{k,[a]}(\ka)$ is ``partially'' Hermitian,
 $$ ||L_{k,[a]}(\r)^{-1}|| \leq \max \aa{L(\r,\lambda(\r))^{-1}},$$
 where the maximum is taken over all eigenvalues $\lambda(\r)$ of $Q(\r)$.

If we let
$$L_{\textrm up}(\r,\lambda):X\mapsto \langle k,\om(\r) \rangle X + \lambda X+X{\mathbf i}JH_{\textrm up}(\r),$$
then it follows, from \eqref{hypoB} and  \eqref{ass}, that   
$$\aa{L-L_{\textrm up}}_{\cC^{{s_*}}(\D)} \leq  \de\leq \frac{\delta_0}2.$$
If $L_{\textrm up}$ is $\delta_0$-invertible, then this  implies that
$L$ is $\frac{\delta_0}2$-invertible.

Otherwise
$$\frac{d}{d_{\mathfrak z}}P(\r,\lambda(\r))=
\p_{\mathfrak z}P(\r,\lambda(\r))+\p_\lambda P(\r,\lambda(\r))\langle v(\r),\p_{\mathfrak z}Q(\r)v(\r)\rangle=$$
$$=\p_{\mathfrak z}P_{\textrm up}(\r,\lambda_a(\r))+\p_\lambda P_{\textrm up}(\r,\lambda_a(\r))
\langle v(\r),\p_{\mathfrak z}Q_{\textrm up}(\r)v(\r)\rangle
+\O(\delta || L||_{\cC^{1}(\D)}^{m-1}).$$
By  Assumption A2$(ii)$
there exists a unit vector ${\mathfrak z}$ such that
$$
\ab{\frac{d}{d_{\mathfrak z}}P(\r,\lambda(\r))}\ge\frac{\delta_0}2 || L||_{\cC^{1}(\D)}^{m-1}.$$
Hence, the Lebesgue measure of $\Sigma(L_{k,[a]} ,\ka)$ is $\lsim  d_{\Delta}^{d_*} \frac\ka{\delta_0}$  --
recall that, by Remark  \ref{remark-blocks}, the operator is of dimension $\lsim d_{\Delta}^{d_*}$.
(This argument is valid  if $\lambda(\r)$ is $\cC^1$ in the direction $\mathfrak z$ which can always be assumed
when $Q$ is analytic in $\r$. The non-analytic case follows by analytical approximation.)

Let $\lambda(\r)$ be an eigenvalue of $Q_{[a]}$. Since $|\langle k, \om(\r)\rangle |\lsim \ab{k}\lsim N$, it follows,  by \eqref{la-lb-ter},  that
$$|\langle k, \om(\r)\rangle  + \lambda(\r)|\geq \ab{\la(\r)}  -\delta-\Cte\ab{k}
\ge \ab{a}^{\beta_1}-c \langle a \rangle ^{-\beta_2} -\delta -\Cte\ab{k}$$
 for some appropriate $a\in[a]$. Hence, 
$\Sigma(L_{k,[a]},\ka)=\emptyset$ for $ |a |\gsim(N)^{\frac1{\beta_1}}$.

Summing up  over all $0<\ab{k}\le N$ and 
all  $ |a |\lsim(N)^{\frac1{\beta_1}}$ gives the first estimate.

\medskip

Consider finally $a,b \in\L_\infty$. Then  $L_{k,[a],[b]}$ decouples into a sum of four Hermitian
operators of the forms
$$L_{k,[a],[b]}(\r):X\mapsto \langle k,\om \rangle X + Q_{[a]}X+X{}^tQ_{[b]}$$
and
$$L_{k,[a],[b]}(\r):X\mapsto \langle k,\om \rangle X + Q_{[a]}X-XQ_{[b]}.$$
The first one is treated exactly as the operator $X\mapsto \langle k,\om \rangle X + Q_{[a]}X$
in the previous lemma, so let us concentrate on the second one. 
It follows as in the previous lemma
that  the Lebesgue measure of $\Sigma(L_{k,[a],[b]},\ka')$ is 
$\lsim  d_{\Delta}^{2d_*} \frac{\ka'}{\delta_0}$  --
recall that, by Remark  \ref{remark-blocks}, the operator is of dimension $\lsim d_{\Delta}^{2d_*}$.

The problem now is the measure estimate of $\bigcup\Sigma(L_{k,[a],[b]},\ka)$ since, a priori, there may be infinitely many 
$\Sigma(L_{k,[a],[b]},\ka)$ that are non-void. 
We can assume without restriction that $\ab{a}\le \ab{b}$.
Since $|\langle k, \om(\r)\rangle |\le\Cte \ab{k}\le\Cte N$, it is enough to 
consider  $\ab{b}-\ab{a} \le\Cte N$ (because $\beta_1\ge1$).
 
Since $\beta_1=2$,  $\ab{a}^{\beta_1}-\ab{b}^{\beta_1}$ is an integer, and outside a set $\Sigma(2\ka')$
of Lebesgue measure
$$\le\Cte N\frac{\ka'}{\delta_0}$$
we have
$$|\langle k, \om(\r)\rangle \ +\ab{a}^{\beta_1}-\ab{b}^{\beta_1}|\ge2\ka'.$$

 Then, by \eqref{la-lb}, 
  $$|\langle k,\om(\r)\rangle \ +\alpha(\r)-\beta(\r)|\ge 2\ka'-
  2\frac{\de}{\langle  a\rangle^\varkappa}-  2\frac{c'c}{\langle  a\rangle^{\beta_3}}$$
for any $\alpha(\r)$ and $\beta(\r)$, eigenvalues of $Q_{[a]}(\r)$ and $Q_{[b]}(\r)$, respectively. 
Now this is $\ge\ka'$ unless
$$\ab{a} \le \Cte \min\Big(  (\frac{\de}{\ka'})^{\frac1{\vark}},( \frac{c'}{\ka'})^{ \frac1{\beta_3}}\Big)=:M.$$
 Hence, if $\ka'\ge\ka$, then
$$\bigcup_{[a],[b],k}\Sigma(L_{k,[a],[b]},\ka)\subset 
\Sigma(2\ka')\cup 
\bigcup_{ \ab{a},\ab{b}\le M+\Cte N,k}
\Sigma(L_{k,[a],[b]},\ka).$$
This set has measure
$$\lsim N\frac{\ka'}{\delta_0}+ 
\Big(N\frac{\de_0}{\ka'}\Big)^{2d_*\max( \frac1{\beta_3},\frac1{\vark})}\frac\ka{\delta_0} .$$

By an appropriate choice of $\ka'\in[\ka, \delta_0]$, this becomes
$$\le\Cte N^{\exp} (\frac{\ka}{\delta_0})^{\alpha}$$
for some $\alpha>0$ depending on $\frac{d_*}{\beta_3},\frac{d_*}{\vark}$.

\end{proof}

\section{Homological equation}\label{s6}

Let $h$ be a  normal form Hamiltonian \eqref{normform},
$$
h(r,w,\r)=\langle \om(\r), r\rangle +\frac 1 2\langle w, A(\r) w\rangle\in\NF_{\varkappa}(\Delta,\delta)$$
and assume $\vark>0$ and 
 \be\label{ass1}
 \delta \le \frac{1}{C}c' ,\ee
 where $C$ is to be determined. Let
$$\ga=(\ga,m_*)\ge \ga_*=(0,m_*).$$

\begin{remark}\label{rAbuse}
Notice the abuse of notations here. It will be clear from the context when $\ga$ is a two-vector, like in $\aa{\cdot}_{\ga,\vark}$,
and when it is a scalar, like in $e^{\ga d}$.
\end{remark}

Let $f\in \Tc_{\ga,\varkappa,\D}(\s,\mu)$. In this section we shall construct a jet-function  $S$ that solves the 
{\it non-linear homological equation}
\be\label{eqNlHomEq}
\{ h,S \}+ \{ f-f^T,S \}^T+f^T=0\ee
as good as possible  -- the reason for this will be explained in the beginning of the next section. 
In order to do this we shall start by analyzing  the {\it homological equation}
\be \label{eqHomEq}
\{ h,S \}+f^T=0.
\ee
We shall solve this equation modulo some ``cokernel'' and modulo an ``error''.

\medskip 

\subsection{Three components of the homological equation}\label{ssFourComponents}

Let us write 
$$f^T(\theta,r,w)=f_r(r,\theta)+\langle f_w(\theta),w\rangle+\frac 1 2 \langle f_{ww}(\theta)w,w \rangle$$
and recall that, by Proposition \ref{lemma:jet}, $f^T\in \Tc_{\ga,\varkappa,\D}(\s,\mu)$.
Let
$$
S(\theta,r,w)=S_r(r,\theta)+\langle S_w(\theta),w\rangle+
\frac 1 2 \langle S_{ww}(\theta)w,w \rangle,$$
where  $f_r$ and $S_r$ are affine functions in $r$ -- here we have not indicated the dependence on $\r$.

Then the Poisson bracket $\{h, S\}$ equals
\begin{multline*}
-\big( \p_{\omega} S_r(r, \theta)  + \langle \p_{\omega} S_w(\theta), w\rangle
+ \frac12 \langle \p_{\omega} S_{ww}(\theta),w\rangle + \\
+\langle AJ S_w(\theta),w\rangle + \frac12\langle AJ S_{ww}(\theta)w,w\rangle  - \frac12\langle  S_{ww}(\theta)JAw,w\rangle
\end{multline*}
where $\p_{\omega}$ denotes the derivative of the angles $\theta$ in direction $\om$.
Accordingly the  homological equation \eqref{eqHomEq}  
 decomposes into three linear equations:
$$\left\{\begin{array}{l}      
\p_{\omega} S_r(r,\theta) =f_r(r,\theta),\\  
 \p_{\omega} S_w(\theta)   -AJ  S_w(\theta)= f_w(\theta),\\ 
 \p_{\omega} S_{ww}(\theta)   - AJS_{ww}(\theta)  +S_{ww}(\theta)JA=f_{ww}(\theta).
\end{array}\right.$$

\subsection{The first equation}\label{homogene}

\begin{lemma}\label{prop:homo12}
There exists   constant $C$ such that if \eqref{ass1} holds, then,
 for any  $N\ge1$ and $\ka>0$,
 there exists  a closed  set $\D_1= \D_1(h,\ka,N)\subset \D$,  satisfying
$$\Leb (\D\setminus \D_1)\leq C N^{\exp} \frac{\chi+\delta}{\delta_0}\frac{\ka}{\de_0}$$
and there exist $\Ca^{{s_*}}$ functions $S_r$ and $R_r$ on $\C^{\cA}\times\T^\cA\times \D\to\C$,  
real holomorphic in $r,\theta$, such that for all $\r\in\D_1$
 \be\label{homo1}
 \p_{\om(\r)}S_r (r,\theta,\r)  =f_r(r,\theta,\r)-\hat f_r(r,0,\r)
 \footnote{\ $\hat f_r(r,0,\r)$ is the $0$:th Fourier coefficent, or the mean value, of the function $\theta\mapsto  f_r(r,\theta,\r)$}
 -R_r(\theta,\r)\ee
and for all $(r,\theta,\r)\in \C^{\cA}\times \T^\cA_{\s'}\times \D$, $\ab{r}<\mu$, $\s'<\s$, and $|j|\le{{s_*}}$
 \begin{align} \label{homo1S}
 |\p_\r^jS_r(r,\theta,\r)|\leq &
C \frac{1}{\ka(\s-\s')^{n}}\big(N\frac{\chi+\de}{\ka}\big)^{|j|} 
  |f^T|_{\begin{subarray}{c}\s,\mu\ \ \\ \ga, \vark,\D  \end{subarray}} ,\\ \label{homo1R}
 |\p_\r^j R_r(r,\theta,\r)|\leq & C\frac{  e^{- (\s-\s')N}}  {  (\s-\s')^{n}}|f^T|_{\begin{subarray}{c}\s,\mu\ \ \\ \ga, \vark,\D  \end{subarray}}\,. 
\end{align}

The  exponent $\exp$  only depends on $n=\#\cA$, and $C$ is an absolut constant.
\end{lemma}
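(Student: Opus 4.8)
The plan is to solve \eqref{eqHomEq} in its first ($w$-independent) component by a Fourier expansion in the angle $\theta$, truncated at $|k|\le N$, with the small divisors $\langle k,\om(\r)\rangle$ controlled through Lemma \ref{lSmallDiv1}, and then to extend the resulting function smoothly from the Cantor-type set of non-resonant parameters to all of $\D$. Here $f_r$ is the affine-in-$r$ part of the jet $f^T$, so by Proposition \ref{lemma:jet} the function $f_r$ together with its $\r$-derivatives of order $\le s_*$ is holomorphic in $\theta$ on the strip $\{|\Im\theta|<\s\}$ of the domain and, for $|r|<\mu$, bounded there by an absolute multiple of the $\Tc_{\ga,\vark,\D}(\s,\mu)$-norm $|f^T|_{\s,\mu;\ga,\vark,\D}$ of $f^T$ (using holomorphy in $\theta$, affineness in $r$, and the shape of $\O_{\ga_*}(\s,\mu)$, the loss being an absolute constant). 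Writing $f_r(r,\theta,\r)=\sum_{k\in\Z^n}\hat f_r(r,k,\r)\,e^{{\mathbf i}\langle k,\theta\rangle}$ with $n=\#\cA$, this yields $|\p_\r^j\hat f_r(r,k,\r)|\lsim e^{-\s|k|}\,|f^T|_{\s,\mu;\ga,\vark,\D}$ for $|r|<\mu$ and $|j|\le s_*$.

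Next I would introduce the non-resonant parameter set. With $L_k(\r)=\langle k,\om(\r)\rangle$, put
\[
\D_1=\D_1(h,\ka,N):=\D\setminus\bigcup_{0<|k|\le N}\Sigma(L_k,\ka),
\]
which is a finite intersection of closed sets, hence closed, and satisfies $|L_k(\r)|\ge\ka$ on it for every $0<|k|\le N$. Since $\de\le\de_0$ by \eqref{ass1} and the convention \eqref{Conv}, Lemma \ref{lSmallDiv1} delivers both the measure bound $\Leb(\D\setminus\D_1)\le CN^{\exp}\frac{\chi+\de}{\de_0}\frac{\ka}{\de_0}$ and the separation $\dist\big(\{|L_k|\ge\ka\},\{|L_k|<\ka/2\}\big)\gsim\ka/(N(\chi+\de))$. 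Using that separation I would fix, for each $0<|k|\le N$, a cut-off $\psi_k\in\Ca^\infty(\D)$ with $\psi_k\equiv1$ near $\D_1$, $\mathrm{supp}\,\psi_k\subset\{|L_k|\ge\ka/2\}$ and $|\p_\r^j\psi_k|\lsim\big(N(\chi+\de)/\ka\big)^{|j|}$, and set $\ell_k:=\psi_k/L_k$. Then $\ell_k$ is of class $\Ca^\infty$ on $\D$ (since $L_k$ does not vanish on $\mathrm{supp}\,\psi_k$) and $\ell_k\equiv1/L_k$ on $\D_1$; moreover, using $|L_k|\ge\ka/2$ and $|\p_\r^iL_k|\le|k|\,|\nabla_\r\om|_{\Ca^{s_*-1}(\D)}\le N(\chi+\de)$ on $\mathrm{supp}\,\psi_k$, an induction on $|m|$ gives $|\p_\r^m(1/L_k)|\lsim\frac1\ka\big(N(\chi+\de)/\ka\big)^{|m|}$ there, whence by Leibniz $|\p_\r^j\ell_k|\lsim\frac1\ka\big(N(\chi+\de)/\ka\big)^{|j|}$.

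Finally I would set
\[
S_r(r,\theta,\r)=\sum_{0<|k|\le N}(-{\mathbf i})\,\ell_k(\r)\,\hat f_r(r,k,\r)\,e^{{\mathbf i}\langle k,\theta\rangle},\qquad R_r(r,\theta,\r)=\sum_{|k|>N}\hat f_r(r,k,\r)\,e^{{\mathbf i}\langle k,\theta\rangle},
\]
both holomorphic in $(r,\theta)$, of class $\Ca^{s_*}$ in $\r$, and affine in $r$ like $f_r$. Since $\p_{\om(\r)}$ acts only on $\theta$, with $\p_{\om(\r)}e^{{\mathbf i}\langle k,\theta\rangle}={\mathbf i}L_k(\r)\,e^{{\mathbf i}\langle k,\theta\rangle}$, on $\D_1$ — where $\ell_kL_k\equiv1$ — one gets $\p_{\om(\r)}S_r=\sum_{0<|k|\le N}\hat f_r(r,k,\r)\,e^{{\mathbf i}\langle k,\theta\rangle}=f_r-\hat f_r(r,0,\r)-R_r$, which is \eqref{homo1}. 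The bounds \eqref{homo1S}--\eqref{homo1R} then follow from Leibniz in $\r$, the estimates just obtained for $\p_\r^j\ell_k$ and $\p_\r^j\hat f_r$, the inequality $|e^{{\mathbf i}\langle k,\theta\rangle}|\le e^{|k|\s'}$ on $\{|\Im\theta|<\s'\}$, and the elementary sums $\sum_{k\in\Z^n}e^{-t|k|}\lsim t^{-n}$ and $\sum_{|k|>N}e^{-t|k|}\lsim t^{-n}e^{-tN}$ with $t=\s-\s'$; the dependence of $\exp$ on $n=\#\cA$ is inherited from Lemma \ref{lSmallDiv1} and from these sums, and $C$ stays absolute.

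The computation holds no real surprise. The one point that will need some care is the passage from a function defined and estimated a priori only on the Cantor set $\D_1$ to one defined on all of $\D$; this is exactly what the cut-offs $\psi_k$ accomplish, and they are available only because of the distance estimate in Lemma \ref{lSmallDiv1}. Everything else — the Fourier bookkeeping, and keeping the factor $\big(N(\chi+\de)/\ka\big)^{|j|}$ uniform for $|j|\le s_*$ in the division estimate for $1/L_k$ — is routine.
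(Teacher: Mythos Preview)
Your proposal is correct and follows essentially the same route as the paper: Fourier expansion in $\theta$, truncation at $|k|\le N$, control of $L_k^{-1}$ via Lemma~\ref{lSmallDiv1}, and smooth extension to all of $\D$ by multiplying each Fourier coefficient by a cut-off supported where $|L_k|\ge\ka/2$ (the paper invokes the extension Lemma~\ref{lExtension} for this, which is exactly your $\psi_k$). The only cosmetic difference is that the paper first writes $\hat S(k,\r)=-{\mathbf i}L_k(\r)^{-1}\hat F(k,\r)$ and then multiplies by the cut-off $g_k$, whereas you package this as $\ell_k=\psi_k/L_k$ from the start; the resulting estimates and the set $\D_1$ are identical.
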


\begin{proof} Written in Fourier components the 
equation \eqref{homo1} then becomes, for $k\in \Z^{\cA}$,
$$L_k(\r)\hat S(k)=:
 \langle k, \om(\r) \rangle \hat S(k)=-{\mathbf i}(\hat F(k)-\hat R(k))$$
where we have written $S,F$ and $ R$ for $S_r, (f_r-\hat f_r)$ and $R_r$ respectively.
Therefore \eqref{homo1} has the (formal) solution 
$$S(r,\theta,\r)=\sum\hat S (r,k,\r) e^{{\mathbf i}\langle k,\theta\rangle}\quad\textrm{and}\quad
R(r,\theta,\r)=\sum\hat F (r,k,\r) e^{{\mathbf i}\langle k,\theta\rangle}$$
with
$$\hat S(r,k,\r)= 
\left\{\begin{array}{ll}
-L_k(\r)^{-1}{\mathbf i}\hat F(r,k,\r) & \textrm{ if }  0< |k|\le N\\
0 & \textrm{ if not}  
\end{array}\right.$$
and
$$\hat R(r,k,\r)= 
\left\{\begin{array}{ll}
\hat F_{a}(r,k,\r ) & \textrm{ if }   |k|> N\\
0& \textrm{ if not}.
\end{array}\right.$$

By Lemma \ref{lSmallDiv1}
$$ ||(L_k(\r))^{-1}||\le \frac1\ka\,
 $$
for all $\r$ outside some  set $\Sigma(L_k,\kappa)$ such that
$$\dist(\D\setminus \Sigma(L_k,\ka),\Sigma(L_k,\frac\ka2))\ge \cte\frac\ka{N(\chi+\delta)}$$
and
$$\D_1=\D\setminus \bigcup_{0<|k|\le N}\Sigma(L_k,\ka)$$
fulfills the estimate of the lemma.

 For $\r\notin \Sigma(L_k,\frac\kappa2)$ we get
 $$ |\hat S (r,k,\r)| \le \Cte\frac{1}{\ka}|\hat F(r,k,\r)|\,.$$
 Differentiating the formula for $\hat S(r,k,\r)$ once we obtain
 $$\p^j_\r \hat S(r,k,\r)=
 \Big(
 \ -\frac{{\mathbf i}}{ \langle\om, k\rangle}\p^j_\r \hat F(r,k,\r)+
 \ \frac{{\mathbf i}}{ \langle\om, k\rangle^2} \langle \p^j_\r\om, k\rangle\hat F(r,k,\r)\Big)
 $$
 which gives, for $\r\notin \Sigma(L_k,\frac\kappa2)$,
 $$
 |\p^j_\r\hat S(r,k,\r)|\le \Cte \frac1\kappa( N\frac{\chi+\de}{\kappa})\max_{0\le l\le j}|\p^l_\r \hat F(r,k,\r)|.
 $$
(Here we used that $|\p_\r \omega(\rho)|\le \chi+\de$. )
The higher order derivatives are estimated in the same way and this gives
 $$
 |\p_\r^j\hat S(r,k,\r)|\le \Cte \frac1\kappa(N\frac{\chi+\de}{\kappa})^{ |j |}\max_{0\le l\le j} |\p^l_\r\hat F(r,k,\r)|
 $$
 for any $ |j |\le{{s_*}}$,  where $\Cte$ is an absolute constant. 
 
 By Lemma  \ref{lExtension},
there exists a $\cC^\infty$-function $g_k:\D\to\R$, being $=1$ outside $\Sigma(L_k,\ka)$ and $=0$ on
$\Sigma(L_k,\frac\ka2)$ and such that for all $j\ge 0$
$$| g_k |_{\cC^j(\D)}\le (\Cte\frac{N(\chi+\delta)}{\ka})^j.$$
 Multiplying $\hat S(r,k,\r)$ with $g_k(\r)$ gives a $\cC^{{s_*}}$-extension of $\hat S(r,k,\r)$ from 
 $\D\setminus \Sigma(L_k,\ka)$  to  $\D$ satisfying the same bound as $\hat S(r,k,\r)$.

It follows now, by a classical argument,  that the formal solution converge and that
$| \p_\r^j  S(r,\theta,\r)|$ and $|\p_\r^j  R(r,\theta,\r)|$
fulfills the estimates of the lemma.
When summing up  the series for  $|\p_\r^j  R(r,\theta,\r)|$ we get a term $e^ {-\frac1C(\s-\s')N}$, but the factor $\frac1C$
disappears by replacing $N$ by $CN$.  

By construction $S$ and $R$ solve equation \eqref{homo1} for any $\r\in \D_1$.
\end{proof}

\subsection{The second equation}\label{s5.3} 
Concerning the second component  of the homological equation we have

\begin{lemma}\label{prop:homo3}
There exists  an absolut constant $C$ such that if \eqref{ass1} holds, then,
for any  $N\ge1$ and 
$$0<\ka\le c',$$
there exists  a closed set $\D_2=\D_2(h,\ka,N)\subset \D$,  satisfying
 $$
 \Leb (\D\setminus \D_2)\leq   C N^{\exp} 
 \big(\frac{\chi+\delta}{\delta_0}\big) (\frac{\ka}{\delta_0})^{\frac1{{s_*}}}, $$
and there exist $\Ca^{{s_*}}$-functions $S_w$ and $R_w$ on  $\T^\cA  \times \D\to Y_\ga$,  
real holomorphic in $\theta$, such that for $\r\in \D_2$
\be\label{homo2}\p_{\om(\r)} S_w(\theta,\r) -A(\r) JS_w(\theta,\r)=
f_w(\theta,\r)-R_w(\theta,\r)
\ee
and for all $(\theta,\r)\in \T^\cA_{\s'}\times \D$, $\s'<\s$, and $|j|\le {{s_*}}$
 \begin{align} \label{homo2S}
|| \p_\r^j S_w(\theta,\r)||_{\ga}\leq &
 C\frac{1}{\ka (\s-\s')^{n}}\big( N\frac{\chi+\de         }{\ka}\big)^{|j|}
  |f^T|_{\begin{subarray}{c}\s,\mu\ \ \\ \ga, \vark,\D  \end{subarray}} \\ \label{homo2R}
   ||  \p_\r^j R_w(\theta,\r)||_{\ga}\leq & C\frac{ e^{-(\s-\s')N} } {(\s-\s')^{n}}
 |f^T|_{\begin{subarray}{c}\s,\mu\ \ \\ \ga, \vark,\D  \end{subarray}}.
\end{align}

The  exponent $\exp$  only depends on  $\frac{d_*}{\beta_1}$ and $n=\#\cA$, and $C$ is an absolut constant that depends on $c$ and $\sup_\D\ab{\om}$.
\end{lemma}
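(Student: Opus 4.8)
The plan is to solve the second homological equation \eqref{homo2} by passing to Fourier series in $\theta$, just as in the proof of Lemma \ref{prop:homo12}, the new ingredient being that the operator acting on each Fourier mode is now the block operator $L_{k,[a]}=(\langle k,\om\rangle I-JA)_{[a]}$ rather than the scalar $\langle k,\om\rangle$. Writing $S_w(\theta,\r)=\sum_k \hat S(k,\r)e^{\mathbf i\langle k,\theta\rangle}$ and likewise for $f_w$ and $R_w$, the equation decouples into
$$L_k(\r)\hat S(k,\r)={\mathbf i}\big(\hat f_w(k,\r)-\hat R_w(k,\r)\big),\qquad L_k=\langle k,\om\rangle I-AJ,$$
and, since $A$ is block-diagonal over $\E_\Delta$, $L_k$ is itself block-diagonal, so it suffices to invert each block $L_{k,[a]}$ (the scalar $k=0$ on the first coordinate being excluded by setting $\hat S(0,\r)=0$ and moving $\hat f_w(0,\r)$ into $R_w$; note $\hat f_w(0)=0$ anyway when $f^T$ has no $w$-linear term at $r=w=0$, but in any case the mean value is harmless). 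For $|k|>N$ we put everything into the remainder $R_w$, as before.

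The key step is the small-divisor control. First I would invoke Lemma \ref{lSmallDiv2}: for $L_{k,[a]}=(\langle k,\om\rangle I-JA)_{[a]}$ there is a set $\D_2=\D\setminus\bigcup_{0<|k|\le N,[a]}\Sigma(L_{k,[a]},\ka)$ whose complement has measure $\le CN^{\exp}\frac{\chi+\de}{\de_0}(\frac{\ka}{\de_0})^{1/{s_*}}$, and on which $\aa{L_{k,[a]}(\r)^{-1}}\le 1/\ka$; moreover Lemma \ref{lSmallDiv2} gives the gap estimate $\dist(\D\setminus\Sigma(L_{k,[a]},\ka),\Sigma(L_{k,[a]},\frac\ka2))\ge\frac1C\frac{\ka}{N(\chi+\de)}$, which is exactly what is needed to apply the $\cC^\infty$ cutoff from Lemma \ref{lExtension} and produce a $\cC^{s_*}$-extension of $\hat S(k,\r)$ from $\D\setminus\Sigma(L_{k,[a]},\ka)$ to all of $\D$, with $\cC^j$-norm of the cutoff bounded by $(CN(\chi+\de)/\ka)^j$. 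The finiteness of the union over $[a]$ is guaranteed by the spectral asymptotics \eqref{la-lb-ter}: since $|\langle k,\om\rangle|\lsim N$ and the eigenvalues of $Q_{[a]}$ are $\approx|a|^{\beta_1}=|a|^2$, the block $L_{k,[a]}$ is automatically $\ka$-invertible once $|a|\gsim N^{1/\beta_1}$, so only $\lsim N^{d_*/\beta_1}$ values of $[a]$ contribute — hence the stated exponent depending on $d_*/\beta_1$ and $\#\cA$.

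Given invertibility, the estimates \eqref{homo2S}–\eqref{homo2R} follow by the same routine as in Lemma \ref{prop:homo12}. For $\r\notin\Sigma(L_{k,[a]},\frac\ka2)$ one has $\aa{\hat S(k,\r)}\le\frac{C}{\ka}\aa{\hat f_w(k,\r)}$; differentiating $\hat S(k,\r)=-{\mathbf i}L_{k,[a]}(\r)^{-1}\hat f_w(k,\r)$ and using $d(L^{-1})=-L^{-1}(dL)L^{-1}$ together with $\aa{\p_\r L_{k,[a]}}\le N(\chi+\de)$ (from $|\p_\r\om|\le\chi+\de$ and \eqref{hypoB}) yields, by induction on $|j|$,
$$\aa{\p_\r^j\hat S(k,\r)}\le C\frac1\ka\Big(N\frac{\chi+\de}{\ka}\Big)^{|j|}\max_{0\le l\le j}\aa{\p_\r^l\hat f_w(k,\r)},$$
and the cutoff multiplication preserves this bound. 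Summing the Fourier series and using the analyticity of $f_w$ in $\theta$ (which gives exponential decay of $\aa{\p_\r^l\hat f_w(k,\cdot)}$ in $|k|$, controlled by $|f^T|_{\s,\mu;\ga,\vark,\D}$) produces the factor $(\s-\s')^{-n}$ in \eqref{homo2S}, while the tail $|k|>N$ produces the factor $e^{-(\s-\s')N}$ in \eqref{homo2R} (after replacing $N$ by $CN$ to absorb the constant in the exponent, exactly as in Lemma \ref{prop:homo12}). That $S_w$ and $R_w$ take values in $Y_\ga$ rather than $Y_{-\ga}$ is where condition R1 on $f$ (so that $f_w(\theta,\r)\in Y_\ga$) is used.

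The main obstacle is really the bookkeeping around the block structure: one must be sure that $L_k$ genuinely respects the decomposition $\E_\Delta$ (it does, because $A\in\NF_\Delta$ is block-diagonal and $J$ acts within each $2\times2$ block), that the union over blocks is finite (via \eqref{la-lb-ter}), and that Lemma \ref{lExtension}'s cutoff can be applied blockwise with uniform constants. None of this is deep — the genuinely delicate small-divisor analysis has already been discharged in Lemma \ref{lSmallDiv2} — so the proof is, as the authors say, a routine adaptation of Lemma \ref{prop:homo12}.
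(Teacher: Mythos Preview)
Your overall strategy matches the paper's proof: Fourier-expand in $\theta$, use the block structure of $A$ to reduce to the operators $L_{k,[a]}$, invoke Lemma~\ref{lSmallDiv2} for the small-divisor sets, extend via the cutoff of Lemma~\ref{lExtension}, and sum. The routine part is fine.

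There is, however, a genuine gap in your treatment of the mode $k=0$. You propose to set $\hat S(0,\r)=0$ and absorb $\hat f_w(0,\r)$ into $R_w$, claiming this is ``harmless''. It is not: the remainder estimate \eqref{homo2R} carries the factor $e^{-(\s-\s')N}$, and the zero mode $\hat f_w(0,\r)$ has no such decay in $N$, so putting it into $R_w$ would destroy \eqref{homo2R}. Your parenthetical remark that ``$\hat f_w(0)=0$ anyway when $f^T$ has no $w$-linear term at $r=w=0$'' is also mistaken: the jet $f^T$ \emph{does} contain the term $\langle f_w(\theta),w\rangle=d_wf(0,\theta,0)[w]$, and there is no reason for its $\theta$-average to vanish. (You may be thinking of Lemma~\ref{prop:homo12}, where the zero mode $\hat f_r(r,0,\r)$ is split off --- but there it goes into the new normal form $h_+$, not into the remainder.)

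The paper's proof handles $k=0$ differently: it \emph{solves} for $\hat S(0,\r)$ directly. The operator $L_{0,[a]}$ is just $(-JA)_{[a]}$ (or $Q_{[a]}$ in the complex variables the paper uses), and by \eqref{laequiv} together with the normal-form bound \eqref{hypoB} and assumption \eqref{ass1} one has $\|L_{0,[a]}(\r)^{-1}\|\le 1/c'$ for \emph{all} $\r\in\D$, with no parameter exclusion. This is precisely why the hypothesis $0<\ka\le c'$ appears in the statement of the lemma: it guarantees $1/c'\le 1/\ka$, so the $k=0$ estimate matches the $k\ne0$ estimate and the proof goes through uniformly. You never use the hypothesis $\ka\le c'$, which is a signal that something is missing.

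A minor stylistic difference: the paper first passes to the complex variables $(\xi,\eta,z_\F)$ via the transform $w=Uz$, which diagonalises the symplectic structure and makes the $\L_\infty$-blocks Hermitian, so that the equation visibly decouples into three scalar-type equations. You work directly with $L_k=\langle k,\om\rangle I-AJ$, which is equivalent since Lemma~\ref{lSmallDiv2} is already stated for that operator; either presentation is fine.
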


\proof
Let us re-write \eqref{homo2} in the complex variables  $\xi$ and $\eta$ described in section
\ref{ssUnperturbed}. The quadratic form  $(1/2)\langle w, A(\rho) w\rangle\ $ gets transformed, by $w=Uz$,  to
$$ \langle\xi, Q(\rho)\eta\rangle + \frac12 \langle z_{\F}, H'(\rho) z_{\F}\rangle,$$ 
where $Q'$ is a Hermitian matrix and  $H'$ is a real symmetric matrix. Then we make in \eqref{homo2} the 
substitution   $ S={}^t\!US_w$, $R={}^t\! UR_w$ 
and $F={}^t\!Uf_w$, where $S={}^t(S^\xi, S^\eta,S^{\F})$, etc. 
In this notation eq.~\eqref{homo2}  decouples into the equations 
\begin{align*}
 \p_{\om}  S^\xi + {\mathbf i} QS^\xi= F^\xi-R^\xi,\\
 \p_{\om}  S^\eta-  {\mathbf i}{}^t\!QS^\eta= F^\eta-R^\eta\\
\p_{\om} S^{\F}-  HJS^{\F}= F^{\F}-R^{\F}.
\end{align*}

Let us consider the first equation. Written in the  Fourier components it becomes
\be\label{homo2.10}
( \langle k, \om(\r) \rangle I  + Q) \hat S^\xi (k)=-{\mathbf i}(\hat F^\xi(k)-\hat R^\xi(k)).\ee
This equation decomposes into its ``components'' over the  blocks $[a]=[a]_\Delta$ and takes the form
\be\label{homo2.1}L_{k,[a]}(\r)\hat S_{[a]}(k)=:
( \langle k, \om(\r) \rangle  + Q_{[a]}) \hat S_{[a]}(k)=-{\mathbf i}(\hat F_{[a]}(k)-\hat R_{[a]}(k))\ee
--  the matrix $Q_{[a]}$ being the restriction of $Q^\xi$ to $[a]\times [a]$, the vector $F_{[a]}$ being 
is the restriction of $F^\xi$ to $[a]$ etc. 

Equation  \eqref{homo2.10} has the (formal) solution
$$\hat S_{[a]}(k,\r)= 
\left\{\begin{array}{ll}
-(L_{k,[a]}(\r))^{-1}{\mathbf i}\hat F_{[a]}(k,\r) & \textrm{ if }   |k|\le N\\
0 & \textrm{ if not}  
\end{array}\right.$$
and 
$$\hat R_{a}(k,\r)= 
\left\{\begin{array}{ll}
\hat F_{a}(k,\r ) & \textrm{ if }   |k|> N\\
0& \textrm{ if not}.
\end{array}\right.$$

For $k\not=0$, by Lemma \ref{lSmallDiv2},
$$ ||(L_{k,[a]}(\r))^{-1}||\le \frac1\ka\,
 $$
for all $\r$ outside some  set $\Sigma(L_{k,[a]},\kappa)$ such that
$$\dist(\D\setminus \Sigma(L_{k,[a]},\ka),\Sigma(L_{k,[a]},\frac\ka2))\ge \cte\frac\ka{N(\chi+\delta)}$$
and
$$\D_2=\D\setminus \bigcup_{\begin{subarray}{c} 0< |k|\le N\\ [a]\end{subarray}}\Sigma_{k,[a]}(\ka),$$
fulfills the required estimate. For $k=0$, it follows by \ref{ass1} and \ref{laequiv} that
$$ ||(L_{k,[a]}(\r))^{-1}||\le \frac1{c'}\le \frac1{\ka}\, . $$

We then get, as in the proof of Lemma \ref{prop:homo12}, that $\hat S_{[a]}(k,\cdot)$ and 
$\hat R_{[a]}(k,\cdot)$ have  $\cC^{{s_*}}$-extension to $\D$ satisfying
$$|| \p_\r^j \hat S_{[a]} (k,\r)|| \leq \Cte\frac{1}{\ka}\big(N\frac{\chi+\de}{\ka}\big)^{|j|}
\max_{0\le l\le j} || \p_\r^l \hat F_{[a]}(k,\r)||
$$
and
$$||  \p_\r^j R_{[a]}(k,\r)||\leq  \Cte || \p_\r^j \hat F_{[a]}(k,\r)||,$$
and satisfying \eqref{homo2.1} for $\r\in\D_2$.

These estimates imply that
$$|| \p_\r^j \hat S^\xi(k,\r)||_\ga\leq \Cte\frac{1}{\ka}\big(N\frac{\chi+\de}{\ka}\big)^{|j|}
\max_{0\le l\le j} || \p_\r^l \hat F^\xi (k,\r)||_\ga
$$
and
$$||  \p_\r^j R^\xi (k,\r)||\leq  \Cte || \p_\r^j F^\xi(k,\r)||_\ga.$$
Summing up the Fourier series, as in Lemma \ref{prop:homo12}, we get
$$|| \p_\r^j S^\xi (\theta,\r)||_\ga\leq \Cte\frac{1}{\ka (\s-\s')^{n}}\big(N\frac{\chi+\de}{\ka}\big)^{|j|}
\max_{0\le l\le j} \sup_{|\Im\theta|<\s}|| \p_\r^l F^\xi(\cdot,\r)||_\ga
$$
and
$$||  \p_\r^j R^\xi(\theta,\r)||_\ga \leq  \Cte\frac{ e^{-\frac1{\Cte}(\s-\s')N} } {(\s-\s')^{n}}
\sup_{|\Im\theta|<\s}|| \p_\r^j F^\xi(\cdot,\r)||_\ga
$$
for $(\theta,\r)\in \T^\A_{\s'}\times \D$, $0<\s'<\s$, and $|j|\le{{s_*}}$. This implies the estimates \eqref{homo2S} and \eqref{homo2R}  --  the factor $\frac1{\Cte}$
disappears by replacing $N$ by $\Cte N$. 

The other two equations are treated in exactly the same way.

\endproof

\subsection{The third equation}\label{s5.4}
 Concerning the third component of the homological equation, \eqref{eqHomEq}, we have the following result, 
 where for a solution $S_{w w}(\theta,\rho)$ we estimate separately its mean-value
 $\hat S_{w w}(0,\rho)$ and the deviation from the mean-value 
 $S_{ww}(\theta,\rho) - \hat S_{ww}(0,\rho)$.
 
\begin{lemma}\label{prop:homo4}
There exists  an absolut constant $C$ such that if \eqref{ass1} holds, then,
for any  $N\ge1$, $\Delta'\ge \Delta\ge 1$,  and 
$$\ka\le\frac1C c',$$
there exist subsets $\D_3=\D_3(h, \ka,N,\Delta')\subset \D$, satisfying 
$$\Leb(\D\setminus {\D_3})\le C N^{\exp}\big(\frac{\chi+\delta}{\delta_0}\big) (\frac{\ka }{\de_0})^{\alpha}$$
and there exist real $\Ca^{{s_*}}$-functions 
$B_{ww}: \D\to \cM_{\ga,\varkappa} \cap \NF_{\Delta'} $ and 
$S_{ww}$, $R_{ww}:\T^{\cA}\times \D\to \cM_{\ga,\varkappa}$,
real holomorphic  in $\theta$, such that for all $\r\in\D_3$
\begin{multline}\label{homo3} 
\p_{\om(\r)} S_{ww}(\theta,\r) -A(\r)JS_{ww}(\theta,\r)+
S_{ww}(\theta,\r)JA(\r)=\\
f_{ww} (\theta,\r)-B_{ww}(\r)-R_{ww}(\theta,\r)
 \end{multline}
and for all $(\theta,\r)\in \T^\cA_{\s'}\times \D$, $\s'<\s$, and $|j|\le {{s_*}}$

\be\label{homo3S}
\aa{\p_\r^j S_{ww}(\theta,\r)}_{\ga',\varkappa}\leq 
C\Delta'\frac{\Delta^{\exp_2}e^{2\ga d_\Delta}}{\ka(\s-\s')^{n}}
\big(N\frac{\chi+\de}{\ka}\big)^{|j|}
\ab{f^T}_{\begin{subarray}{c}\s,\mu\ \ \\ \ga, \vark,\D  \end{subarray}},\ee


\be\label{homo3R}
\aa{ \p_\r^j  R_{ww}(\theta,\r)}_{\ga',\vark}\leq  
C\Delta' \Delta^{\exp_2}\left(\frac{e^{-(\s-\s')N}+e^{-(\ga-\ga')\Delta'}}{ (\s-\s')^{n}}\right)
\ab{f^T}_{\begin{subarray}{c}\s,\mu\ \ \\ \ga, \vark,\D  \end{subarray}},\ee

\be \label{homo3B}
\aa{\p_\r^j B_{ww}(\r)}_{\ga',\vark}\leq  C \Delta' \Delta^{\exp_2} \ab{f^T}_{\begin{subarray}{c}\s,\mu\ \ \\ \ga, \vark,\D  \end{subarray}},\ee
for any $\ga_*\le \ga'\le\ga$.

The exponent $\exp$ only depends  on $\frac{d_*}{\beta_1} $ and $n=\#\cA$. The exponent $\exp_2$ only depends  
on $d_*,m_*$. The exponent
$\alpha$ is a positive constant only depending on  $s_*,\frac{d_*}{\vark} ,\frac{d_*}{\beta_3} $.
$C$ is an absolut constant that depends on $c$ and $\sup_\D\ab{\om}$.

\end{lemma}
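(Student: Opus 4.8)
The plan is to repeat the scheme of Lemmas \ref{prop:homo12} and \ref{prop:homo3}, the three new features being that the unknown $S_{ww}$ is matrix‑valued, that the relevant operator is now the conjugation‑type operator $\p_{\om}-\cAd_{JA}$ acting on matrices, and that beyond the Fourier truncation at $|k|\le N$ one must truncate in block‑distance at $\Delta'$ and split off a genuine cokernel $B_{ww}$ which has to land precisely on normal form $\NF_{\Delta'}$. First I would diagonalise the symplectic structure by passing to the complex variables $z=(\xi,\eta)$ of \eqref{transf}, $w=Uz$. Since $A\in\NF_\Delta$, the quadratic part of $h$ becomes $\langle\xi,Q\eta\rangle+\tfrac12\langle z_\F,H'z_\F\rangle$ with $Q$ Hermitian, $H'$ real symmetric (the $P$‑block vanishes), and the operator $\p_{\om}-\cAd_{JA}$, written for the transformed unknown, decouples into the scalar conjugation operators $X\mapsto\langle k,\om\rangle X+Q_{[a]}X\pm XQ_{[b]}$, $X\mapsto\langle k,\om\rangle X+Q_{[a]}X+{\mathbf i}XJH$ and $X\mapsto\langle k,\om\rangle X\pm{\mathbf i}JH_{[a]}X\pm{\mathbf i}XJH_{[b]}$ — exactly the operators $L_{k,[a],[b]}$ of Section \ref{s6}, with $[a],[b]$ running over the \emph{fine} blocks of $\E_\Delta$ (on which $A$ is block diagonal, so $\cAd_{JA}$ preserves the fine block‑support). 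Taking the Fourier series in $\theta$ reduces \eqref{homo3} to the finite‑dimensional linear equations $L_{k,[a],[b]}(\r)\,\hat S_{[a]}^{[b]}(k)=-{\mathbf i}\hat F_{[a]}^{[b]}(k)$, one per triple $(k,[a],[b])$.

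\textbf{Trichotomy of modes.} I would then classify $(k,[a],[b])$ into three groups. \emph{(a) Resonant modes}: $k=0$ together with $[a]_{\Delta'}=[b]_{\Delta'}$, in the $Q$–$Q$ conjugation blocks (where $|a|=|b|$, so by \eqref{la-lb} the eigenvalue differences $\la_a-\la_b$ are not bounded below) and in the $\F$–$\F$ block (the kernel of $\cAd_{{\mathbf i}JH}$). These are put \emph{unchanged} into $B_{ww}$. Because on the $\L_\infty$ blocks only the $\xi\eta$ component survives (the $\xi\xi$ and $\eta\eta$ conjugation operators have eigenvalues $\la_a+\la_b\ge 2c'$, hence are always solved, see below) and because $Q,H'$ are real and symmetric, $B_{ww}$ is automatically real, symmetric, block diagonal over $\E_{\Delta'}$ and $\Pi$‑invariant on $\L_\infty$, i.e. $B_{ww}\in\NF_{\Delta'}$; its norm is controlled by restriction of $f_{ww}$. \emph{(b) Far modes}: $[a-b]>\Delta'$, or $|k|>N$. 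These go into $R_{ww}$; the first set contributes $\lesssim e^{-(\ga-\ga')\Delta'}|f^T|$ in the $\ga'$‑norm thanks to the factor $e^{\ga_1[a-b]}$ in the weight $e_{\ga,\vark}$ (Lemma \ref{lWeights}), the second $\lesssim e^{-(\s-\s')N}|f^T|$ in the $\T^\cA_{\s'}$‑norm by the usual Paley–Wiener estimate, uniformly in $\r$. \emph{(c) The rest}: $0<|k|\le N,\ [a-b]\le\Delta'$, together with $k=0,\ [a-b]\le\Delta'$ but $[a]_{\Delta'}\neq[b]_{\Delta'}$ — in which case necessarily $|a|\neq|b|$, so by the first lines of \eqref{laequiv}, \eqref{la-lb-bis} (and their $\F$‑counterparts) $L_{0,[a],[b]}$ is $c'$‑invertible, hence $\ka$‑invertible since $\ka\le c'/C$, with no removal of parameters. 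For $k\neq0$ I invert $L_{k,[a],[b]}$ via Lemmas \ref{lSmallDiv2}, \ref{lSmallDiv3}: outside $\bigcup_{0<|k|\le N,\,[a-b]\le\Delta'}\Sigma(L_{k,[a],[b]},\ka)$, of measure $\le CN^{\exp}(\frac{\chi+\de}{\de_0})(\frac{\ka}{\de_0})^{\alpha}$, one has $\aa{L_{k,[a],[b]}^{-1}}\le\ka^{-1}$, and the distance estimate of those lemmas, combined with the Whitney cut‑off of Lemma \ref{lExtension}, extends $\hat S_{[a]}^{[b]}(k,\cdot)$ to a $\Ca^{s_*}$ function on $\D$ losing only powers of $N(\chi+\de)/\ka$ per $\r$‑derivative. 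One sets $\D_3=\D$ minus this bad set.

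\textbf{Assembly and estimates.} Finally I would reassemble $S_{ww},R_{ww},B_{ww}$ from their block–Fourier components and transform back to the $w$‑variables by the inverse of the substitution above (a per‑site bounded change, absolute constants only). Summing the Fourier series exactly as in Lemma \ref{prop:homo12}, replacing $N$ by $CN$ to absorb the $1/C$ in the exponent, gives \eqref{homo3R} and the $\theta$‑analytic bounds; summing over the fine blocks lying within block‑distance $\Delta'$ — there are $\lesssim(\Delta')^{d_*}$ of them, each of cardinality $\lesssim d_\Delta^{d_*}$, with $d_\Delta\le C\Delta^{(d_*+1)!/2}$ by Proposition \ref{blocks} — and converting the operator‑norm control of $L_{k,[a],[b]}^{-1}$ into the entry‑wise weighted bounds entering $|\cdot|_{\ga',\vark}$ produces the polynomial factor $\Delta'\Delta^{\exp_2}$ (from the block cardinalities and the $\max([a-b],1)^{m_*}$ weight, so $\exp_2$ depends only on $d_*,m_*$) and, in \eqref{homo3S}, the factor $e^{2\ga d_\Delta}$ (because the weight $e^{\ga_1[a-b]}$ varies by $e^{\pm\ga_1 d_\Delta}$ across a fine block, and the truncation at $[a-b]\le\Delta'$ is taken at the fine‑block level). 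The exponents $\exp,\exp_2,\alpha$ are inherited from Lemmas \ref{lSmallDiv2}–\ref{lSmallDiv3}, Proposition \ref{blocks} and the definition of $e_{\ga,\vark}$. The $\r$‑derivatives are handled as in Lemma \ref{prop:homo12}.

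\textbf{Main obstacle.} The delicate point is the bookkeeping in step (c) together with the assembly: the operator $L_{k,[a],[b]}$ is controlled only in operator norm, whereas $|\cdot|_{\ga',\vark}$ is an entry‑wise weighted $\ell^1$‑type norm, and passing between the two on a fine block of diameter $d_\Delta$ is exactly what forces the (harmless but unavoidable) factor $e^{2\ga d_\Delta}$ and the polynomial $\Delta$‑losses; simultaneously one must verify that the cokernel $B_{ww}$ lands in $\NF_{\Delta'}$ and not merely in $\NF_\Delta$, which is why the distance truncation must be taken at $\Delta'$ (not $\Delta$) and why it is essential that for $|a|=|b|$ with $[a]_{\Delta'}\neq[b]_{\Delta'}$ the spectral gap of A1, \eqref{la-lb-bis}, still makes those off‑block modes solvable.
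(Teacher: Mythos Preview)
Your proposal is correct and follows essentially the same route as the paper: pass to complex variables via $w=Uz$, decouple the conjugation operator into the $\xi\xi$, $\xi\eta$, $\xi z_\F$, $z_\F z_\F$ components, expand in Fourier and over the fine $\Delta$-blocks, invert $L_{k,[a],[b]}$ on the good set furnished by Lemma~\ref{lSmallDiv3}, truncate in $|k|$ and in block-distance, and put the unresolvable $k=0$, same-$\Delta'$-block, $\xi\eta$ (and $\F$--$\F$) modes into $B_{ww}$. Your identification of why $B_{ww}\in\NF_{\Delta'}$ (only the $\xi\eta$ component survives on $\L_\infty$, giving $\Pi$-invariance) and of the origin of the factors $e^{2\ga d_\Delta}$ and $\Delta'\Delta^{\exp_2}$ (conversion between operator norm on a fine block and the weighted $\ell^1$ norm) matches the paper's explanation.

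Two minor points: only Lemma~\ref{lSmallDiv3} is needed here (not \ref{lSmallDiv2}); and the paper organises the argument equation-by-equation rather than by your resonant/far/solvable trichotomy, but this is purely presentational. Your observation that $[a-b]\le\Delta'$, $[a]_{\Delta'}\ne[b]_{\Delta'}$ forces $|a|\ne|b|$ on $\L_\infty$ (so that \eqref{la-lb-bis} applies at $k=0$) is exactly the mechanism the paper uses, though the paper phrases the $k=0$ discussion as a direct case split on $|a|=|b|$ versus $|a|\ne|b|$.
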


\proof It is also enough to find complex solutions $S_{ww}$, $R_{ww}$
and $B_{ww}$ verifying the estimates, because then their real parts will do the job.

As in the previous section, and using the same notation, we re-write \eqref{homo3} in complex variables. 
So we introduce 
$S={}^t\! US_{\zeta,\zeta} U$, $R={}^t\!  UR_{\zeta,\zeta} U$, $B={}^t\! UB_{\zeta,\zeta} U$ and $F={}^t\! UJf_{\zeta,\zeta} U$.
In appropriate notations  \eqref{homo3}  decouples into  the equations
\begin{align*}
&\p_{\om}  S^{\xi\xi} +{\mathbf i} Q S^{\xi\xi}+ {\mathbf i} S^{\xi\xi}\ {}^tQ= F^{\xi\xi}-B^{\xi\xi}- R^{\xi\xi},\\ 
&\p_{\om}  S^{\xi\eta}  + {\mathbf i}Q S^{\xi\eta} -  {\mathbf i}S^{\xi\eta} Q= F^{\xi\eta}-B^{\xi\eta}- R^{\xi\eta},\\
&\p_{\om}  S^{\xi z_{\F}} +{\mathbf i} Q S^{\xi z_{\F}}+  S^{\xi z_{\F}}\ JH
= F^{\xi z_{\F}}-B^{\xi\xi}- R^{\xi z_{\F}},\\ 
&\p_{\om}  S^{z_{\F}z_{\F}} 
+HJ S^{z_{\F}z_{\F}}- S^{z_{\F}z_{\F}}JH= F^{z_{\F}z_{\F}}-
B^{z_{\F}z_{\F}}- R^{z_{\F}z_{\F}},
\end{align*}
and  equations for $ S^{\eta\eta},S^{\eta\xi}, S^{z_{\F}\xi }, S^{\eta z_{\F}},S^{z_{\F} \eta}$. Since those latter equations are of the same type as the first four, we shall concentrate on these first.

\smallskip

{\it First equation. } 
 Written in the  Fourier components it becomes
\be\label{homo3.10}
( \langle k, \om(\r) \rangle I  + Q) \hat S^{\xi\xi} (k)+\hat S^{\xi\xi} (k){}^tQ
=-{\mathbf i}(\hat F^{\xi\xi}(k)-\de_{k,0} B-\hat R^{\xi\xi}(k)).\ee
This equation decomposes into its ``components'' over the  blocks $[a]\times[b]$, $[a]=[a]_\Delta$,
and takes the form
\begin{multline}\label{homo3.1}
L(k,[a],[b],\r)\hat S_{[a]}^{[b]}(k)=:\langle k, \om(\r)\rangle \ \hat S_{[a]}^{[b]}(k) + Q_{[a]}(\r) \hat S_{[a]}^{[b]}(k)+ \\
\hat S_{[a]}^{[b]}(k)\  {}^tQ_{[b]}(\r)=
-{\mathbf i}(\hat F_{[a]}^{[b]}(k,\r) -\hat R_{[a]}^{[b]}(k)-\de_{k,0} B_{[a]}^{[b]})
\end{multline}
--  the matrix $Q_{[a]}$ being the restriction of $Q^{\xi\xi}$ to $[a]\times [a]$, the vector $F_{[a]}^{[b]}$ being 
the restriction of $F^{\xi\xi}$ to $[a]\times[b]$ etc. 

Equation \eqref{homo3.10} has the (formal)  solution
$$\hat S_{[a]}^{[b]}(k,\r)=
\left\{\begin{array}{ll}
-L(k,[a],[b],\r)^{-1}{\mathbf i}\hat F_{[a]}^{[b]}(k,\r) &
\textrm{ if } \dist([a],[b])\le\Delta'\ \textrm{and}\ \  |k|\le N\\
0 &  \textrm{ if not }, 
\end{array}\right.
$$
$$\hat R_{a}^{b}(k,\r)=  
\left\{\begin{array}{ll}
\hat F_{a}^{b}(k,\r )& \textrm{ if } \dist([a],[b])\ge\Delta'\ \textrm{or}\ \ |k|>N\\
0 & \text{ if not}.
\end{array}\right.
$$

For $k\not=0$, by Lemma \ref{lSmallDiv3},
$$ ||(L_{k,[a],[b]}(\r))^{-1}||\le \frac1\ka\,
 $$
for all $\r$ outside some  set $\Sigma_{k,[a],[b]}(\kappa)$ such that
$$\dist(\D\setminus \Sigma_{k,[a],[b]}(\ka),\Sigma_{k,[a],[b]}(\frac\ka2))\ge \cte\frac\ka{N(\chi+\delta)},$$
and
$$\D_3=\D\setminus \bigcup_{\begin{subarray}{c} 0<|k|\le N\\ [a],[b]\end{subarray}}\Sigma_{k,[a],[b]}(\ka)$$
fulfills the required estimate.  For $k=0$, it follows by \ref{ass1} and \ref{laequiv} that
$$ ||(L_{k,[a],[b]}(\r))^{-1}||\le \frac1{c'}\le \frac1{\ka}\, . $$

We then get, as in the proof of Lemma \ref{prop:homo12}, that $\hat S_{[a]}^{[b]}(k,\cdot)$ and 
$\hat R_{[a]}^{[b]}(k,\cdot)$ have  $\cC^{{s_*}}$-extension to $\D$ satisfying
$$|| \p_\r^j \hat S_{[a]}^{[b]} (k,\r)|| \leq \Cte\frac{1}{\ka}\big(N\frac{\chi+\de}{\ka}\big)^{|j|}
\max_{0\le l\le j} || \p_\r^l \hat F_{[a]}(k,\r)||
$$
and
$$||  \p_\r^j R_{a}^{b}(k,\r)||\leq  \Cte || \p_\r^j \hat F_{a}(k,\r)||,$$
and satisfying \eqref{homo3.1} for $\r\in\D_3$.

These estimates imply that, for any $\ga_*\le \ga'\le\ga$,
$$|| \p_\r^j \hat S^{\xi\xi}(k,\r)||_{\cB(Y_{\ga'},Y_{\ga'})}\leq \Cte\Delta'\frac{\Delta^{\exp}e^{2\ga d_\Delta}}{\ka}\big(N\frac{\chi+\de}{\ka}\big)^{|j|}
\max_{0\le l\le j} || \p_\r^l \hat F^{\xi\xi} (k,\r)||_{\cB(Y_{\ga'},Y_{\ga'})} $$
and
$$|| \p_\r^j \hat R^{\xi\xi}(k,\r)||_{\cB(Y_{\ga'},Y_{\ga'})}\leq \Cte\Delta' \Delta^{\exp}  || \p_\r^j \hat F^{\xi\xi} (k,\r)||_{\cB(Y_{\ga'},Y_{\ga'})}.$$
The factor $\Delta^{\exp}e^{2\ga d_\Delta}$ occurs because the diameter of the blocks $\le d_{\Delta}$ interferes
with the exponential decay and influences the equivalence between the $l^1$-norm and the operator-norm.
The factor $\Delta' \Delta^{\exp} $ occurs because the truncation $\lsim \Delta'+ d_{\Delta}$ of diagonal 
influences the equivalence between the sup-norm and the operator-norm.

These estimates gives estimates for the matrix norms and, for any $\ga_*\le \ga'\le\ga$,
$$|| \p_\r^j \hat S^{\xi\xi}(k,\r)||_{\ga,\vark}\leq \Cte\frac{\Delta^{\exp}e^{2\ga d_\Delta}}{\ka}\big(N\frac{\chi+\de}{\ka}\big)^{|j|}
\max_{0\le l\le j} || \p_\r^l \hat F^{\xi\xi} (k,\r)||_{\ga,\vark} $$
and
$$||  \p_\r^j R^{\xi\xi} (k,\r)||_{\ga,\vark}\leq  \Cte || \p_\r^j F^{\xi\xi}(k,\r)||_{\ga,\vark}.$$

Summing up the Fourier series, as in Lemma \ref{prop:homo3}, we get that 
$S^{\xi\xi}(\theta,\r)$ satisfies the estimate \eqref{homo3S} and that
$R^{\xi\xi}(\theta,\r)$ satisfies the estimate \eqref{homo3R}. 

 \smallskip

{\it The third equation. } 
We  write the equation in  Fourier components and decompose it into its
 ``components'' on each product block $[a]\times[b]$, $[b]=\F$:
\begin{multline*}
L(k,[a],[b],\rho) \hat S_{[a]}^{[b]}(k) := \langle k, \om(\r)\rangle \ 
\hat S_{[a]}^{[b]}(k) +Q_{[a]}(\r)\hat S_{[a]}^{[b]}(k) -\\
 {\mathbf i}\hat S_{[a]}^{[b]}(k) JH(\r)= 
-{\mathbf i}( \hat F_{[a]}^{[b]}(k,\r)-\delta_{k,0}B_{[a]}^{[b]}-\hat R_{[a]}^{[b]}(k))
\end{multline*}
--  here we have suppressed the upper index ${\xi z_{\F}}$.

The formal solution is the same as in the previous case and it converges to functions verifying
\eqref{homo3S}, \eqref{homo3R}  and \eqref{homo3B}, by Lemma \ref{lSmallDiv3}, and by \eqref{la-lb-bis}.

 \smallskip

{\it The fourth equation. } 
We  write the equation in  Fourier components:
\begin{multline*}
L(k,[a],[b],\rho) \hat S_{[a]}^{[b]}(k) := \langle k, \om(\r)\rangle \ 
\hat S_{[a]}^{[b]}(k) -  {\mathbf i}HJ(\r)\hat S_{[a]}^{[b]}(k) +\\
 {\mathbf i}\hat S_{[a]}^{[b]}(k) JH(\r)= 
-{\mathbf i}( \hat F_{[a]}^{[b]}(k,\r)-\delta_{k,0}B_{[a]}^{[b]}-\hat R_{[a]}^{[b]}(k)),
\end{multline*}
where $[a]=[b]=\F$  --  here we have suppressed the upper index ${z_{\F} z_{\F}}$.

The equation is solved (formally) by 
$$\hat S_{[a]}^{[b]}(k,\r)= 
\left\{\begin{array}{ll}
-L(k,[a],[b],\r)^{-1} {\mathbf i}\hat F_{[a]}^{[b]}(k,\r) & \textrm{ if }   0<|k|\le N\\
0 & \textrm{ if not} ,
\end{array}\right.
$$
$$\hat R_{[a]}^{[b]}(k,\r)= 
\left\{\begin{array}{ll}
\hat F_{[a]}^{[b]}(k,\r ) & \textrm{ if }   |k|> N\\
0& \textrm{ if not};
\end{array}\right.
$$
and 
$$B^{[b]}_{[a]}(\r)=\hat F_{[a]}^{[b]}(0,\r).
$$

The formal solution now converges a  solution  verifying
\eqref{homo3S}, \eqref{homo3R} and \eqref{homo3B} by Lemma \ref{lSmallDiv3}.

 \smallskip

{\it The second equation. } 
We  write the equation in  Fourier components and decompose it into its
 ``components'' on each product block $[a]\times[b]$:
\begin{multline*}
L(k,[a],[b],\r)\hat S_{[a]}^{[b]}(k)=:\langle k, \om(\r)\rangle \ \hat S_{[a]}^{[b]}(k) + Q_{[a]}(\r) \hat S_{[a]}^{[b]}(k)- \\
\hat S_{[a]}^{[b]}(k)Q_{[b]}(\r)=
-{\mathbf i}(\hat F_{[a]}^{[b]}(k,\r) -\hat R_{[a]}^{[b]}(k)-\de_{k,0} B_{[a]}^{[b]})
\end{multline*}
--  here we have suppressed the upper index $\xi\eta$.
This equation is now  solved (formally) by 
$$
S_{[a]}^{[b]}(\theta,\r)=\sum\hat S_{[a]}^{[b]} (k,\r) e^{{\mathbf i}k\cdot \theta}
\quad\textrm{and}\quad
R_{[a]}^{[b]}(\theta,\r) =\sum\hat R_{[a]}^{[b]} (k,\r) e^{{\mathbf i}k\cdot \theta},$$
with

$$\hat S_{[a]}^{[b]}(k,\r)=
\left\{\begin{array}{ll}
L(k,[a],[b],\r)^{-1}{\mathbf i}\hat F_{[a]}^{[b]}(k,\r) &
\textrm{ if } \dist([a],[b])\le\Delta'\ \textrm{and}\ \ 0< |k|\le N\\
0 &  \textrm{ if not }, 
\end{array}\right.
$$
$$\hat R_{a}^{b}(k,\r)=  
\left\{\begin{array}{ll}
\hat F_{a}^{b}(k,\r )& \textrm{ if } \dist([a],[b])\ge\Delta'\ \textrm{or}\ \ |k|>N\\
0 & \text{ if not}.
\end{array}\right.
$$
 and
$$B_{a}^{b}(\r)=  
\left\{\begin{array}{ll}
\hat F_{a}^{b}(0,\r ) & \textrm{ if } \dist([a],[b])\le\Delta'\ \textrm{and}\ \ k=0\\
0& \text{ if not}.
\end{array}\right.
$$

We have to distinguish two cases, depending on when $k= 0$ or not.

\smallskip

{\it The case $k\not=0$. }  

We have, by Lemma \ref{lSmallDiv3},
$$ ||(L_{k,[a],[b]}(\r))^{-1}||\le \frac1\ka\,
 $$
for all $\r$ outside some  set $\Sigma_{k,[a],[b]}(\kappa)$ such that
$$\dist(\D\setminus \Sigma_{k,[a],[b]}(\ka),\Sigma_{k,[a],[b]}(\frac\ka2))\ge \cte\frac\ka{N(\chi+\delta)},
$$
and
$$\D_3=\D\setminus \bigcup_{\begin{subarray}{c} 0<|k|\le N\\ [a],[b]\end{subarray}}\Sigma_{k,[a],[b]}(\ka)$$
fulfills the required estimate.  

\smallskip

{\it The case $k=0$.}
In this case we consider the block decomposition $\E_{\Delta'}$  and  we distinguish  
whether $|a|=|b|$ or not.

If  $|a|>|b|$, we use   \eqref{ass1} and \eqref{la-lb-bis} to get 
$$|\alpha(\r)-\beta(\r)|\geq c'-\frac{\de}{\langle a\rangle^\varkappa}
-\frac{\de}{\langle b\rangle^\varkappa}\geq \frac{c'}{2}\ge \ka.$$

This estimate allows us to solve the equation by choosing 
$$B_{[a]}^{[b]}=\hat R_{[a]}^{[b]}(0) =0$$
and
$$\hat S_{[a]}^{[b]}(0,\r)= L(0,[a],[b],\r)^{-1}\hat F_{[a]}^{[b]}(0,\r)$$
with 
$$
||\p_\r^j\hat S_{[a]}^{[b]}(0,\r)||\le\Cte  \frac{1}{\ka} (N\frac{\chi+\de}{\kappa})^{ |j |}
\max_{0\le l\le j}\aa{ \p_\r^l\hat F_{[a]}^{[b]}(0,\r)},$$
which implies \eqref{homo3S}.

If $|a|=|b|$, we cannot control $|\alpha(\r)-\beta(\r)|$ from below, 
so then we define
$$\hat S_{[a]}^{[b]}(0)=0$$
and
\begin{align*}
B_{a}^{b}(\r)=\hat F_{a}^{b}(0,\r)) ,\quad \hat R_{a}^{b}(0)  =0\quad &\text{for }  [a]_{\Delta'}= [b]_{\Delta'}\\
 \hat R_{a}^{b}(0,\r)  =\hat F_{a}^{b}(0,\r)\quad B_{a}^{b}=0,\quad& \text{for }  [a]_{\Delta'}\not= [b]_{\Delta'}.
\end{align*}
Clearly $R$ and $B$ verifiy the estimates \eqref{homo3R} and \eqref{homo3B}. 

\smallskip

Hence, the formal solution converges to functions verifying
\eqref{homo3S}, \eqref{homo3R} and \eqref{homo3B} by Lemma \ref{lSmallDiv3}.
Moreover, for $\r\in \D'$, these functions are a solution of the fourth equation.

 \endproof
 
\subsection{The homological equation}

\begin{lemma}\label{thm-homo}
There exists  a constant $C$ such that if \eqref{ass1} holds, then,
for any  $N\ge1$, $\Delta'\ge \Delta\ge 1$  and 
$$\ka\le\frac1C c',$$
there exist subsets $\D'=\D(h, \ka, N)\subset \D$, satisfying 
$$\Leb(\D\setminus {\D'})\le C N^{\exp_1}\big(\frac{\chi+\delta}{\delta_0}\big) (\frac{\ka}{\de_0})^{\alpha}$$
and there exist real jet-functions 
$S, R\in \Tc_{\ga,\varkappa,\D}(\s,\mu)$ and $h_+$
verifying, for $\r\in\D'$,
\be\label{eqHomEqbis}
\{ h,S \}+ f^T= h_++R,\ee
and such that
$$h+h_+\in\NF_{\varkappa}(\Delta',\de_+)$$
and, for all  $0<\s'<\s$,

\be\label{estim-B}
\ab{ h_+}_{\begin{subarray}{c}\s',\mu\ \ \\ \ga, \varkappa,\D  \end{subarray}}\le  X
\ab{f^T}_{\begin{subarray}{c}\s,\mu\ \ \\ \ga, \varkappa,\D  \end{subarray}}
\ee

\begin{equation}\label{estim-S}
\ab{S}_{\begin{subarray}{c}\s',\mu\ \ \\ \ga, \varkappa,\D  \end{subarray}}\\
\leq \frac{1}\ka X (N\frac{\chi+\delta}{\ka})^{{{s_*}}}
\ab{f^T}_{\begin{subarray}{c}\s,\mu\ \ \\ \ga, \varkappa,\D  \end{subarray}}
\end{equation}
  
\be\label{estim-R}
\ab{R}_{\begin{subarray}{c}\s',\mu\ \ \\ \ga', \varkappa,\D  \end{subarray}}\leq 
X\left( e^{-(\s-\s')N}+e^{-(\ga-\ga')\Delta'}\right)
\ab{f^T}_{\begin{subarray}{c}\s,\mu\ \ \\ \ga, \varkappa,\D  \end{subarray}},\ee
for $\ga_*\le\ga'\le \ga$, 
where
$$
X=C\Delta'\big(\frac{\Delta}{\s-\s' }\big)^{\exp_2} e^{2\ga d_\Delta}\max(1,\mu^2).$$

The exponent $\exp_1$ only depends  on $\frac{d_*}{\beta_1} $ and $\#\cA$. The exponent $\exp_2$ only depends  
on $d_*,m_*$  and $\#\cA$. The exponent
$\alpha$ is a positive constant only depending on  $s_*,\frac{d_*}{\vark} ,\frac{d_*}{\beta_3} $.
$C$ is an absolut constant that depends on $c$ and $\sup_\D\ab{\om}$.

\end{lemma}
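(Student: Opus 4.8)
Everything needed has essentially been prepared: the homological equation was split in \S\ref{ssFourComponents} into three linear equations, each of which was solved in Lemmas~\ref{prop:homo12}, \ref{prop:homo3} and \ref{prop:homo4}. The plan is to reassemble the three solutions and collect the estimates. Recall from \S\ref{ssFourComponents} that, writing
\[
f^T=f_r+\langle f_w,w\rangle+\frac12\langle f_{ww}w,w\rangle,\quad
S=S_r+\langle S_w,w\rangle+\frac12\langle S_{ww}w,w\rangle
\]
with $f_r,S_r$ affine in $r$ and $f_{ww},S_{ww}$ symmetric, the relation $\{h,S\}+f^T=h_++R$ is, in view of the explicit form of $\{h,S\}$ given there, equivalent to the three equations
\[
\p_\om S_r=f_r,\quad \p_\om S_w-AJS_w=f_w,\quad \p_\om S_{ww}-AJS_{ww}+S_{ww}JA=f_{ww},
\]
each to be solved modulo a ``cokernel'' (which goes into $h_+$) and an ``error'' (which goes into $R$).

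First I would apply the three lemmas with one common truncation $N$, one common $\ka\le\frac1Cc'$, and (for the third equation) one common $\Delta'$, taking $C$ to be the largest of the constants occurring there so that \eqref{ass1} and the requirement $\ka\le c'$ hold in all three cases. Lemma~\ref{prop:homo12} produces $S_r,R_r$ on a set $\D_1$, with cokernel the $\theta$-mean $\hat f_r(\cdot,0,\cdot)$; Lemma~\ref{prop:homo3} produces $S_w,R_w$ on $\D_2$ with no cokernel; Lemma~\ref{prop:homo4} produces $S_{ww},R_{ww}$ and a matrix $B_{ww}\in\cM_{\ga,\vark}\cap\NF_{\Delta'}$ on $\D_3$. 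Setting $\D'=\D_1\cap\D_2\cap\D_3$, I reassemble $S=S_r+\langle S_w,w\rangle+\frac12\langle S_{ww}w,w\rangle$ and $R=R_r+\langle R_w,w\rangle+\frac12\langle R_{ww}w,w\rangle$, and put $h_+=\hat f_r(r,0,\r)+\frac12\langle \check B_{ww}(\r)w,w\rangle$, where $\check B_{ww}$ is $B_{ww}$ transported back to the real variables $w$ by the map $U$ of \S\ref{ssUnperturbed}. Then \eqref{eqHomEqbis} holds on $\D'$ by construction (the constant term hidden in $\hat f_r(\cdot,0,\cdot)$ is dynamically irrelevant and may be discarded).

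Next come two verifications. To see $S,R\in\Tc_{\ga,\vark,\D}(\s,\mu)$: both are jet-functions, $S$ a Fourier polynomial in $\theta$ and $R$ an analytic Fourier tail (plus a $\theta$-constant), so the three ``components'' of the $\Tc$-norm reduce to the coefficient bounds already obtained, once one notes that $\cM_{\ga',\vark}\subset\cM^b_{\ga',\vark}$ for $\ga_*\le\ga'\le\ga$ (using $\vark\le m_*$ and $\ga'_2=m_*$) and that multiplication by $J$ is bounded on $\cM_{\ga',\vark}$ (Proposition~\ref{pMatrixProduct}); the $r$- and $\theta$-derivatives appearing in $Jdf$ and $Jd^2f$ are handled by Cauchy estimates in the $\mu$- and $(\s-\s')$-margins, which is where the $\max(1,\mu^2)$ and $(\s-\s')^{-\exp_2}$ factors in $X$ originate. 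To see $h+h_+\in\NF_\vark(\Delta',\de_+)$: $h_+$ is a $\theta$-independent jet-function; its part linear in $r$ shifts $\om$ by $\om_+$ with $|\om_+|_{\Ca^{s_*}}\lsim\eps/\mu$, hence $|\om+\om_+-\om_{\mathrm{up}}|_{\Ca^{s_*}}\le\de+C\eps/\mu$; its quadratic part adds $\check B_{ww}$, which is real, symmetric, $\Pi$-invariant and block-diagonal over $\E_{\Delta'}$, so $A+\check B_{ww}\in\NF_{\Delta'}$ (using $\NF_\Delta\subset\NF_{\Delta'}$, valid because $\E_\Delta$ refines $\E_{\Delta'}$ when $\Delta\le\Delta'$); and reading \eqref{homo3B} at $\ga'=\ga_*$ and unwinding the definition of $\ab{\cdot}_{(0,\vark),\vark}$ yields $\aa{\p_\r^j(\check B_{ww})_{[a]}}\lsim\langle a\rangle^{-\vark}\,d_{\Delta'}^{d_*}\Delta'\Delta^{\exp_2}\eps$, so Hypothesis~B holds with some $\de_+$ controlled by $\de$, $\eps$ and these factors.

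Finally I would collect the quantitative bounds: \eqref{estim-B} from the bound on $\hat f_r$ and from \eqref{homo3B} (times $\max(1,\mu^2)$ for the quadratic term); \eqref{estim-S} from \eqref{homo1S}, \eqref{homo2S} and \eqref{homo3S}, the third being the worst because of its extra factor $\Delta'\Delta^{\exp_2}e^{2\ga d_\Delta}$ --- which, together with the $(\s-\s')^{-\exp_2}$ and $\max(1,\mu^2)$ factors, makes up $X$ --- while the $(N\frac{\chi+\de}{\ka})^{s_*}$ comes from taking $|j|=s_*$; \eqref{estim-R} from \eqref{homo1R}, \eqref{homo2R} and \eqref{homo3R}, the last carrying both $e^{-(\s-\s')N}$ and $e^{-(\ga-\ga')\Delta'}$ and controlling $R$ in the $\ga'$-norm for all $\ga_*\le\ga'\le\ga$. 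The measure estimate is just the union bound $\Leb(\D\setminus\D')\le\sum_{i=1}^{3}\Leb(\D\setminus\D_i)$: since $\ka\le\frac1Cc'\le\de_0$, the three powers $(\ka/\de_0)^1$, $(\ka/\de_0)^{1/s_*}$, $(\ka/\de_0)^\alpha$ with $\alpha\le 1/s_*\le1$ are ordered so that the $\alpha$-term dominates, and $N^{\exp}\le N^{\exp_1}$ with $\exp_1$ the largest of the exponents of Lemmas~\ref{lSmallDiv1}--\ref{lSmallDiv3}. I expect the only real work to be this last bookkeeping, together with the check that $h+h_+$ still satisfies Hypothesis~B --- in particular that the new normal-form matrix is genuinely block-diagonal over the coarser partition $\E_{\Delta'}$ and retains the decay $\langle a\rangle^{-\vark}$; there is no conceptual obstacle, the small-divisor analysis having already been carried out in Lemmas~\ref{lSmallDiv1}--\ref{lSmallDiv3}.
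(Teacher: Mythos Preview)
Your proposal is correct and follows essentially the same route as the paper: take $\D'=\D_1\cap\D_2\cap\D_3$, reassemble $S$, $R$ and $h_+$ from the pieces produced by Lemmas~\ref{prop:homo12}--\ref{prop:homo4}, then verify membership in $\Tc_{\ga,\vark,\D}$ by estimating the three components of the norm \eqref{norm} directly on the jet-functions. Your write-up is in fact more explicit than the paper's on a couple of points (the ordering $\alpha\le 1/s_*\le 1$ for the measure estimate, and the check that $A+B_{ww}$ stays in $\NF_{\Delta'}$), but the substance is identical; one minor remark is that $B_{ww}$ in Lemma~\ref{prop:homo4} is already stated in the real variables $w$, so no transport by $U$ is needed.
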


\begin{remark}
The estimates \eqref{estim-B} provides an estimate of $\de_+$. Indeed, for any $a,b\in [a]_{\Delta'}$
$$
\ab{\p_\r^j B_a^b}\le  \frac1 C||\p_\r^j  B||_{\ga,\vark}e_{\ga,\vark}(a,b)^{-1}\le \Cte  (\Delta')^{\vark} 
\ab{f^T}_{\begin{subarray}{c}\s,\mu\ \ \\ \ga, \varkappa,\D  \end{subarray}} \frac1{\langle a\rangle^{\vark}}.
$$
Since $\# [a]_{\Delta'}\le (\Delta')^{\exp}$ we get
$$
 || \p_\r^j B(\r)_{[a]_{\Delta'}} || \le  \Cte(\Delta')^{\exp} 
\ab{f^T}_{\begin{subarray}{c}\s,\mu\ \ \\ \ga, \varkappa,\D  \end{subarray}} \frac1{\langle a\rangle^{\vark}}.
$$
This gives the estimate of $\delta_+-\delta$.
\end{remark}

\begin{proof}
The set $\D'$ will now be given by the intersection of the sets in the three previous lemma of this section.
We set 
$$ h_+(r,w)=\hat f_r(r,0) + \frac 1 2 \langle w, Bw\rangle$$
$$
S(r,\theta,w)=S_r(\theta,r)+\langle S_w(\theta)w\rangle+
\frac 1 2 \langle S_{ww}(\theta)w,w \rangle$$ 
and
$$
R(r,\theta,w)= R_r(r,\theta)+\langle R_w(\theta),w\rangle+
\frac 1 2 \langle R_{ww}(\theta)w,w \rangle.$$
These functions also depend on $\r\in\D$ and they verify equation \eqref{eqHomEqbis}
for $\r\in\D'$.

If $x=(r,\theta,w)\in \O_{\ga_*}(\s,\mu)$, then 
$$| h_+(x)|\le \ab{f^T}_{\begin{subarray}{c}\s,\mu\ \ \\ \ga, \varkappa,\D  \end{subarray}}
+  \frac 1 2  || Bw ||_{\ga_*}||w ||_{\ga_*}.$$
Since
$$\aa{B}_{\ga,\vark} \ge \aa{B}_{\ga_*,\vark}\ge \aa{B}_{\cB(Y_{\ga_*},Y_{\ga_*})}$$
it follows that
$$| h_+(x)|\le  \Cte \ab{f^T}_{\begin{subarray}{c}\s,\mu\ \ \\ \ga, \varkappa,\D  \end{subarray}}\max(1,\mu^2).$$

We also have for any $x=(r,\theta,w)\in \O_{\ga'}(\s,\mu)$, $\ga_*\le \ga'\le\ga$, 
$$||Jd  h_+(x)||_{\ga'}\le 
 \Cte \ab{f^T}_{\begin{subarray}{c}\s,\mu\ \ \\ \ga, \varkappa,\D  \end{subarray}}+ 
||Bw||_{\ga'}.$$
Since
$$\aa{B}_{\ga,\vark} \ge \aa{B}_{\ga',\vark}\ge \aa{B}_{\cB(Y_{\ga'},Y_{\ga'})}$$
it follows that
$$||Jd  h_+(x)||_{\ga'}\le
\Cte \ab{f^T}_{\begin{subarray}{c}\s,\mu\ \ \\ \ga, \varkappa,\D  \end{subarray}}\max(1,\mu).$$
Finally $Jd^2  h_+(x)$ equals $ JB$ which satisfies the required bound.

The estimates of the derivatives with respect to $\r$  are the same and obtained in the same way.

The functions $S(\theta,r,\zeta)$ and $R(\theta,r,\zeta)$ 
are estimated in the same way.

\end{proof}

\subsection{The non-linear homological equation}

The equation \eqref{eqNlHomEq} can now be solved easily.

\begin{proposition}\label{thm-Eq}
There exists a constant $C$ such that for any
$$h\in\NF_{\varkappa}(\Delta,\de),\quad\delta \le \frac1C c',$$
and for any 
$$N\ge 1,\quad \Delta'\ge \Delta\ge 1,\quad \ka\le\frac1C c'$$ 
there exists a subset $\D'=\D(h, \ka,N)\subset \D$, satisfying 
$$\Leb(\D\setminus {\D'})\le C N^{\exp_1}\big(\frac{\chi+\delta}{\delta_0}\big)
 (\frac{\ka}{\de_0})^{\alpha},$$
and, for any $f\in \Tc_{\ga,\varkappa}(\s,\mu,\D)$, $\mu\le1$,
$$\eps=\ab{f^T}_{\begin{subarray}{c}\s,\mu\ \ \\ \ga, \vark,\D  \end{subarray}}\quad
\textrm{and}\quad 
\xi=\ab{f}_{\begin{subarray}{c}\s,\mu\ \ \\ \ga, \vark,\D  \end{subarray}},$$
there exist real jet-functions 
$S,R\in\Tc_{\ga,\varkappa,\D}(\s,\mu)$ and $h_+$ 
verifying, for $\r\in\D'$,
\be\label{eqNlHomEqbis}
\{ h,S \}+\{ f-f^T,S \}^T+ f^T=  h_++R\ee
and such that
$$h+ h_+\in\NF_{\varkappa}(\Delta',\delta_+)$$
and, for all $ \s'<\s$ and $\mu'<\mu$,

\be\label{estim-B2}
\ab{ h_+}_{\begin{subarray}{c}\s',\mu\ \ \\ \ga, \varkappa,\D  \end{subarray}}\le  CXY\eps\ee

\be\label{estim-S2}
\ab{S}_{\begin{subarray}{c}\s',\mu\ \ \\ \ga, \varkappa,\D  \end{subarray}}\\
\leq C \frac1\ka XY\eps\ee
 
\be\label{estim-R2}
\ab{R}_{\begin{subarray}{c}\s',\mu\ \ \\ \ga', \varkappa,\D  \end{subarray}}\leq 
C\left( e^{-(\s-\s')N}+e^{-(\ga-\ga')\Delta'}\right)XY\eps,\ee
for $\ga_*\le\ga'\le \ga$, 
where
$$X=(\frac{N\Delta' e^{\ga  d_\Delta} }{(\s-\s')(\mu-\mu')})^{\exp_2}$$
and
$$
Y=(\frac{\chi+\delta+\xi}{\ka})^{4{s_*}+3}.$$

The exponent $\exp_1$ only depends  on $\frac{d_*}{\beta_1} $ and $\#\cA$. The exponent $\exp_2$ only depends  
on $d_*,m_*,s_*$  and $\#\cA$. The exponent
$\alpha$ is a positive constant only depending on  $s_*,\frac{d_*}{\vark} ,\frac{d_*}{\beta_3} $.
$C$ is an absolut constant that depends on $c$ and $\sup_\D\ab{\om}$.

\end{proposition}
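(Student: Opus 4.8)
The plan is to solve \eqref{eqNlHomEqbis} by three successive applications of the linear homological Lemma \ref{thm-homo}, the point being that the map $S\mapsto\{f-f^T,S\}^T$ acting on jet-functions is nilpotent in a graded sense, so the natural iteration terminates after three steps and no smallness of $\eps$ or $\xi$ is needed. Write $g=f-f^T$; by Proposition \ref{lemma:jet}, $g\in\Tc_{\ga,\vark,\D}(\s,\mu)$ with $|g|\le C\xi$, and $g^T=0$, i.e. $g$ together with its derivatives $d_rg,\ d_wg,\ d_w^2g$ vanishes at $r=w=0$ (hence so do all their $\theta$-derivatives, being $\theta$-derivatives of functions identically zero in $\theta$). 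Parametrize a jet-function as $S=S_r(r,\theta)+\langle S_w(\theta),w\rangle+\frac12\langle S_{ww}(\theta)w,w\rangle$ with $S_r=S_0(\theta)+\langle S_1(\theta),r\rangle$ affine in $r$. Expanding $\{g,S\}$ in the coordinates $(r,\theta,w)$ as in subsection \ref{ssFourComponents} and extracting the jet, a direct computation using the above vanishings shows that $T:=\{g,\cdot\}^T$ has a \emph{graded triangular structure}: the constant (in $(r,w)$) part of $TS$ is always $0$; its $r$-linear part depends only on $(S_0,S_w)$; its $w$-linear part depends only on $S_0$; its $w$-quadratic part depends only on $(S_0,S_w)$. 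On the other hand, the solution operator of Lemma \ref{thm-homo} is linear in its argument, decouples over the three components $(S_r,S_w,S_{ww})$ (the three equations of subsection \ref{ssFourComponents}), and maps an $r$-linear right-hand side to an $r$-linear solution.

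Now apply Lemma \ref{thm-homo} three times, with the same $h,\ka,N,\Delta'$ but on a chain of shrinking domains $\s=\s_0>\s_1>\s_2>\s_3=\s'$, $\mu=\mu_0>\mu_1>\mu_2=\mu'$ obtained by splitting $\s-\s'$ and $\mu-\mu'$ into fixed fractions. We get real jet-functions $S^{(1)},h_+^{(1)},R^{(1)}$ with $\{h,S^{(1)}\}+f^T=h_+^{(1)}+R^{(1)}$; then $S^{(2)},h_+^{(2)},R^{(2)}$ with $\{h,S^{(2)}\}+\{g,S^{(1)}\}^T=h_+^{(2)}+R^{(2)}$ (legitimate since $\{g,S^{(1)}\}^T\in\Tc_{\ga,\vark,\D}$ by Propositions \ref{lemma:poisson}(i) and \ref{lemma:jet}); then $S^{(3)},h_+^{(3)},R^{(3)}$ with $\{h,S^{(3)}\}+\{g,S^{(2)}\}^T=h_+^{(3)}+R^{(3)}$. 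By the graded structure of $T$, the $r$-part of $\{g,S^{(1)}\}^T$ is $r$-linear, so $S^{(2)}$ has no constant-in-$r$ term ($S^{(2)}_0=0$); then the $w$-part of $\{g,S^{(2)}\}^T$ vanishes, so $S^{(3)}$ has no $w$-linear term ($S^{(3)}_w=0$) and its $r$-part is again $r$-linear ($S^{(3)}_0=0$); hence $\{g,S^{(3)}\}^T=0$. Setting $S=S^{(1)}+S^{(2)}+S^{(3)}$, $h_+=h_+^{(1)}+h_+^{(2)}+h_+^{(3)}$, $R=R^{(1)}+R^{(2)}+R^{(3)}$ and $\D'=\D(h,\ka,N)$ the set of Lemma \ref{thm-homo} (the same in all three steps), linearity of $\{h,\cdot\}$ and of $T$ together with $T(S^{(1)}+S^{(2)}+S^{(3)})=\{g,S^{(1)}\}^T+\{g,S^{(2)}\}^T$ give \eqref{eqNlHomEqbis} on $\D'$. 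Moreover $h+h_+\in\NF_\vark(\Delta',\de_+)$: by its construction in Lemma \ref{thm-homo} each $h_+^{(i)}$ is $r$-affine plus a quadratic form $\frac12\langle w,B_iw\rangle$ with $B_i\in\cM_{\ga,\vark}\cap\NF_{\Delta'}$, $\NF_{\Delta'}$ is a linear space, and the bound on $\de_+-\de$ follows from the estimate on $B=\sum_iB_i$ exactly as in the Remark after Lemma \ref{thm-homo}.

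It remains to collect the estimates. Each application of Lemma \ref{thm-homo} contributes, to the $S$-estimate, a factor $\frac1\ka(N\frac{\chi+\de}{\ka})^{s_*}$ times the $X$ of that lemma (and, to the $h_+$- and $R$-estimates, just that $X$, resp. $X$ times $e^{-(\s-\s')N}+e^{-(\ga-\ga')\Delta'}$); each passage $S^{(n)}\mapsto\{g,S^{(n)}\}^T$ contributes a factor $C\,C_{\s-\s'}^{\mu-\mu'}\xi$ by Proposition \ref{lemma:poisson}(i) (using $|\{g,S^{(n)}\}|\le C_{\s-\s'}^{\mu-\mu'}|g|\,|S^{(n)}|$, optionally in the sharper jet-form of the Remark following that proposition, since $S^{(n)}$ is a jet-function) and Proposition \ref{lemma:jet}; the same applies to the $\r$-derivatives up to order $s_*$, which are already built into the norms. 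Multiplying three $X$'s, two factors $C_{\s-\s'}^{\mu-\mu'}\xi$, and the resulting powers of $N$, $\ka$, $\chi+\de$, and using $\ka\le\frac1Cc'\le\frac1C(\chi+\de+\xi)$ so that $\frac{\chi+\de+\xi}{\ka}\ge1$, all the $\ka$-, $(\chi+\de)$- and $\xi$-powers (apart from one explicit $\frac1\ka$ in \eqref{estim-S2}) are absorbed into $Y=(\frac{\chi+\de+\xi}{\ka})^{4s_*+3}$, while the $N$-, $\Delta$-, $\Delta'$-, $e^{\ga d_\Delta}$- and domain-loss factors are absorbed into $X=(\frac{N\Delta'e^{\ga d_\Delta}}{(\s-\s')(\mu-\mu')})^{\exp_2}$ for a suitably large $\exp_2=\exp_2(d_*,m_*,s_*,\#\cA)$; this yields \eqref{estim-B2}--\eqref{estim-R2}. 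The measure bound for $\D\setminus\D'$ is exactly that of Lemma \ref{thm-homo}, since $\D'$ is one and the same set throughout. The only non-routine point is the graded-triangular structure of $T$ guaranteeing that the iteration stops after three steps; everything else is bookkeeping on top of Lemma \ref{thm-homo} and Propositions \ref{lemma:jet}--\ref{lemma:poisson}.
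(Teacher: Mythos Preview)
Your proposal is correct and takes essentially the same approach as the paper: three successive applications of Lemma~\ref{thm-homo}, terminating because $S\mapsto\{f-f^T,S\}^T$ is nilpotent of order three on jet-functions. The paper organizes this nilpotency by total degree (assigning $w$ degree $1$ and $r$ degree $2$, so that jet-functions have degree $\le 2$ while $f-f^T$ has degree $\ge 3$) rather than component-wise as you do, but the structure of the three equations, the estimate bookkeeping, and the final absorption into $X$ and $Y=(\tfrac{\chi+\delta+\xi}{\kappa})^{4s_*+3}$ are the same.
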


\begin{remark}
Notice that the ``loss'' of $S$ with respect to $\ka$ is of ``order''  $4{{s_*}}+3$.
However, if $\chi$, $\de$  and 
$\xi= \ab{f}_{\begin{subarray}{c}\s,\mu\ \ \\ \ga', \varkappa,\D  \end{subarray}}$ are of size $\lsim \ka$, then the loss is only of ``order'' 1.
\end{remark}

\begin{proof}
Let $S=S_0+S_1+S_2$ be a jet-function such that $S_1$ starts with terms of degree $1$ in $r,w$
and $S_2$ starts with terms of degree $2$ in $r,w$  --  jet functions are polynomials in $r,w$ and we
give (as is usual) $w$ degree $1$ and $r$ degree $2$.

Let now $\s'=\s_5<\s_4<\s_3<\s_2<\s_1<\s_0=\s$ be a (finite) arithmetic progression, i.e.
$\s_j-\s_{j+1}$ do not depend on $j$,  and let
  and $\mu'=\mu_5<\mu_4<\mu_3<\mu_2<\mu_1<\mu_0=\mu$  be another  arithmetic progressions.

Then $\{ h',S\}+\{ f-f^T,S \}^T+ f^T=  h_++R$ decomposes into three homological equations
$$\{ h',S_0 \}+ f^T= ( h_+)_{0}+R_0,$$
$$\{ h',S_1 \}+f_1^T= ( h_+)_{1}+R_1,\quad f_1=\{ f-f^T,S_0 \},$$
$$\{ h',S_2 \}+ f_2^T= ( h_+)_{2}+R_2,\quad f_2=\{ f-f^T,S_1 \}.$$

By Lemma \ref{thm-homo} we have for the first equation
$$\ab{( h_+)_0}_{\begin{subarray}{c}\s_1,\mu\ \ \\ \ga, \varkappa,\D  \end{subarray}}\le  X\eps,
\quad
\ab{R_0}_{\begin{subarray}{c}\s_1,\mu\ \ \\ \ga', \varkappa,\D  \end{subarray}}\leq 
XZ\eps,
$$

$$\ab{S_0}_{\begin{subarray}{c}\s_1,\mu\ \ \\ \ga, \varkappa,\D  \end{subarray}}\\
\leq \frac{1}\ka X Y\eps
$$
where
$$
X=C\Delta'\big(\frac{5\Delta}{\s-\s' }\big)^{\exp} e^{2\ga_1 d_\Delta}.$$
and where $Y,Z$ are defined by the right hand sides in the estimates  \eqref{estim-S} and  \eqref{estim-R}.
 
By Proposition \ref{lemma:poisson} we have
$$
\xi_1=\ab{f_1}_{\begin{subarray}{c}\s_2,\mu_2\ \ \\ \ga, \varkappa,\D  \end{subarray}}\le
\frac{1}\ka X YW\xi \eps$$
where
$$W=C \big(\frac5{(\sigma-\sigma')}  +   \frac5{ (\mu-\mu') }\big).$$
By Proposition \ref{lemma:jet} 
$\eps_1=\ab{f_1^T}_{\begin{subarray}{c}\s_2,\mu_2\ \ \\ \ga, \varkappa,\D  \end{subarray}}$
satisfies the same bound as $\xi_1$

By Lemma \ref{thm-homo} we have for the second equation
$$\ab{( h_+)_1}_{\begin{subarray}{c}\s_3,\mu_2\ \ \\ \ga, \varkappa,\D  \end{subarray}}\le  X\eps_1,\quad
\ab{R_1}_{\begin{subarray}{c}\s_3,\mu_2\ \ \\ \ga', \varkappa,\D  \end{subarray}}\leq 
XZ\eps_1,
$$

$$\ab{S_1}_{\begin{subarray}{c}\s_3,\mu_2\ \ \\ \ga, \varkappa,\D  \end{subarray}}\\
\leq \frac{1}\ka X Y\eps_1.
$$

By Propositions \ref{lemma:jet} and \ref{lemma:poisson} we have
$$
\xi_2=\ab{f_2}_{\begin{subarray}{c}\s_4,\mu_4\ \ \\ \ga, \varkappa,\D  \end{subarray}}\le
\frac1\ka X YW\xi_1 \eps_1,$$
and $\eps_2=\ab{f_2^T}_{\begin{subarray}{c}\s_4,\mu_4\ \ \\ \ga, \varkappa,\D  \end{subarray}}$
satisfies the same bound.

By Lemma \ref{thm-homo} we have for the third equation
$$\ab{( h_+)_2}_{\begin{subarray}{c}\s_5,\mu_4\ \ \\ \ga, \varkappa,\D  \end{subarray}}\le  X\eps_2,\quad
\ab{R_2}_{\begin{subarray}{c}\s_5,\mu_4\ \ \\ \ga', \varkappa,\D  \end{subarray}}\leq 
XZ\eps_2,
$$

$$\ab{S_2}_{\begin{subarray}{c}\s_5,\mu_4\ \ \\ \ga, \varkappa,\D  \end{subarray}}\\
\leq \frac{1}\ka X Y\eps_2.
$$
 
Putting this together we find that
$$\eps+\eps_1+\eps_2 \le (1+\frac1\ka X YW\xi)^3\eps= T\eps$$
and
$$\ab{h_+}_{\begin{subarray}{c}\s',\mu'\ \ \\ \ga, \varkappa,\D  \end{subarray}}\le  
XT\eps,\quad
\ab{R}_{\begin{subarray}{c}\s',\mu'\ \ \\ \ga', \varkappa,\D  \end{subarray}}\leq 
XZT\eps,
$$

$$\ab{S}_{\begin{subarray}{c}\s',\mu'\ \ \\ \ga, \varkappa,\D  \end{subarray}}\\
\leq \frac1\ka X YT\eps.
$$

Renaming $X$ and $Y$ gives now the estimates. 
\end{proof}

\section{Proof of the KAM Theorem}


Theorem \ref{main} is proved by an infinite sequence of change of variables typical for KAM-theory.
The change of variables will be done by the classical Lie transform method
which is based on a well-known relation between composition of a function with a Hamiltonian flow
$\Phi^t_S$ and Poisson brackets:
$$\frac{d}{d t} f\circ \Phi^t_S=\{f,S\}\circ \Phi^t_S
$$
from which we derive
$$f \circ \Phi^1_S=f+\{f,S\}+\int_0^1 (1-t)\{\{f,S\},S\}\circ \Phi^t_S\ \dd t.$$
Given now three functions $ h,k$ and $f$. Then
\begin{multline*}( h+k+f )\circ \Phi^1_S=\\
 h+k+f +\{ h+k+f ,S\}+\int_0^1 (1-t)\{\{ h+k+f ,S\},S\}\circ \Phi^t_S\ \dd t.
\end{multline*}
If now $S$ is a solution of the equation
\be \label{eq-homobis}
\{  h,S \}+\{ f-f^T,S \}^T+ f^T= h_++R,
\ee
then 
$$
( h+k+f )\circ \Phi^1_S= h+k+h_++f_+$$
with
\begin{multline}\label{f+}
f_+= R+(f-f^T)+\{k+f^T ,S\}+\{f-f^T,S\}-\{f-f^T,S\}^T+\\ +\int_0^1 (1-t)\{\{ h+k+f ,S\},S\}\circ \Phi^t_S\ \dd t
\end{multline}
and
\be \label{f+T}
f_+^T= R+ \{k+f^T ,S\}^T+(\int_0^1 (1-t)\{\{h+k+f ,S\},S\}\circ \Phi^t_S\ \dd t)^T.
\ee

If we assume that $S$ is ``small as''  $f^T$, then  $f_+^T$ is is ``small as'' $k f^T$ --  this is the basis
of a linear iteration scheme with (formally) linear convergence.
\footnote{\ it was first used by Poincar\'e, credited by him to the astronomer Delauney, and it has been used many times since then in
different contexts. } 
But if also $k$ is of the size $f^T$, then $f^+$ is is ``small as'' the square of $f^T$  --  this is the basis
of a quadratic iteration scheme with (formally) quadratic convergence. We shall combine both of them.

First we shall give a rigorous version of the change of variables described above.

\subsection{The basic step}
Let $h\in\NF_{\varkappa}(\Delta,\de)$ and  assume  $\vark>0$ and 
 \be\label{ass1}
 \delta \le \frac{1}{C}c' ,\ee
 where $C$ is to be determined. 
 
Let
$$\ga=(\ga,m_*)\ge \ga_*=(0,m_*)$$
and recall Remark \ref{rAbuse}. Let  $N\ge 1$, $\Delta'\ge \Delta\ge 1$ and
$$\ka\le\frac1C c'.$$ 

Proposition \ref{thm-Eq} then gives, for any $f\in \Tc_{\ga,\varkappa,\D}(\s,\mu)$, $\mu\le1$,
$$\eps=\ab{f^T}_{\begin{subarray}{c}\s,\mu\ \ \\ \ga, \vark,\D  \end{subarray}}\quad
\textrm{and}\quad 
\xi=\ab{f}_{\begin{subarray}{c}\s,\mu\ \ \\ \ga, \vark,\D  \end{subarray}},$$
a set
$\D'=\D'(h, \ka,N)\subset \D$
and functions $S,h_+, R$ satisfying \eqref{estim-B2}+\eqref{estim-S2}+\eqref{estim-R2}
and solving  the equation \eqref{eq-homobis},
$$\{h,S \}+\{ f-f^T,S \}^T+ f^T= h_++R,$$
for any $\r\in\D'$. Let now $0<\s'=\s_4<\s_3<\s_2<\s_1<\s_0=\s$  and 
$0<\mu'=\mu_4<\mu_3<\mu_2<\mu_1<\mu_0=\mu$ be  (finite) arithmetic progressions.

\medskip

{\it The flow $\Phi^t_S$.}

We have, by \eqref{estim-S2},
$$\ab{S}_{\begin{subarray}{c}\s_1,\mu_1\ \ \\ \ga, \varkappa,\D  \end{subarray}}\\
\leq \Cte \frac1\ka XY\eps$$
where $X,Y$ and $\Cte$ are given in Proposition \ref{thm-Eq}, i.e. 
$$X=(\frac{\Delta' e^{\ga  d_\Delta}N}{(\s_0-\s_1)(\mu_0-\mu_1)})^{\exp_2}=
(\frac{4^2\Delta' e^{\ga  d_\Delta}N}{(\s-\s')(\mu-\mu')})^{\exp_2},$$
$$Y=(\frac{\chi+\delta+\xi}{\ka})^{4{{s_*}}+3}$$
--  we can assume without restriction that $\exp_2\ge 1$.

If 
\be\label{hyp-f1}
\eps\leq \frac1C \frac{\ka}{X^2Y}, \ee
and $C$ is sufficiently large, then we can apply Proposition \ref{Summarize}(i).
By this proposition  it follows  that for  any $0\le t\le1$ the Hamiltonian
flow map  $\Phi^t_S$ is a $\cC^{{s_*}}$-map 
$$\O_{\ga'}(\s_{i+1},\mu_{i+1})\times \D\to\O_{\ga'}(\s_i,\mu_i),\quad \forall \ga_*\le\ga'\le\ga,\quad i=1,2,3, $$
 real holomorphic and symplectic for any fixed  $\rho\in \D$.
Moreover,
$$|| \p_\r^j (\Phi^t_S(x,\cdot)-x)||_{\ga'}\le \Cte\frac1\ka XY \eps$$
and
$$\aa{ \p_\r^j (d\Phi^t_S(x,\cdot)-I)}_{\ga',\vark}\le \Cte\frac1\ka XY \eps$$
for any $x\in \O_{\ga'}(\s_2,\mu_2)$, $\ga_*\le \ga'\le\ga$,  and $0\le \ab{j}\le {s_*}$.

\medskip

{\it A transformation.}

Let now $k\in \Tc_{\ga,\varkappa,\D}(\s,\mu)$  and set
$$\eta=\ab{k}_{\begin{subarray}{c}\s,\mu\ \ \\ \ga, \vark,\D  \end{subarray}}.$$
 Then we have
$$(h+k+f )\circ \Phi^1_S= h+k+h_++f_+$$
where $f_+$ is defined by \eqref{f+}, i.e.
\begin{multline*}
f_+= R+(f-f^T)+\{k+f^T ,S\}+\{f-f^T,S\}-\{f-f^T,S\}^T+\\ +\int_0^1 (1-t)\{\{ h+k+f ,S\},S\}\circ \Phi^t_S\ \dd t.
\end{multline*}
The integral term is the sum
$$
\int_0^1 (1-t)\{h_++R-f^T,S\}\circ \Phi^t_S\ \dd t
+\int_0^1 (1-t)\{\{k+f ,S\}-\{f-f^T,S\}^T,S\}\circ \Phi^t_S\ \dd t.$$

\medskip

{\it The estimates of $\{k+f^T,S \}$ and $\{f-f^T,S \}$.} 

By Proposition \ref{lemma:poisson}(i)
$$
\ab{\{k+f^T,S\}}_{\begin{subarray}{c} \s_2,\mu_2 \  \\  \ga, \alpha, \D \end{subarray}}
\leq  \Cte X
\ab{S}_{\begin{subarray}{c}\s_1,\mu_1\ \ \\ \ga, \varkappa,\D  \end{subarray}}
 \ab{k+f^T}_{\begin{subarray}{c} \s_1,\mu_1 \  \\  \ga, \alpha, \D \end{subarray}}.$$
 Hence
 \be
\ab{\{k+f^T,S\}}_{\begin{subarray}{c} \s_2,\mu_2 \  \\  \ga, \alpha, \D \end{subarray}}\leq
\Cte  \frac1\ka X^2Y(\eta+\eps) \eps.\ee

Similarly,
\be
 \ab{\{f-f^T,S\}}_{\begin{subarray}{c} \s_2,\mu_2 \  \\  \ga, \alpha, \D \end{subarray}}\leq
\Cte \frac1\ka X^2Y(\xi+\eps)\eps.\ee

\medskip

{\it The estimate of $\{h_+-f^T,S \}\circ\Phi^t_S$.} 

The estimate of $h_+$  is given by \eqref{estim-B2}:
$$\ab{ h_+}_{\begin{subarray}{c}\s_1,\mu_1\ \ \\ \ga, \varkappa,\D  \end{subarray}}\le  \Cte XY\eps.$$
This gives, again by Proposition \ref{lemma:poisson}(ii),
$$
\ab{\{h_+-f^T,S\}}_{\begin{subarray}{c} \s_2,\mu_2 \  \\  \ga, \alpha, \D \end{subarray}}\leq 
\Cte \frac1\ka X^3Y^2\eps^2.$$

Let now $F=\{h_+-f^T,S \}$.  If $\eps$ verifies \eqref{hyp-f1} for
a  sufficiently large constant $C$, then we can apply Proposition \ref{Summarize}(ii). By this
proposition, for $\ab{t}\le1$, the function
$F\circ \Phi_S^t\in \Tc_{\ga,\vark,\D}(\s_3,\mu_3)$ and
\be
\ab{\{h_+-f^T,S \}\circ \Phi_S^t}_{\begin{subarray}{c} \s_3,\mu_3 \  \\  \ga, \vark, \D \end{subarray}}\leq 
\Cte \frac1\ka X^3Y^2\eps^2.\ee

\medskip

{\it The estimate of $\{R,S \}\circ\Phi^t_S$.} 

The estimate of  $R$ is given by \eqref{estim-R2}:

$$\ab{R}_{\begin{subarray}{c}\s_2,\mu_1\ \ \\ \ga', \varkappa,\D  \end{subarray}}\leq 
\Cte XYZ_{\ga'}\eps,$$
where
$$Z_{\ga'}=\left( e^{-(\s-\s_2)N}+e^{-(\ga-\ga')\Delta'}\right).$$
Then, as in the previous case,
\be
\ab{\{R,S \}\circ \Phi_S^t}_{\begin{subarray}{c} \s_3,\mu_3 \  \\  \ga', \vark, \D \end{subarray}}\leq 
\Cte \frac1\ka X^3Y^2Z_{\ga'}\eps^2.\ee

\medskip

{\it The estimate of $\{\{k+f,S \}-\{f-f^T,S\}^T,S\}\circ\Phi^t_S$.} 

This function is estimated as above. If $F=\{\{k+f,S \}-\{f-f^T,S\}^T,S\}$, then, by 
Proposition \ref{lemma:jet} and Proposition \ref{lemma:poisson}(i),
$$
\ab{F}_{\begin{subarray}{c} \s_3,\mu_3 \  \\  \ga, \alpha, \D \end{subarray}}\leq 
\Cte (\frac1\ka X^2Y)^2(\eta+\xi)\eps^2$$
and by Proposition \ref{Summarize}(ii)
\be
\ab{\{\{k+f,S \}-\{f-f^T\}^T,S\}\circ \Phi_S^t}_{\begin{subarray}{c} \s_4,\mu_4 \  \\  \ga, \vark, \D \end{subarray}}\leq
\Cte (\frac1\ka X^2Y)^2(\eta+\xi)\eps^2 .\ee

Renaming now $X$ and $Y$ and replacing $N$ by $2N$ now gives the following lemma.

\begin{lemma}\label{basic}
There exists an absolute constant $C$ such that, for any
$$h\in\NF_{\varkappa}(\Delta,\de),\quad\vark>0,\quad \delta \le \frac1C c',$$
and for any 
$$N\ge 1,\quad \Delta'\ge \Delta\ge 1,\quad \ka\le\frac1C c',$$ 
there exists a subset $\D'=\D( h, \ka,N)\subset \D$, satisfying 
$$\Leb(\D\setminus {\D'})\le C N^{\exp_1}
\big(\frac{\chi+\delta}{\delta_0}\big) (\frac{\ka}{\de_0})^{\alpha},$$
and, for any $f\in \Tc_{\ga,\varkappa,\D}(\s,\mu)$,  $\mu\le1$,
$$\eps=\ab{f^T}_{\begin{subarray}{c}\s,\mu\ \ \\ \ga, \vark,\D  \end{subarray}}\quad \textrm{and}\quad
\xi=[f]_{\s,\mu,\D}^{\ga,\varkappa},$$
satisfying
$$\eps \leq \frac1C \frac{\ka}{XY}, \qquad 
\left\{\begin{array}{ll}
X=(\frac{N\Delta' e^{\ga  d_\Delta}}{(\s-\s')(\mu-\mu')})^{\exp_2},& \s'<\s,\ \mu'<\mu\\
Y= (\frac{\chi+\delta+\xi}\ka)^{\exp_3},&\ \end{array}\right. 
$$
and for any  $k\in \Tc_{\ga,\varkappa,\D}(\s,\mu)$,
there exists a  $\cC^{{s_*}}$ mapping
$$\Phi:\O_{\ga'}(\s',\mu')\times \D\to\O_{\ga'}(\s-\frac{\s-\s'}2,\mu-\frac{\mu-\mu'}2),\quad \forall \ga_*\le\ga'\le\ga,$$
real holomorphic and symplectic  for each fixed parameter $\r\in\D$, and functions
$f_+,R_+\in \Tc_{\ga,\varkappa,\D}(\s',\mu')$ and
$$h+h_+\in\NF_{\varkappa}(\Delta',\delta_+),$$
such that
$$(h+k+f )\circ \Phi= h+k+ h_++f_++R_+,\quad \forall \r\in\D',$$
and
$$\ab{ h_+}_{\begin{subarray}{c}\s',\mu'\ \ \\ \ga, \varkappa,\D  \end{subarray}}
+\ab{ f_+-f}_{\begin{subarray}{c}\s',\mu'\ \ \\ \ga, \varkappa,\D  \end{subarray}}\le 
 CXY\eps,$$
$$\ab{ f_+^T}_{\begin{subarray}{c}\s',\mu'\ \ \\ \ga, \varkappa,\D  \end{subarray}}\le 
C\frac1\ka XY
(\ab{k}_{\begin{subarray}{c}\s,\mu\ \ \\ \ga, \varkappa,\D  \end{subarray}}+\ka e^{-(\s-\s')N}+\eps )\eps
$$
and
$$\ab{ R_+}_{\begin{subarray}{c}\s',\mu'\ \ \\ \ga', \varkappa,\D  \end{subarray}}\le 
C XY e^{-(\ga-\ga')\Delta'}\eps$$
for any $\ga_*\le\ga'\le\ga$.

Moreover,
$$|| \p_\r^j (\Phi(x,\r)-x)||_{\ga'}+ \aa{ \p_\r^j (d\Phi(x,\r)-I)}_{\ga',\vark} \le C\frac1\ka XY \eps$$
for any $x\in \O_{\ga'}(\s',\mu')$, $\ga_*\le\ga'\le\ga$ and $\ab{j}\le{s_*}$, and for any $\r\in\D$.

Finally, if  $\tilde \r=(0,\r_2,\dots,\r_p)$ and $f^T(\cdot,\tilde \r)=0$ for all $\tilde \r$,  then $f_+-f=R_+=h_+=0$ and $\Phi(x,\cdot)=x$
 for all $\tilde \r$.
 
The exponent $\exp_1$ only depends  on $\frac{d_*}{\beta_1} $ and $\#\cA$. The exponent $\exp_2$ only depends  
on $d_*,m_*,s_*$  and $\#\cA$. The exponent $\exp_3$ only depends on $s_*$. The exponent
$\alpha$ is a positive constant only depending on  $s_*,\frac{d_*}{\vark} ,\frac{d_*}{\beta_3} $.
$C$ is an absolut constant that depends on $c$ and $\sup_\D\ab{\om}$.

\end{lemma}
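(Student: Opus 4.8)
The plan is to assemble Lemma \ref{basic} by combining the solution of the non-linear homological equation (Proposition \ref{thm-Eq}) with the Lie-transform identity and the flow estimates (Proposition \ref{Summarize}), exactly following the computation that precedes the statement. First I would fix the parameters: apply Proposition \ref{thm-Eq} with the given $h$, $N$, $\Delta'$, $\ka$ to obtain $\D'$, the jet-functions $S$, $R$, and $h_+$ solving $\{h,S\}+\{f-f^T,S\}^T+f^T=h_++R$ on $\D'$, together with the bounds \eqref{estim-B2}, \eqref{estim-S2}, \eqref{estim-R2}. I would then introduce the arithmetic progressions $\s'=\s_4<\cdots<\s_0=\s$ and $\mu'=\mu_4<\cdots<\mu_0=\mu$ so that the successive Poisson-bracket and flow estimates each lose a fixed fraction of the width. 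The smallness hypothesis $\eps\le\frac1C\frac\ka{XY}$ is then arranged so that $\ab{S}_{\s_1,\mu_1;\ga,\vark,\D}\le\frac1C\min(\s-\s',\mu-\mu')$, which is the condition needed to invoke Proposition \ref{Summarize}(i)--(ii).

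Next I would carry out the Lie-transform computation as displayed in the text: writing $f_+$ via formula \eqref{f+} and $f_+^T$ via \eqref{f+T}, and checking the identity $(h+k+f)\circ\Phi^1_S=h+k+h_++f_+$. The estimates then proceed term by term. The terms $\{k+f^T,S\}$ and $\{f-f^T,S\}$ are handled by Proposition \ref{lemma:poisson}(i), yielding bounds of order $\frac1\ka X^2Y(\eta+\eps)\eps$ and $\frac1\ka X^2Y(\xi+\eps)\eps$. The integral terms $\{h_+-f^T,S\}\circ\Phi^t_S$, $\{R,S\}\circ\Phi^t_S$, and $\{\{k+f,S\}-\{f-f^T,S\}^T,S\}\circ\Phi^t_S$ are estimated by first applying Proposition \ref{lemma:poisson} to the inner brackets and then Proposition \ref{Summarize}(ii) to control the composition with the flow; each produces a factor $\eps^2$ (up to powers of $X$, $Y$, and for the $R$-term the decay factor $Z_{\ga'}=e^{-(\s-\s_2)N}+e^{-(\ga-\ga')\Delta'}$). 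Collecting these gives the stated bounds on $\ab{h_+}+\ab{f_+-f}$, on $\ab{f_+^T}$ (where the $\eta=\ab k$ contribution comes from the $\{k+f^T,S\}^T$ piece of \eqref{f+T}, the $\ka e^{-(\s-\s')N}$ from $R$, and the $\eps$ from the quadratic terms), and on $\ab{R_+}$. Renaming $X$ and $Y$ (absorbing all the extra powers of $\exp_2$, $4s_*+3$, etc.) and replacing $N$ by $2N$ to kill the fractional constants $\frac1\Cte$ in the exponents then puts everything in the form claimed.

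The properties of $\Phi$ itself — that it is a $\cC^{s_*}$ map $\O_{\ga'}(\s',\mu')\times\D\to\O_{\ga'}(\s-\frac{\s-\s'}2,\mu-\frac{\mu-\mu'}2)$, real holomorphic and symplectic for each $\r$, with $\|\p_\r^j(\Phi(x,\cdot)-x)\|_{\ga'}+\|\p_\r^j(d\Phi(x,\cdot)-I)\|_{\ga',\vark}\le C\frac1\ka XY\eps$ — are read off directly from Proposition \ref{Summarize}(i) applied to the vector field of $S$, using $\ab S_{\s_1,\mu_1;\ga,\vark,\D}\le\Cte\frac1\ka XY\eps$. The fact that $h+h_+\in\NF_\vark(\Delta',\de_+)$ is part of the conclusion of Proposition \ref{thm-Eq}. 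The final assertion — that if $\tilde\r=(0,\r_2,\dots,\r_p)$ and $f^T(\cdot,\tilde\r)\equiv0$ then $f_+-f=R_+=h_+=0$ and $\Phi(x,\cdot)=x$ — follows because $S$, $R$, $h_+$ all vanish when $f^T$ does (each is built linearly from $f^T$ in Proposition \ref{thm-Eq}), hence the flow is the identity and formula \eqref{f+} gives $f_+=f$; this is exactly the last clause of Proposition \ref{thm-Eq} propagated through the Lie transform.

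The main obstacle is purely bookkeeping: tracking how the various auxiliary factors ($\Delta'$, $e^{2\ga d_\Delta}$, the powers $\exp_2$ of $N/((\s-\s')(\mu-\mu'))$, the power $4s_*+3$ in $Y$, the exponential $\max(1,\mu^2)$) accumulate through roughly five nested applications of Propositions \ref{lemma:poisson}, \ref{lemma:jet}, and \ref{Summarize}, and verifying that they can all be absorbed into a single renamed $X$ and $Y$ with $\exp_2$ and $\exp_3$ depending only on the declared quantities. One must also be slightly careful that the smallness threshold $\frac1C\frac\ka{XY}$ is uniform enough to license every invocation of Proposition \ref{Summarize} on the chain of shrinking domains $\O_{\ga'}(\s_i,\mu_i)$; since there are only finitely many steps and the progressions are arithmetic, a single constant $C$ (absolute, depending on $c$ and $\sup_\D\ab\om$) suffices. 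No genuinely new idea is needed beyond what has already appeared in Sections 5 and 6.
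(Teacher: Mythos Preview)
Your proposal is correct and follows essentially the same approach as the paper: the proof is precisely the computation in Section~6.1 preceding the lemma, assembling Proposition~\ref{thm-Eq}, the Lie-transform identity \eqref{f+}--\eqref{f+T}, and Propositions~\ref{lemma:poisson}, \ref{lemma:jet}, \ref{Summarize} over the arithmetic progressions $\s_0>\cdots>\s_4$, $\mu_0>\cdots>\mu_4$, then renaming $X$, $Y$ and replacing $N$ by $2N$ at the end. The only point worth noting is that in the paper the intermediate smallness condition reads $\eps\le\frac1C\frac{\ka}{X^2Y}$ (see \eqref{hyp-f1}), and it is the final renaming of $X$ (increasing $\exp_2$) that puts it in the form $\eps\le\frac1C\frac{\ka}{XY}$ stated in the lemma.
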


\subsection{A finite induction}
We shall first make a finite iteration without changing the normal form in order to decrease
strongly the size of the perturbation. We shall restrict ourselves to the case when $N=\Delta'$.

\begin{lemma}\label{Birkhoff}
There exists a constant $C$ such that, for any
$$h\in\NF_{\varkappa}(\Delta,\de),\quad\vark>0,\quad\delta \le \frac1C c',$$
and for any 
$$\Delta'\ge \Delta\ge 1,\quad \ka\le\frac1C c',$$ 
there exists a subset $\D'=\D( h, \ka,\Delta')\subset \D$, satisfying 
$$\Leb(\D\setminus {\D'})\le C (\Delta')^{\exp_1}
\big(\frac{\chi+\delta}{\delta_0}\big) (\frac{\ka }{\de_0})^{\alpha},$$
and,  for any $f\in \Tc_{\ga,\varkappa,\D}(\s,\mu)$,  $\mu\le1$,
$$\eps=\ab{f^T}_{\begin{subarray}{c}\s,\mu\ \ \\ \ga, \vark,\D  \end{subarray}}\quad \textrm{and}\quad
\xi=[f]_{\s,\mu,\D}^{\ga,\varkappa},$$
satisfying
$$\eps \leq \frac1C \frac{\ka}{XY},\quad \left\{\begin{array}{ll}
X=(\frac{\Delta' e^{\ga  d_\Delta}}{(\s-\s')(\mu-\mu')}\log\frac1{\eps})^{\exp_2},& \s'<\s,\ \mu'<\mu\\
Y= (\frac{\chi+\delta+\xi}\ka)^{\exp_3},&\ \end{array}\right. $$
there exists a  $\cC^{{s_*}}$ mapping
$$\Phi:\O_{\ga'}(\s',\mu')\times \D\to\O_{'\ga}(\s-\frac{\s-\s'}{2},\mu-\frac{\mu-\mu'}{2}),
\quad \forall \ga_*\le\ga'\le\ga,$$
real holomorphic and symplectic  for each fixed parameter $\r\in\D$, and  functions
$f'\in \Tc_{\ga,\varkappa,\D}(\s',\mu')$ and
$$h'\in\NF_{\varkappa}(\Delta',\de'),$$
such that  
$$(h+f )\circ \Phi= h'+f',\quad \forall \r\in\D',$$
and
$$\ab{ h'- h}_{\begin{subarray}{c}\s',\mu'\ \ \\ \ga, \varkappa,\D  \end{subarray}}\le C XY \eps,$$
$$\xi'=\ab{ f'}_{\begin{subarray}{c}\s',\mu'\ \ \\ \ga', \varkappa,\D  \end{subarray}} \le  \xi+ CXY \eps$$
and 
$$ \eps'=\ab{ (f')^T}_{\begin{subarray}{c}\s',\mu'\ \ \\ \ga', \varkappa,\D  \end{subarray}} \le 
C XY(e^{-\frac12(\s-\s')\Delta'}+ e^{-\frac12(\ga-\ga')\Delta'})\eps,$$
for any $\ga_*\le\ga'\le \ga$.

Moreover,
$$|| \p_\r^j (\Phi(x,\r)-x)||_{\ga'}+ \aa{ \p_\r^j (d\Phi(x,\r)-I)}_{\ga',\vark} \le C\frac1\ka XY \eps$$
for any $x\in \O_{\ga'}(\s',\mu')$, $\ga_*\le\ga'\le\ga$ and $\ab{j}\le{s_*}$, and for any $\r\in\D$.

Finally, if  $\tilde \r=(0,\r_2,\dots,\r_p)$ and $f^T(\cdot,\tilde \r)=0$ for all $\tilde \r$,  then $f'-f=h'=0$ and $\Phi(x,\cdot)=x$
 for all $\tilde \r$.
 
The exponent $\exp_1$ only depends  on $\frac{d_*}{\beta_1} $ and $\#\cA$. The exponent $\exp_2$ only depends  
on $d_*,m_*,s_*$  and $\#\cA$. The exponent $\exp_3$ only depends on $s_*$. The exponent
$\alpha$ is a positive constant only depending on  $s_*,\frac{d_*}{\vark} ,\frac{d_*}{\beta_3} $.
$C$ is an absolut constant that depends on $c$ and $\sup_\D\ab{\om}$.

\end{lemma}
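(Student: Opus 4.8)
The plan is to iterate the basic step, Lemma \ref{basic}, finitely many times, always taking the slot $k$ to be the perturbation accumulated so far outside the ``jet $+$ normal form'' part, and never enlarging the block structure beyond $\Delta'$. Fix $\ka$ and the target $\Delta'$ once and for all and set $N=\Delta'$ at every micro-step, as announced; after the first application the normal form already lies in $\NF_\vark(\Delta',\cdot)$, so from the second step on one may take $\Delta=\Delta'$ and only the defect $\de$ changes. Choose an integer $M$ of order $\log\frac1\eps$ and split the total domain losses into $M$ pieces, $\s=\s^{(0)}>\s^{(1)}>\dots>\s^{(M)}=\s'$ and $\mu=\mu^{(0)}>\dots>\mu^{(M)}=\mu'$ (a final bite of size $\approx\frac12(\s-\s')$, resp.\ $\approx\frac12(\mu-\mu')$, the rest split evenly, so $\s^{(j)}-\s^{(j+1)}\gtrsim\frac{\s-\s'}{M}$ throughout). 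Put $h_0=h$, $f_0=f$, $\mathcal R_0=0$; given $h_j\in\NF_\vark(\Delta',\de_j)$, a jet-controlled $f_j\in\Tc_{\ga,\vark,\D}(\s^{(j)},\mu^{(j)})$ and an accumulated remainder $\mathcal R_j\in\Tc_{\ga,\vark,\D}(\s^{(j)},\mu^{(j)})$, apply Lemma \ref{basic} on $\O_{\ga}(\s^{(j)},\mu^{(j)})\to\O_{\ga}(\s^{(j+1)},\mu^{(j+1)})$ with $k=\mathcal R_j$, $N=\Delta'$, producing $\Phi_{j+1}$, $(h_+)_j$, $(f_+)_j$, $(R_+)_j$, and set $h_{j+1}=h_j+(h_+)_j$, $f_{j+1}=(f_+)_j$, $\mathcal R_{j+1}=\mathcal R_j+(R_+)_j$, so that $(h+f)\circ(\Phi_1\circ\dots\circ\Phi_{j+1})=h_{j+1}+\mathcal R_{j+1}+f_{j+1}$.

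The induction runs on the three quantities $\eps_j=\ab{f_j^T}$, $\xi_j=\ab{f_j}+\ab{\mathcal R_j}$ and $\de_j$, all norms in the $\ga,\vark,\D$ scale on the current domain. Writing $X_j$ for the geometric factor of Lemma \ref{basic} with $N=\Delta'$ and decrement $\s^{(j)}-\s^{(j+1)}$, and $Y_j=(\frac{\chi+\de_j+\xi_j}{\ka})^{\exp_3}$, that lemma gives $\ab{(h_+)_j}+\ab{(f_+)_j-f_j}\le CX_jY_j\eps_j$, $\ab{(R_+)_j}_{\ga'}\le CX_jY_je^{-(\ga-\ga')\Delta'}\eps_j$ for $\ga_*\le\ga'\le\ga$, and the crucial super-linear bound $\eps_{j+1}\le C\frac1\ka X_jY_j\big(\ab{\mathcal R_j}+\ka e^{-(\s^{(j)}-\s^{(j+1)})\Delta'}+\eps_j\big)\eps_j$. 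Since $\eps_0=\eps$ satisfies the smallness hypothesis of the present lemma, one checks inductively that the increments $X_jY_j\eps_j$ are dominated by a geometric sequence of ratio $\ll1$, so $\sum_jX_jY_j\eps_j\lesssim XY\eps$; this keeps $\de_j\le\de_0+C\sum_{i<j}X_iY_i\eps_i\le\frac1Cc'$, keeps $\xi_j\le\xi+CXY\eps$ (hence $Y_j\le2Y_0$ and $X_j=:X$ essentially constant), and permits the re-verification of $\eps_j\le\frac1C\ka/(X_jY_j)$ at each step. In the final step the $\theta$-truncation contributes the factor $e^{-\frac12(\s-\s')\Delta'}$, and together with $\ab{\mathcal R_j}\lesssim XYe^{-(\ga-\ga')\Delta'}\eps$ (dominated by the first remainder) one reaches $\ab{f_M^T}_{\ga'}\lesssim XY\,(e^{-\frac12(\s-\s')\Delta'}+e^{-\frac12(\ga-\ga')\Delta'})\eps$ for every $\ga_*\le\ga'\le\ga$.

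To assemble the conclusion set $\Phi=\Phi_1\circ\dots\circ\Phi_M$; it is real holomorphic and symplectic for each $\r$, and invoking Proposition \ref{Summarize}(i) at each micro-step (the smallness hypothesis being available) it maps $\O_{\ga'}(\s',\mu')$ into $\O_{\ga'}(\s-\frac{\s-\s'}2,\mu-\frac{\mu-\mu'}2)$ for all $\ga_*\le\ga'\le\ga$, with $\aa{\p_\r^j(\Phi(x,\r)-x)}_{\ga'}+\aa{\p_\r^j(d\Phi(x,\r)-I)}_{\ga',\vark}\le\sum_jC\frac1\ka X_jY_j\eps_j\lesssim C\frac1\ka XY\eps$ for $\ab j\le s_*$, the composition being controlled by the Banach-algebra structure of $\cM_{\ga',\vark}^b$. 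Then $h'=h_M=h+\sum_j(h_+)_j\in\NF_\vark(\Delta',\de')$ with $\de'-\de\le\sum_j\aa{(h_+)_j}\lesssim XY\eps$ and $\ab{h'-h}\le CXY\eps$, while $f'=f_M+\mathcal R_M$ (the $(R_+)_j$ are already part of the transformed Hamiltonian and are not re-transformed) satisfies $\xi'=\ab{f'}\le\ab{f_M}+\ab{\mathcal R_M}\le\xi+CXY\eps$ and $\ab{(f')^T}_{\ga'}\le\ab{f_M^T}_{\ga'}+\ab{\mathcal R_M^T}_{\ga'}\le CXY(e^{-\frac12(\s-\s')\Delta'}+e^{-\frac12(\ga-\ga')\Delta'})\eps$ for $\ga_*\le\ga'\le\ga$, the first tail from the $\theta$-truncation of the last step, the second from the accumulated block-truncation remainders (dominated by $(R_+)_0$, after relabelling $\Delta'$). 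The set is $\D'=\bigcap_{j=1}^M\D^{(j)}$ with $\Leb(\D\setminus\D^{(j)})\le C(\Delta')^{\exp_1}(\frac{\chi+\de_j}{\de_0})(\frac\ka{\de_0})^\alpha$ and $\de_j\le\chi$, so summing over the $M\approx\log\frac1\eps$ steps (the factor $M$ absorbed into $(\Delta')^{\exp_1}$ or the constant) yields the stated measure bound. Finally, if $f^T(\cdot,\tilde\r)=0$ for all $\tilde\r=(0,\r_2,\dots,\r_p)$, the corresponding clause of Lemma \ref{basic} gives inductively $\Phi_j(x,\tilde\r)=x$, $(h_+)_j=0$, $(f_+)_j-f_j=0$, $(R_+)_j=0$ for all $\tilde\r$, whence $h'=h$, $f'=f$ and $\Phi(x,\tilde\r)=x$.

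The main obstacle is the bookkeeping of the accumulated errors: one must verify that the increments $X_jY_j\eps_j$ to the normal-form defect and to the non-jet norm form a summable sequence dominated by its first term, which is precisely what makes the finite induction close — it keeps $\de_j$ below $\frac1Cc'$ and $X_j,Y_j$ bounded, so that Lemma \ref{basic} stays applicable and the super-linear decay of $\eps_j$ is self-sustaining — and, correlated with this, the right calibration of $M$, of the domain decrements and of the Fourier cut-off $N=\Delta'$ so that the two exponentially small tails $e^{-\frac12(\s-\s')\Delta'}$ and $e^{-\frac12(\ga-\ga')\Delta'}$ emerge with the claimed constants. The remaining ingredients — the composition estimates for the parameter-dependent symplectic maps via Proposition \ref{Summarize}, and the $\cM_{\ga',\vark}^b$-algebra bounds — are routine.
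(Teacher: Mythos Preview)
Your overall architecture — a finite iteration of Lemma~\ref{basic} with $N=\Delta'$ — is correct, but there is a genuine gap in how you assign the roles at each micro-step. You absorb the normal-form increment into $h_{j+1}=h_j+(h_+)_j$ and place the accumulated remainder $\mathcal R_j$ in the $k$-slot. This forces you, from the second step on, to invoke Lemma~\ref{basic} with a normal form $h_j\in\NF_\vark(\Delta',\de_j)$ rather than $\NF_\vark(\Delta,\de)$. But the geometric factor $X$ in Lemma~\ref{basic} carries $e^{\ga d_\Delta}$, where $\Delta$ is the block scale of the normal form with respect to which the homological equation is being solved; once that scale jumps to $\Delta'$, your $X_j$ acquires $e^{\ga d_{\Delta'}}$ in place of $e^{\ga d_\Delta}$. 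Since $d_{\Delta'}$ may be arbitrarily larger than $d_\Delta$ (Proposition~\ref{blocks} gives $d_\Delta\lesssim\Delta^{(d_*+1)!/2}$), this destroys both the smallness hypothesis you need to keep iterating and every output bound as stated — your claim ``$X_j=:X$ essentially constant'' fails already at $j=1$.

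The paper avoids this precisely by reversing the roles: the normal form handed to Lemma~\ref{basic} stays equal to the \emph{original} $h\in\NF_\vark(\Delta,\de)$ at every step, and it is the accumulated normal-form corrections $k_j=\sum_{i\le j}(h_+)_i$ that occupy the $k$-slot, while the block-truncation remainders $S_j$ are carried separately and composed with each $\Phi_j$. Because $h$, $\ka$ and $N=\Delta'$ are then fixed throughout, the bad set $\D'=\D'(h,\ka,\Delta')$ is the \emph{same} at every micro-step (so no summation over $M$ is needed in the measure estimate), and every $X_j$ retains $e^{\ga d_\Delta}$. The number $K$ of micro-steps is also calibrated differently from your $M\approx\log\tfrac1\eps$: one takes $K\approx(\s-\s')\Delta'/\log\tfrac\ka\eps$ so that the Fourier tail satisfies $\ka e^{-(\s_j-\s_{j+1})\Delta'}\le\eps$ at \emph{every} step, and the factor $e^{-\frac12(\s-\s')\Delta'}$ in the final bound for $(f')^T$ arises from the geometric decay $\eps_j\lesssim\big(\Cte\tfrac{X_1^2Y_1^2}\ka\eps\big)^{j-1}\eps$ after $K$ iterations, not from the $\theta$-truncation of a single last step.
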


\begin{proof}
Let $N=\Delta'$. Let $\s_1=\s-\frac{\s-\s'}2$, $\mu_1=\mu-\frac{\mu-\mu'}2$ and  $\s_{K+1}=\s'$, $\mu_{K+1}=\mu'$, and let 
$\{\s_j\}_1^{K+1}$ and $\{\mu_j\}_1^{K+1}$ be arithmetical progressions. We take $K$ such that 
$$\ka e^{-(\s_{j}-\s_{j+1})N}\le \eps,$$
i.e. $K\le (\s-\s')\Delta'(\log\frac\ka{\eps})^{-1}$.

We let $f_1=f$ and $k_1=0$, and we let
$\eps_1= [f_1^T]_{\begin{subarray}{c}\s,\mu\ \ \\ \ga, \vark,\D  \end{subarray}}=\eps$, 
$\xi_1=  [f_1]_{\begin{subarray}{c}\s,\mu\ \ \\ \ga, \vark,\D  \end{subarray}}=\xi$, 
$\delta_1=\delta$ and
$\eta_1=[k_1]_{\begin{subarray}{c}\s,\mu\ \ \\ \ga, \vark,\D  \end{subarray}}=0$. 

Define now
$$\eps_{j+1}=C\frac1\ka X_jY_j(\eta_j+\eps_1+\eps_j)\eps_j,$$
$$\xi_{j+1}=\xi_j+ C X_jY_j \eps_j,\quad \eta_{j+1}=\eta_j+CX_jY_j\eps_j,$$
with
$$X_j=(\frac{N\Delta' e^{\ga d_\Delta}}  {(\s_j-\s_{j+1})(\mu_j-\mu_{j+1})})^{\exp_2},\quad Y_j=(\frac{\chi+\delta+\xi_j}{\ka})^{\exp_3},$$
where $C, \exp_2, \exp_3$  are given in Lemma \ref{basic}. One verifies by an immediate induction that 

\begin{sublem*}
There exists an absolute constant $C'$ such that if
$$\eps_1\le\frac 1{C'}
 \frac\ka{ X_1^2Y_1^2}$$
 then, for all $j\ge1$, 
 $$\eps_j\le \frac1C \frac\ka{ X_j^2Y_j^2}\quad\textrm{and}\quad\eps_{j}\le (\Cte \frac{ X^2_1Y^2_1}\ka  \eps_1)^{j-1}\eps_1 ,$$
$$(\xi_{j} - \xi_1)+(\eta_{j}-\eta_1)  \le C'  X_1 Y_1\eps_1.$$
The constant $\Cte$ only depends on $C$ and $\exp_3$.
\end{sublem*}

We can then  apply Lemma \ref{basic} $K$ times to get
$$\Phi_j:\O_{\ga'}(\s_{j+1},\mu_{j+1})\times \D'\to
\O_{\ga'}(\s_j-\frac{\s_j-\s_{j+1}}2,\mu_j-\frac{\mu_j-\mu_{j+1}}2),\quad \ga_*\le\ga'\le\ga_{j}$$
and $f_{j+1}$ and $R_{j+1}$ such that, for $\r\in\D'$,
$$(h+k_j+f_j +S_j)\circ \Phi_j= h+k_j+ h_{j+1}+f_{j+1}+R_{j+1}+S_j\circ \Phi_j$$
with $k_{j+1}=k_j+ h_{j+1}$, $k_1=0$, $S_{j+1}=R_{j+1}+S_j\circ \Phi_j$, $S_1=0$.

We then take $\Phi=\Phi_1\circ\dots\circ \Phi_K$, $h'= h+k_{K+1}$ and
$f'=f_{K+1}+S_{K+1}$.

Then
$$\ab{ h'-h}_{\begin{subarray}{c}\s',\mu'\ \ \\ \ga, \varkappa,\D  \end{subarray}}\le C' X_1Y_1 \eps,
\qquad  \delta'\le C(\Delta')^{\exp_2}X_1Y_1\eps$$
and
$$\xi'=\ab{ f'}_{\begin{subarray}{c}\s',\mu'\ \ \\ \ga, \varkappa,\D  \end{subarray}} \le  \xi+ C'X_1Y_1 \eps$$
and, for $\r\in\D'$,
$$ \eps'=\ab{ (f')^T}_{\begin{subarray}{c}\s',\mu'\ \ \\ \ga', \varkappa,\D  \end{subarray}} \le 
 (\Cte \frac{ X^2_1Y^2_1}\ka  \eps_1)^K\eps +C X_1Y_1 e^{-(\ga-\ga')\Delta'}\eps.$$

For the estimates of $\Phi$, write $\Psi_j=\Phi_j\circ\dots\circ \Phi_K$ and  $\Psi_{K+1}=id$.
For $(x,\r)\in \O_{\ga'}(\s',\mu')\times \D$ we then have 
$$||\Phi(x,\r)-x||_{\ga'}\le 
\sum_{j=1}^K  ||\Psi_j(x,\r)-\Psi_{j+1}(x,\r)||_{\ga'}.$$
Then
$$
||\Psi_j(x,\r)-\Psi_{j+1}(x,\r)||_{\ga'}=||\Phi_j(\Psi_{j+1}(x,\r),\r)-\Psi_{j+1}(x,\r)||_{\ga'}$$
is
$$
\le \Cte \frac1\ka X_1Y_1 \eps_j\max(\frac1{|\s-\s'  |}, \frac1{|\mu-\mu'  |}),$$
by a Cauchy estimate. Hence
$$||\Phi(x,\cdot)-x||_{\ga'}\le
\Cte \frac1\ka X_1Y_1 \max(\frac1{|\s-\s'  |}, \frac1{|\mu-\mu'  |})\eps.$$

The derivatives with respect to $\r$ are obtained in the same way, as is also the estimates of $d\Phi$.

The result now follows if we take $C'$ sufficiently large and increases the exponent $\exp_2$.

\end{proof}

{\it  Proof of sublemma.}
The estimates are true for $j=1$ so we proceed by induction on $j$. Let us assume the estimates 
hold up to $j$. Then, for $k\le j$,
$$Y_{k}\le (\frac{\chi+\delta+\xi_1+2C' X_1Y_1\eps_1}{\ka})^{\exp_3}= 2^{\exp_3} Y_1$$
and 
$$\eps_{j+1}\le   2^{\exp_3} \frac{ X_1Y_1}\ka  
[2C X_1 Y_1\eps_1+\eps_1+\eps_1]\eps_j\le   \Cte \frac{ X^2_1Y^2_1}\ka  \eps_1\eps_j.$$
Then
$$\xi_{j+1}\le \xi_1+\Cte X_1Y_1(\eps_1+\dots+\eps_{j+1})\le \xi_1+2\Cte  X_1Y_1\eps_1$$
and similarly for $\eta_{j+1}$.

\subsection{The infinite induction}

We are now in position to prove our main result, Theorem \ref{main}. 

Let $h$ be a normal form Hamiltonian in $\NF_{\varkappa}(\Delta,\delta)$ and let 
$f\in \Tc_{\ga,\varkappa,\D}(\s,\mu)$ be a perturbation such that
$$ 0<\eps= \ab{f^T}_{\begin{subarray}{c}\s,\mu\ \ \\ \ga, \vark,\D  \end{subarray}},\quad   
\xi=\ab{f}_{\begin{subarray}{c}\s,\mu\ \ \\ \ga, \vark,\D  \end{subarray}}. $$
We  construct the transformation $\Phi$ as the composition of infinitely many transformations $\Phi$ as in Lemma \ref{Birkhoff}. We first specify the choice of all the parameters for $j\geq 1$. 

Let $C,\exp_1,\exp_2,\exp_3$ and $\alpha$ be the constants given in Lemma \ref{Birkhoff}.

\subsubsection{Choice of parameters}
We assume (to simplify) $\ga,\s,\mu\le1$ and  $\Delta\ge1$.
By decreasing $\ga$ or increasing $\Delta$ we can also assume $\ga=(d_{\Delta})^{-1}$.

We  choose for $j\ge1$
$$ \mu_j=\big(\frac 12 +\frac 1 {2^j}\big)\mu\quad\textrm{and}\quad\s_{j}=\big(\frac 1 2 +\frac 1 {2^j}\big)\s.$$
We define inductively the sequences $\eps_j$, $\Delta_j$, $\de_j$ and $\xi_j$ by
\be\left\{\begin{array}{ll}
\eps_{j+1}= \eps^{K_j}CX_1Y_1\eps & \eps_1=\eps\\
\Delta_{j+1} =4K_j
\max(\frac {1} {\s_j-\s_{j+1}},d_{\Delta_j})\log \frac1{\eps}&\Delta_1=\Delta\\
\ga_{j+1}=(d_{\Delta_{j+1}})^{-1}& \ga_1=\ga\\
\de_{j+1}=\de_j+ C   X_jY_j\eps_j& \de_1=\de\ge0 \\
\xi_{j+1}= \xi_j+CX_jY_j\eps_j&\xi_1=\xi\ge\eps,
\end{array}\right.\ee
where
$$\left\{\begin{array}{ll}
X_j=(\frac{\Delta_{j+1} e^{\ga_j  d_{\Delta_j}}}{(\s_j-\s_{j+1})(\mu_j-\mu_{j+1})}\log \frac1{\eps_j})^{\exp_2}
&=(\frac{K_j\Delta_{j+1} e4^{j+1}}{\s\mu}\log \frac1{\eps})^{\exp_2}\\
Y_j=( \frac{\chi+\delta_j+\xi_j}{\ka_j})^{\exp_3}.&
\end{array}\right.$$
The $\ka_j$ is defined implicitly by
$$\eps_j=\frac 1{C}
 \frac{\ka_j}{ X_jY_j},$$
 
 These sequences depend on the choice of $K_j$. We shall let $K_j$ increase like 
$$K_{j}=K^{j}$$
for some $K$ sufficiently large.

\begin{lemma}\label{numerical2} 
There exist  constants $C'$ and $\exp'$  such that, if
$$
K\ge C' $$
and
$$
\eps\le\frac1{C'}\big( \frac{\s\mu}{K\Delta\log\frac1\eps}\big)^{\exp'}
\big(\frac{c' }{\chi+\delta+\xi}\big)^{\exp_3} c',$$

then 
\begin{itemize} 
\item[(i)] 
$$\sum_{k=1}^\infty CX_kY_k\eps_k  \le 2C X_1Y_1 \eps\le \frac1{2C} c'.$$

\item[(ii)] 
$$\sum_{k=1}^\infty C \max(\frac1{\s_k-\s_{k+1}}, \frac1{\mu_k-\mu_{k+1}})X_kY_k\eps_k 
\le 5C \max(\frac1{\s}, \frac1{\mu}) X_1Y_1   \eps\le \frac1{2C} c'.$$

\item[(iii)]
 $$
  \sum_{j\ge1}C \Delta_{j+1}^{\exp_1}   \big(\frac{\chi+\delta_j}{\delta_0}\big) 
     (\frac{\ka_j}{\de_0})^{\alpha}\le
 C'(\frac{K\Delta\log\frac1\eps}{\s\mu})^{\exp'} ( \frac{\chi+\delta+\xi}{\delta_0})^{\beta}
 (\frac{\eps}{\de_0})^{\alpha'}.
 $$
    
\end{itemize}

 $C'$ is an absolut constant that only depends on  $\beta, \vark,c$ and $\sup_\D\ab{\om}$.
The exponent $\exp'$ is an absolute constant that only depends  on $\beta$ and $\vark$. 
The exponents $\alpha'$ and $\beta$ are positive constants only depending on  $s_*,\frac{d_*}{\vark} ,\frac{d_*}{\beta_3} $.

\end{lemma}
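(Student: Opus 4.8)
The plan is to treat the three sums as essentially geometric series whose ratios are driven by the recursion $\eps_{j+1}=\eps^{K_j}CX_1Y_1\eps$, and to verify by induction that under the smallness hypothesis on $\eps$ the "coefficient" sequences $X_j,Y_j,\delta_j,\xi_j$ stay within a bounded factor of their $j=1$ values, so that the super-exponential decay $\eps^{K_j}$ with $K_j=K^j$ swamps everything else. First I would set up the bookkeeping: since $\mu_j-\mu_{j+1}=2^{-(j+1)}\mu$ and $\s_j-\s_{j+1}=2^{-(j+1)}\s$, the factor $\big(\frac{K_j\Delta_{j+1}e4^{j+1}}{\s\mu}\log\frac1\eps\big)^{\exp_2}$ defining $X_j$ grows only like $\exp(\Cte j)$ times a power of $\Delta_{j+1}$, while $\Delta_{j+1}$ itself satisfies $\Delta_{j+1}=4K_j\max\big(2^{j+1}\s^{-1},d_{\Delta_j}\big)\log\frac1\eps$; using Proposition \ref{blocks}, $d_{\Delta_j}\le C\Delta_j^{(d_*+1)!/2}$, so $\Delta_{j+1}$ grows at most like a tower of fixed height in $j$, i.e. still only like $\exp(\exp(\Cte j))$ — crucially sub-double-exponential, hence negligible against $\eps^{-K_j}=\eps^{-K^j}$ once $\eps$ is small. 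The same bound controls the exponential factor $e^{\ga_j d_{\Delta_j}}=e$ (by the choice $\ga_j=(d_{\Delta_j})^{-1}$), which is why that normalization was made.

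Next I would run the induction that keeps $\xi_j,\delta_j$ bounded. Assume inductively $\xi_j\le 2\xi$, $\delta_j\le 2\delta$ (say), hence $Y_j\le 2^{\exp_3}Y_1\cdot(\text{bounded})$ and $X_j\le (\text{power of }j\text{ and }\Delta_{j+1})\cdot X_1$. Then $\ka_j=CX_jY_j\eps_j$, and plugging the bound $\eps_{j}\le(\text{huge negative power of }\eps)$ into $\eps_{j+1}=\eps^{K_j}CX_1Y_1\eps$ shows $CX_jY_j\eps_j\le \eps^{K^{j-1}/2}\cdot CX_1Y_1\eps$ for $\eps$ small, which is summable with sum $\le 2CX_1Y_1\eps$; this closes the induction for $\xi_{j+1}=\xi_j+CX_jY_j\eps_j$ and $\delta_{j+1}$, and simultaneously proves (i). For (ii) the only change is the extra factor $\max((\s_k-\s_{k+1})^{-1},(\mu_k-\mu_{k+1})^{-1})\le 2^{k+1}\max(\s^{-1},\mu^{-1})$, which is absorbed into the geometric decay, yielding the constant $5$ as stated (the first three terms dominate). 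For (iii), each summand is $C\Delta_{j+1}^{\exp_1}\big(\frac{\chi+\delta_j}{\de_0}\big)(\frac{\ka_j}{\de_0})^\alpha$; using $\ka_j=CX_jY_j\eps_j$ and the decay of $\eps_j$, the $j\ge2$ terms are dwarfed by the $j=1$ term, which is $C\Delta^{\exp_1}\big(\frac{\chi+\delta}{\de_0}\big)(\frac{\ka_1}{\de_0})^\alpha$ with $\ka_1=CX_1Y_1\eps$; unpacking $X_1=\big(\frac{K\Delta e\,4^2}{\s\mu}\log\frac1\eps\big)^{\exp_2}$ and $Y_1=\big(\frac{\chi+\delta+\xi}{\ka_1}\big)^{\exp_3}$ and solving for $\ka_1$ gives $\ka_1\lsim \big(\frac{K\Delta\log\frac1\eps}{\s\mu}\big)^{\Cte}(\chi+\delta+\xi)^{\Cte}\eps^{\Cte}$, from which the claimed form with exponents $\alpha',\beta,\exp'$ follows after renaming constants.

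The main obstacle I expect is not the geometric-series convergence itself but the careful tracking of how the exponents $\exp_1,\exp_2,\exp_3$ combine when one solves the implicit relation $\eps_j=\frac1C\frac{\ka_j}{X_jY_j}$ for $\ka_j$: because $Y_j$ depends on $\ka_j$ through $(\frac{\chi+\delta_j+\xi_j}{\ka_j})^{\exp_3}$, the relation is $\eps_j=\frac1C\frac{\ka_j^{1+\exp_3}}{X_j(\chi+\delta_j+\xi_j)^{\exp_3}}$, so $\ka_j=\big(CX_j(\chi+\delta_j+\xi_j)^{\exp_3}\eps_j\big)^{1/(1+\exp_3)}$, and one must verify that the resulting power of $\eps$ (namely $\alpha'=\alpha/(1+\exp_3)$, roughly) is still positive and that the power of $(\chi+\delta+\xi)/\de_0$ that emerges, together with the power gathered from $\frac{\chi+\delta_j}{\de_0}$, is the advertised $\beta$. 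A secondary technical point is checking that the hypothesis $K\ge C'$ is exactly what is needed to make $\eps^{K_j}\le\eps^{K^j}$ beat the tower-type growth of $\Delta_{j+1}$ and $X_j$ at every step uniformly in $j$ — this is where one needs $K$ large rather than merely $\eps$ small, since $\Delta_{j+1}$ carries a factor $K_j=K^j$ and one wants $\eps^{K^j}K^{j\exp_2}\Delta_j^{\Cte}\to0$ fast; choosing $K$ large and then $\eps$ small (depending on $K$, $\Delta$, $\s$, $\mu$, $\chi+\delta+\xi$, $c'$) makes all three estimates go through with room to spare.
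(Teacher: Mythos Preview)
Your plan is essentially the paper's proof: bound the growth of $\Delta_j$ and $X_j$, solve the implicit relation $\ka_j^{1+\exp_3}=CX_j(\chi+\delta_j+\xi_j)^{\exp_3}\eps_j$ to identify $\ka_j=CX_jY_j\eps_j$, run an induction keeping $\delta_j,\xi_j$ close to their initial values, and conclude that for $j\ge2$ one has $\ka_j\le\eps^{bK^j}$ with $b=(1+\exp_3)^{-1}$ up to a factor, so every sum is dominated by its $j=1$ term.

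One point deserves tightening. The growth of $\Delta_j$ is \emph{not} sub-double-exponential: from $d_{\Delta_j}\le C\Delta_j^{(d_*+1)!/2}$ one gets $\Delta_{j+1}\lesssim A(2K)^{j}\Delta_j^{a}$ with a fixed $a\ge 6$, hence $\Delta_j\lesssim(\tfrac1\eps)^{a^j}$ and $X_j\lesssim(\tfrac1\eps)^{2\exp_2 a^j}$ --- genuinely double-exponential in $j$, just like $\eps^{-K^j}$. The competition is therefore between the bases $a$ and $K$ in the inner exponential, and is won precisely by choosing $K\ge C'$ large (not merely $\eps$ small), which is exactly the first hypothesis of the lemma. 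You do say this correctly at the end, but the earlier phrase ``sub-double-exponential, hence negligible once $\eps$ is small'' should be replaced by this comparison. Also, the line ``$Y_j\le 2^{\exp_3}Y_1\cdot(\text{bounded})$'' is not literally true since $Y_j$ carries $\ka_j^{-\exp_3}$; what holds (and what the paper uses) is $Y_j\ka_j^{\exp_3}\le\text{Ct.}\,Y_1\ka_1^{\exp_3}=\text{Ct.}(\chi+\delta+\xi)^{\exp_3}$, which is exactly the input you need when you later solve for $\ka_j$.
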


 Notice that (i) implies that
$$ C X_jY_j(e^{-\frac12(\s_j-\s_{j+1})\Delta_{j+1}}+ e^{-\frac12(\ga_j-\ga_{j+1})\Delta_{j+1}}  )\eps_j\le \eps_{j+1},$$

\begin{proof}
$\Delta_{j+1} $ is equal to
$$4K_j\max(\frac {1} {\s_j-\s_{j+1}},d_{\Delta_j})\log \frac1{\eps}
\le
(\Cte \frac {1} {\s} \log\frac1{\eps})(2K)^{j^2}\Delta_j^{a}=A(2K)^{j^2}\Delta_j^{a},$$
which, by an induction, is seen to be, by assumption on $\eps$,
$$\le(A(2K)^a\Delta)^{a^j}\le (\frac1\eps)^{a^j}$$
if $a$ is, say,  at least $6$. In the same way one sees that
$$
X_j\le  (\frac1\eps)^{2\exp_2 a^j}.$$

\medskip

(i). For $j=1$, (i) holds by assumption.  Indeed, by definition
$$(CX_1Y_1\eps_1)^{1+\exp_3}=
\ka_1^{1+\exp_3}=CX_1Y_1\ka_1^{\exp_3}\eps_1 \le \Cte
 \big(\frac{K\Delta\log\frac1\eps}{\s\mu}\big)^{\exp_2'}
(\chi+\delta+\xi)^{\exp_3}\eps,$$
which is 
$$\le \big(\frac1{4C} c'\big)^{\exp_3+1}$$
by assumption on $\eps$. 

Assume now (i) holds up to $j-1\ge1$. Then $\de_j\le\de+2CX_1Y_1\eps$ and $\xi_j\le\xi+2CX_1Y_1\eps$, and hence
$$
Y_j\le ( \frac{\chi+\delta+\xi+ 4CX_1Y_1 \eps}{\ka_j})^{\exp_3}\le
\Cte Y_1(\frac{\ka_1}{\ka_j})^{\exp_3},$$
and, by the definition of $\ka_j$,
$$\ka_j^{1+\exp_3}=CX_jY_j\eps_j \ka_j^{\exp_3} \le\Cte Y_1 \ka_1^{\exp_3}X_j\eps_j \le X_j\eps^{K_{j}}$$
by assumption on $\eps$. Hence
$$CX_jY_j\eps_j=\ka_j\le X_j\eps^{2b K_{j}}\le \eps^{2b K_{j}-2\exp_2 a^j}\le  \eps^{b K_{j}}   ,\quad b=\frac1{2(\exp_3+1)},$$
if $K$ is large enough   --  notice that $j\ge2$. This implies that
$$\sum_{k=2}^j C X_kY_k\eps_k  \le  2\eps^{b K_{2}}\le \eps\le CX_1Y_1\eps_1$$
if $K$ is large enough.

The proof of (ii) is similar. To see (iii)  we have for $j\ge 2$
   $$ \Delta_{j+1}^{\exp_1}\ka_j^{\alpha}= 
  \big(\Delta_{j+1}^{\exp'} \ka_j\big)^{\alpha}\le  \big(X_j^{\exp'} \ka_j\big)^{\alpha}
  \le \big(\eps^{2b K_{j}-2(\exp'+1)\exp_2 a^j}\big)^{\alpha}
  $$
  which is
  $$\le \eps^{b K_{j}\alpha } $$
  if $K$ is large enough.

 Therefore
 $$
 \sum_{j\ge1}\Delta_{j+1}^{\exp_1}\ka_j^{\alpha}=\Delta_{2}^{\exp_1}\ka_1^{\alpha}+  2\eps^{b K_{2}\alpha }\le  2\Delta_{2}^{\exp_1}\ka_1^{\alpha}$$
  if $K$ is large enough.
 
\end{proof}

\subsubsection{The iteration}

\begin{proposition}
There exist positive constants $C'$, $\alpha'$ and  $\exp'$ such that, for any
$h\in\NF_{\varkappa}(\Delta,\de)$ and for any 
$f\in \Tc_{\ga,\varkappa,\D}(\s,\mu)$, $0<\ga,\s,\mu\le 1$,
$$ \eps=\ab{f^T}_{\begin{subarray}{c}\s,\mu\ \ \\ \ga, \varkappa,\D  \end{subarray}},\quad 
 \xi=\ab{f}_{\begin{subarray}{c}\s,\mu\ \ \\ \ga, \varkappa,\D  \end{subarray}},$$
if
$$\delta \le \frac1{C'} c'$$
and
$$
\eps(\log \frac1\eps)^{\exp'}\le\frac1{C}\big( \frac{ \max(\ga^{-1} ,d_{\Delta})}{\s\mu}\big)^{-\exp'}
\big(\frac{c'}{\chi+\delta+\xi}\big)^{\exp_3} c' $$

then there exist a set $\D'=\D'(h, f)\subset \D$,
$$\Leb (\D\setminus \D')\leq 
C\big(\log\frac1{\eps} \frac{ \max(\ga^{-1} ,d_{\Delta})}{\s\mu} \big)^{\exp'}
( \frac{\chi+\delta+\xi}{\delta_0})^{\beta}
(\frac{\eps}{\de_0})^{\alpha'},$$
and a $\cC^{{s_*}}$ mapping 
$$\Phi :\O_{\ga_*}(\s/2,\mu/2)\times \D \to\O_{\ga_*}(\s,\mu),$$
real holomorphic and symplectic for given parameter $\rho\in\D$,
and
$$h'\in \NF_{\varkappa}(\infty,\de'),\quad \de'\le \frac{c'}2,$$
such that
$$(h+f)\circ \Phi=h'+f'$$
verifies
$$\ab{ f'-f}_{\begin{subarray}{c}\s/2,\mu/2\ \ \\ \ga_*, \varkappa,\D  \end{subarray}} \le  C'$$
and, for $\r\in\D '$, $(f')^T=0$.

Moreover,
$$\ab{ h'- h}_{\begin{subarray}{c}\s/2,\mu/2\ \ \\ \ga_*, \varkappa,\D  \end{subarray}}\le C'$$
and
$$|| \p_\r^j (\Phi(x,\cdot)-x)||_{\ga_*}+ \aa{ \p_\r^j (d\Phi(x,\cdot)-I)}_{\ga_*,\vark} \le C'$$
for any $x\in \O_{(0,m_*)}(\s',\mu')$ and $\ab{j}\le{s_*}$, and for any $\r\in\D$.

Finally, if  $\tilde \r=(0,\r_2,\dots,\r_p)$ and $f^T(\cdot,\tilde \r)=0$ for all $\tilde \r$,  then $h'=h$ and $\Phi(x,\cdot)=x$
for all $\tilde \r$.

 $C'$ is an absolut constant that only depends on  $\beta, \vark,c$ and $\sup_\D\ab{\om}$.
The exponent $\exp'$ is an absolute constant that only depends  on $\beta$ and $\vark$. 
The exponent $\exp_3$ only depends on $s_*$.
The exponent $\alpha'$ is a positive constant only depending on  $s_*,\frac{d_*}{\vark} ,\frac{d_*}{\beta_3} $.

\end{proposition}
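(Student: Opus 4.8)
The plan is to run the standard KAM iteration, applying the finite reduction of Lemma~\ref{Birkhoff} infinitely often with the sequences $\s_j,\mu_j,\Delta_j,\ga_j,\de_j,\xi_j,\eps_j,\ka_j$ and $K_j=K^j$ fixed in the ``Choice of parameters'' paragraph, and then passing to the limit. I set $h_1=h$, $f_1=f$, $\D_1=\D$, and argue by induction: having produced $h_j\in\NF_\vark(\Delta_j,\de_j)$, a closed set $\D_j\sbs\D$ and $f_j\in\Tc_{\ga_j,\vark,\D}(\s_j,\mu_j)$ with $\ab{f_j^T}_{\begin{subarray}{c}\s_j,\mu_j\ \ \\ \ga_j,\vark,\D\end{subarray}}\le\eps_j$, $\ab{f_j}_{\begin{subarray}{c}\s_j,\mu_j\ \ \\ \ga_j,\vark,\D\end{subarray}}\le\xi_j$, and $(h+f)\circ(\Phi_1\circ\dots\circ\Phi_{j-1})=h_j+f_j$ on $\D_j$, I apply Lemma~\ref{Birkhoff} with $\Delta'=\Delta_{j+1}$ (so $N=\Delta_{j+1}$), $\ka=\ka_j$, $\s'=\s_{j+1}$, $\mu'=\mu_{j+1}$, $\ga=\ga_j$, $\ga'=\ga_{j+1}$. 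Its three smallness requirements hold: $\eps_j\le\frac1C\frac{\ka_j}{X_jY_j}$ is the very definition of $\ka_j$, while $\de_j\le\frac1Cc'$ and $\ka_j\le\frac1Cc'$ follow from Lemma~\ref{numerical2}(i), which bounds $\sum_kCX_kY_k\eps_k$ — hence each of $\de_j-\de$, $\xi_j-\xi$ and $\ka_j=CX_jY_j\eps_j$ — by $\frac1{2C}c'$, using the standing hypotheses $\de\le\frac1{C'}c'$ and the smallness of $\eps$ assumed in the Proposition.

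Lemma~\ref{Birkhoff} then furnishes, for every $\ga_*\le\ga'\le\ga_j$, a $\Ca^{{s_*}}$ map $\Phi_j:\O_{\ga'}(\s_{j+1},\mu_{j+1})\times\D\to\O_{\ga'}(\s_j,\mu_j)$, real holomorphic and symplectic in each $\r$; a closed set $\D_{j+1}\sbs\D_j$ with $\Leb(\D_j\setminus\D_{j+1})\le C\Delta_{j+1}^{\exp_1}\big(\tfrac{\chi+\de_j}{\de_0}\big)\big(\tfrac{\ka_j}{\de_0}\big)^{\alpha}$; a normal form $h_{j+1}\in\NF_\vark(\Delta_{j+1},\de_{j+1})$; and $f_{j+1}$ with $(h_j+f_j)\circ\Phi_j=h_{j+1}+f_{j+1}$ on $\D_{j+1}$, together with $\ab{h_{j+1}-h_j}$, $\ab{f_{j+1}-f_j}\le CX_jY_j\eps_j$ and $\ab{f_{j+1}^T}\le\eps_{j+1}$ (all norms at the current parameters $\s_{j+1},\mu_{j+1},\ga_j,\vark$). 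The last bound is exactly where the parameters are tuned: Lemma~\ref{Birkhoff} gives $\ab{f_{j+1}^T}\le CX_jY_j\big(e^{-\frac12(\s_j-\s_{j+1})\Delta_{j+1}}+e^{-\frac12(\ga_j-\ga_{j+1})\Delta_{j+1}}\big)\eps_j$, and the choices $\Delta_{j+1}=4K_j\max(\tfrac1{\s_j-\s_{j+1}},d_{\Delta_j})\log\tfrac1\eps$, $\ga_{j+1}=(d_{\Delta_{j+1}})^{-1}$ make the bracket $\lsim\eps^{2K_j}$ — the second exponent because $(\ga_j-\ga_{j+1})\Delta_{j+1}\ge\Delta_{j+1}/d_{\Delta_j}-1\ge 4K_j\log\tfrac1\eps-1$, using $d_{\Delta_{j+1}}\ge\Delta_{j+1}$ — so that $CX_jY_j(\cdots)\eps_j\le\eps^{K_j}CX_1Y_1\eps=\eps_{j+1}$ by Lemma~\ref{numerical2}(i). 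I also record $\|\p_\r^\ell(\Phi_j(x,\cdot)-x)\|_{\ga'}+\|\p_\r^\ell(d\Phi_j(x,\cdot)-I)\|_{\ga',\vark}\le C\tfrac1{\ka_j}X_jY_j\eps_j$ for $|\ell|\le{s_*}$, and that $\Phi_j=\mathrm{id}$, $h_{j+1}=h_j$ on any slice $\tilde\r=(0,\r_2,\dots,\r_p)$ on which $f_j^T$ vanishes.

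Passing to the limit: put $\D'=\bigcap_{j\ge1}\D_j$, so $\Leb(\D\setminus\D')\le\sum_j\Leb(\D_j\setminus\D_{j+1})$, which is precisely the series summed in Lemma~\ref{numerical2}(iii) and yields the stated measure bound once one notes $\de_j\le\de+\frac1{2C}c'$ and that the normalization $\ga=(d_\Delta)^{-1}$ replaces $\tfrac{K\Delta}{\s\mu}$ by $\tfrac{\max(\ga^{-1},d_\Delta)}{\s\mu}$ up to an absolute factor. Each $\Phi^{(j)}:=\Phi_1\circ\dots\circ\Phi_j$ is defined on $\O_{\ga_*}(\s/2,\mu/2)$ (all $\O_{\ga_j}(\s_j,\mu_j)$ contain it, since $\ga_j\ge\ga_*$ and $\s_j,\mu_j\downarrow\s/2,\mu/2$); the telescoping estimate $\|\Phi^{(j)}-\Phi^{(j-1)}\|_{\ga_*}\le\big(\prod_{k<j}\|d\Phi_k\|\big)\|\Phi_j-\mathrm{id}\|_{\ga_*}$, together with the convergence of $\sum_k\|d\Phi_k-I\|$ and $\sum_k\|\Phi_k-\mathrm{id}\|$ from Lemma~\ref{numerical2}(i)--(ii), shows that $\Phi=\lim_j\Phi^{(j)}$ exists, is real holomorphic and symplectic in each $\r$, maps $\O_{\ga_*}(\s/2,\mu/2)\times\D\to\O_{\ga_*}(\s,\mu)$, and satisfies $\|\p_\r^\ell(\Phi(x,\cdot)-x)\|_{\ga_*}+\|\p_\r^\ell(d\Phi(x,\cdot)-I)\|_{\ga_*,\vark}\le C'$. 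Likewise $h_j\to h'$ with $\ab{h'-h}_{\begin{subarray}{c}\s/2,\mu/2\ \ \\ \ga_*,\vark,\D\end{subarray}}\le\sum_jCX_jY_j\eps_j\le C'$; since $\Delta_j\to\infty$ the limiting quadratic part is block diagonal over the spheres $\E_\infty$, so $h'\in\NF_\vark(\infty,\de')$ with $\de'=\lim\de_j\le\de+\frac1{2C}c'\le\frac{c'}2$. Finally $f_j\to f'$ with $\ab{f'-f}\le C'$, and on $\D'$ one has $\ab{(f_j)^T}\le\eps_j\to0$, whence $(h+f)\circ\Phi=h'+f'$ with $(f')^T\equiv0$ there, which is (i); the slice assertion is inherited term by term from the step version.

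This reduces the Proposition to pure bookkeeping: all the analytic substance — the solution of the nonlinear homological equation with estimates (Proposition~\ref{thm-Eq}), the small-divisor excisions (Lemmas~\ref{lSmallDiv1}--\ref{lSmallDiv3} and \ref{thm-homo}), and the hybrid of a finite linear inner loop with a super-geometric outer loop (Lemmas~\ref{basic}, \ref{Birkhoff}) — is already in place, so the proof itself presents no obstacle. The one step that genuinely needs care is Lemma~\ref{numerical2}, whose proof is the delicate part: to keep simultaneously the super-geometric decay $\eps_{j+1}\approx\eps^{K_j}\eps$, the growth of the block cut-offs $\Delta_j$ (which enters $X_j$ through $e^{\ga_j d_{\Delta_j}}$ and $\log\tfrac1{\eps_j}$), and the geometric shrinking $\s_j-\s_{j+1}\approx 2^{-j}\s$, one must take $K_j=K^j$ with $K$ a sufficiently large absolute constant, so that inductively $\Delta_{j+1}\le\eps^{-a^j}$ and $X_j\le\eps^{-2\exp_2 a^j}$ while $\eps_j\le\eps^{\,bK_j}$ with $bK_j\gg\exp_2 a^j$; then every series occurring above is dominated by a convergent geometric-type series and every smallness threshold propagates along the induction. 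Granting Lemma~\ref{numerical2}, the Proposition follows.
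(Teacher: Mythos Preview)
Your proof is correct and follows essentially the same approach as the paper: iterate Lemma~\ref{Birkhoff} with the parameter sequences from the ``Choice of parameters'' paragraph, invoke Lemma~\ref{numerical2} to verify the smallness hypotheses and sum all the relevant series, then pass to the limit. The paper's write-up is terser (it refers back to the Cauchy-estimate argument of Lemma~\ref{Birkhoff} for the convergence of $\Phi$ rather than spelling out the telescoping, and handles the reduction to $\ga=(d_\Delta)^{-1}$ by one sentence at the end), but the structure and substance are the same.
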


\begin{proof} Assume first that $\ga=d_\Delta^{-1}$.


Choose the number $\mu_j,\s_j,\eps_j,\Delta_j,\ga_j,\de_j,\xi_j,X_j,Y_j,\ka_j$ as 
above in Lemma \ref{numerical2} with $K= C' $.
By the assumption on $\eps$ we can apply Lemma \ref{numerical2}. 

Let $h_1=h$, $f_1=f$ and $\D_1=\D$. Lemma \ref{numerical2} now implies that
we can apply Lemma \ref{Birkhoff} iteratively to get for all $j\ge1$
a set $\D_{j+1}\subset\D_j$ such that
$$\Leb (\D_j\setminus \D_{j+1})\leq   C \Delta_{j+1}^{\exp_2}
\big(\frac{\chi+\delta_j}{\delta_0}\big) (\frac{\ka_j}{\de_0})^{\alpha},$$
a $\cC^{{s_*}}$ mapping
$$\Phi_{j+1} :\O^{\ga'}(\s_{j+1},\mu_{j+1})\times \D_{j+1}\to
\O^{\ga'}(\s_j-\frac{\s_j-\s_{j+1}}{2},\mu_j-\frac{\mu_j-\mu_{j+1}}{2}),\quad \forall \ga_*\le\ga'\le\ga_{j+1},$$
real holomorphic and symplectic  for each fixed parameter $\r$, 
and  functions
$f_{j+1}\in \Tc_{\ga,\varkappa,\D}(\s_{j+1},\mu_{j+1})$ and
$$h_{j+1}\in \NF_{\varkappa}(\Delta_{j+1},\de_{j+1})$$
such that
$$(h_j+f_j)\circ \Phi_{j+1}=h_{j+1}+f_{j+1},\quad\forall \r\in \D_{j+1},$$
with

$$\ab{ f_{j+1}^T}_{\begin{subarray}{c}\s_{j+1},\mu_{j+1}\ \ \\ \ga_{j+1}, \varkappa,\D  \end{subarray}} 
\le  \eps_{j+1}$$
and
$$\ab{ f_{j+1}}_{\begin{subarray}{c}\s_{j+1},\mu_{j+1}\ \ \\ \ga_{j+1}, \varkappa,\D  \end{subarray}} 
\le  \xi_{j+1}. $$

Moreover,
$$\ab{ h_{j+1}- h_j}_{\begin{subarray}{c}\s_{j+1},\mu_{j+1}\ \ \\ \ga_{j+1}, \varkappa,\D  \end{subarray}}
\le C X_jY_{j} \eps_j$$
and
$$|| \p_\r^l (\Phi_{j+1}(x,\cdot)-x)||_{\ga'}+ \aa{ \p_\r^l (d\Phi_{j+1}(x,\cdot)-I)}_{\ga',\vark} \le C\frac1{\ka_j} X_jY_j \eps_j$$
for any $x\in \O_{\ga'}(\s_{j+1},\mu_{j+1})$, $\ga_*\le \ga'\le\ga_{j+1}$ and $\ab{l}\le{s_*}$.

We let $h'=\lim h_j$, $f'=\lim f_j$ and  $\Phi=\Phi_2\circ\dots\circ \Phi_3\circ\dots $.
Then $(h+f)\circ \Phi=h'+f'$ and $h'$ and $f'$ verify the statement. The convergence of $\Phi$
and its estimates follows by Cauchy estimates as in the proof of Lemma \ref{Birkhoff}.

The last statement is obvious.

If  $\ga>(d_{\Delta})^{-1}$, then we can just decrease $\ga$ and we obtain the same result. If 
$\ga<(d_{\Delta})^{-1}$, then we increase $\Delta$ and we obtain the same result.
\end{proof}

Theorem \ref{main} now follows from this proposition.

\section{Examples}

\subsection{Beam equation with a convolutive potential
}\label{s4.1}
Consider the $d_*$ dimensional beam equation on the torus
\be \label{beamm}u_{tt}+\Delta^2 u+V\star u + \eps g(x,u)=0 ,\quad   x\in \T^{d_*}.
\ee
 Here  $g$ is a real analytic function on $\T^{d_*}\times I$, where  $I$ is a 
  neighborhood of the origin in $\R$, and the 
   convolution potential $V:\ \T^{d_*}\to \R$ is supposed to be analytic with
   real  Fourier coefficients $\hat V(a)$, $a\in\Z^{d_*}$. 
   
     Let $\cA$ be any subset of cardinality $n$ in $\Z^{d_*}$. We set  $\L=\Z^{d_*}\setminus \cA$, 
$
\rho=(\hat V_a)_{a\in\cA},
$
and treat $\rho$ as a parameter of the equation,
$$
\rho=(\rho_{a_1},\dots,\rho_{a_n})
\in\D=[\rho_{a_1'},\rho_{a_1^{''}}]\times\dots \times[\rho_{a_n'},\rho_{a_n^{''}}]
$$
 (all other Fourier coefficients are fixed). We denote $\mu_a=|a|^4+ \hat V(a)$, $a\in\Z^{d_*}$, 
 and assume 
 that $\mu_a >0$ for all $a\in \cA$, i.e. $|a|^4+\rho_a'>0$ if $a\in\cA$. We also suppose that 
 $$
 \mu_l\ne0,\quad \mu_{l_1}\ne\mu_{l_2} \qquad \forall \, l, l_1, l_2 \in\L,\ l_1\ne l_2. 
 $$
 
 Denote 
$$
\F=\{a\in\L:  \mu_a<0\}, \;\; |\F|=: {{N}},\quad \L_\infty = \L\setminus \F\,,
$$
consider the operator
$$
\Lambda=|\Delta^2 +V\star\ |^{1/2}=\diag \{\lambda_a, a\in \Z^{d_*}\}\,, \quad \lambda_a= \sqrt{|\mu_a|}\,,
$$
and
the following operator $\Lambda^{\#}$, 
linear over real numbers:
$$
\LL(ze^{i \langle   a, x\rangle })
= \left\{\begin{array}{ll}\
\  z\la e^{i  \langle  a, x\rangle }, \;\; a\in\L_{\infty}\,,\\
 -\bar z \la e^{i \langle a,x\rangle  }, \;\; a\in\F,
\end{array}\right.
$$
Introducing the complex variable 
 $$
 \psi= \frac 1{\sqrt 2}(\Lambda^{1/2}u- i\Lambda^{-1/2}\dot u)=    (2\pi)^{-d/2}   \sum_{a\in\Z^{d_*}}\psi_a e^{i \langle   a, x\rangle }\,,
 $$
 we get for it   the equation (cf.    \cite[Section 1.2]{EGK})
\be\label{k1}
\dot \psi=i \big( \Lambda^{\#}\psi+ \eps \frac1{\sqrt2}\Lambda^{-1/2} g\left(x,\Lambda^{-1/2} \left(\frac{\psi+\bar\psi}{\sqrt 2}\right)\right)\,.
\ee
Writing $\psi_a=(u_a+iv_a)/\sqrt2$ we see that eq.~\eqref{k1} is a 
Hamiltonian system with respect to the symplectic form $\sum dv_s\wedge du_s$ and the Hamiltonian $h=h_{\textrm up}+\eps P$, where
$$
 P= \int_{\T^{d_*}}  G\left(x,\Lambda^{-1/2} \left(\frac{\psi+\bar\psi}{\sqrt 2}\right)\right) \dd x\,,\qquad \p_u G(x,u)=g(x,u)\,,
$$
and $h_{\textrm up}$ is  the quadratic Hamiltonian 
$$
h_{\textrm up} ({u}, {v}) = \sum_{a\in\cA} \lambda_a |{\psi}_a |^2 
+\Big\langle 
\left(\begin{array}{c} {u}_\F \\ {v}_F \end{array}\right), H
\left(\begin{array}{c} {u}_F \\ {v}_F \end{array}\right)
\Big\rangle
+   \sum_{a\in\L_\infty}  \lambda_a |{\psi}_a |^2 \,.
$$
Here 
 ${u}_\F={}^t({u}_a, a\in\F)$ and 
 $H$ is a symmetric $2{{N}}\times2{{N}}\,$-matrix.
 The $2{{N}}$  eigenvalues  of the Hamiltonian operator with 
 the matrix $H$ are the real  numbers $\{\pm\la, a\in\F\}$.
  So the linear system \eqref{beamm}${}\mid_{\eps=0}$  is stable if and only if $n=0$.

Let us fix any $n$ vector $I=\{I_a>0,a\in\cA\}$ with positive components. 
The $n$-dimensional torus 
\ben \left\{\begin{array}{ll}
     |\psi_a|^2 =I_a,\quad &a\in \cA\\
\psi_a=0,\quad & a\in \L=\Z^{d_*}\setminus \cA,
\end{array}\right.
\een
is invariant for the unperturbed linear equation; it is linearly stable if and only if ${{N}}=0$. 
In the linear
space span$\{\psi_a, a\in\cA\}$ we  introduce the action-angle variables $(r_a,\theta_a)$ through  the relations 
$
\psi_a=\sqrt{(I_a+r_a)}e^{i\theta_a}$, $  a\in\cA. $
The  unperturbed  Hamiltonian  becomes
$$
h_{\textrm up}= \text{const} +\langle r,\om(\r)\rangle
+\Big\langle 
\left(\begin{array}{c} {u}_\F \\ {v}_\F \end{array}\right), H
\left(\begin{array}{c} {u}_\F \\ {v}_\F \end{array}\right)
\Big\rangle
 +
 \sum_{a\in\L_\infty}\lambda_a |\psi_a|^2\,,
$$
with  $ \om(\r)=( \omega_a=\lambda_a, \,{a\in\cA} )$,  and the perturbation  becomes 
$$
P=\eps \int_{\T^{d_*}}G
\left(x, \hat u(r,\theta;\zeta)(x)
\right)\dd x, \quad \hat u(r,\theta;\zeta)(x) = \Lambda^{-1/2}\Big(\frac{\psi+\bar\psi}{\sqrt2}\Big),
$$
 i.e. 
$$
\hat u
=\sum_{a\in\cA}   \frac 
{ \sqrt{(I_a+r_a)} \,(e^{i\theta_a}\phi_a +  e^{-i\theta_a}\phi_{-a}   )} 
 {\sqrt{ 2\la}}
 +  \sum_{a\in\L}\frac{\psi_a\phi_a +\bar\psi_a\phi_{-a}}{\sqrt{ 2\la}}.
$$
In the  symplectic coordinates $((u_a, v_a), a\in \L)$ the Hamiltonian $h_{\textrm up}$ has
 the form \eqref{equation1.1}, 
  and we wish to apply to the  Hamiltonian $h=h_{\textrm up}+\eps P$
  Theorem \ref{main} and Corollary \ref{cMain}. The assumption A1
  with constants $c,c'$ of order one, $\beta_1=\beta_2=\beta_3=2$ holds trivially. The
  assumption A2 also holds since for each case (i)-(iii) the second alternative 
  with $\omega(\rho)=\rho$ 
  is fulfilled for some  $\delta_0\sim1$. Finally, the assumptions R1 and R2 with $\varkappa=1$ and suitable 
  constants $\gamma_1, \sigma, \mu>0$ and $\gamma_2=m_*$ 
    are valid in view of Lemma~3.2 in \cite{EGK}. 
    More exactly, the validity of the assumption R1 is a part of the
    lemma's assertion. The lemma also states that the second differential  $Jd^2 f$ defines holomorphic mappings
    $$
    Jd^2f:   \O_{\ga'}(\s,\mu) \to M_{\ga'}^D\,,\qquad\ga'\le\ga\,,
    $$
    where $ M_{\ga}^D$ is the space of matrices $A$, formed by $2\times 2$-blocs $A_a^b$, such that
    $$
    |A|_\ga^D := \sup_{a,b} \langle a\rangle \langle b\rangle |A|_a^b \max([a-b], 1)^{\ga_2}e^{\ga_1 [a-b]}<\infty\,. 
    $$
    It is easy to see that $M_{\ga}^D\subset  \mathcal M^b_{\ga, \varkappa}$ if $\varkappa=1$ and $m_*$, entering the definition of 
    $ \mathcal M^b_{\ga, \varkappa}$,     is sufficiently big.
    So R2 also holds.

\medskip

 Let us set $ u_0(\theta,x) = \hat u(0,\theta;0)(x) $.
 Then  for every $I\in\R_+^n$ and $\theta_0\in\T^{d_*}$ 
 the function $(t,x)\mapsto u_0(\theta_0+t\om,x)$ is a solution of \eqref{beamm} with
 $\eps=0$. Application of 
  Theorem \ref{main} and Corollary \ref{cMain} gives us the following result:
 
\begin{theorem}\label{t72}
For $\eps$ sufficiently small there is a Borel subset
${\D_\eps}\subset \D$, 
$\, \meas(\D\setminus{\D_\eps})\leq C\eps^\alpha$, $\alpha>0$,  
such that for  $\rho\in{\D_\eps}$ there is a function 
$ u_1(\theta,x)$, analytic in $\theta\in\T^n_{\frac\s 2}$ and $H^{d^*}$-smooth in $x\in\T^{d_*}$, satisfying 
$$\sup_{|\Im\theta|<\frac\s 2}\|u_1(\theta,\cdot)-u_0(\theta,\cdot)\|_{H^{d^*}(\T^{d_*})}
\leq \beta\eps,$$
and there is a mapping  $\om':{\D_\eps}\to \R^n$,
$\ \|\om'-\om\|_{C^1({\D_\eps})}\leq \beta\eps,$
such that for  $\r\in {\D_\eps}$ the function 
$\ 
u(t,x)=u_1(\theta+t\om'(\r),x)
$
is a   solution of the beam equation \eqref{beamm}.
Equation \eqref{k1}, linearised around its solution $\psi(t)$, corresponding to
the solution $u(t,x)$ above, has exactly ${{N}}$ unstable and ${{N}}$ stable directions.
 \end{theorem}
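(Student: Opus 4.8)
The plan is to apply Theorem \ref{main} together with Corollary \ref{cMain} to the Hamiltonian $h=h_{\textrm up}+\eps P$, written in the symplectic coordinates $((u_a,v_a),\,a\in\L)$ and in the action--angle coordinates $(r_a,\theta_a)$, $a\in\cA$, introduced above, and then to translate the abstract output back into a statement about \eqref{beamm}. One takes $\de=0$, so that $h_{\textrm up}$ itself is the normal form Hamiltonian ($h_{\textrm up}\in\NF_{\vark}(\Delta,0)$ with, say, $\Delta=1$) and $\eps P$ is the perturbation. The hypotheses are exactly those verified just above: A1 holds with $\beta_1=\beta_2=\beta_3=2$ and $c,c'\sim1$ because $\la=\sqrt{|a|^4+\hat V_a}$ and $(\hat V_a)$ is bounded; A2 holds with $\de_0\sim1$, the second alternative of the dichotomy being realised for $\om(\rho)=\rho$ after the change of parameter $\rho\mapsto\om(\rho)$ (a diffeomorphism, so Remark \ref{remro} applies), by inspecting the directional derivatives of the operators $L(\rho)$ in A2$(i)$--$(iii)$; and R1--R2 hold with $\vark=1$ and $\ga=(\ga_1,m_*)$, $m_*$ large, by Lemma~3.2 of \cite{EGK} and the elementary inclusion $M^D_\ga\subset\cM^b_{\ga,1}$. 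Since here $\chi,c',\de_0,\sup_\D|\om|$ and the geometric data $\ga_1,\s,\mu,d_\Delta$ are all of order one while $\xi=\ab{\eps P}=O(\eps)$, the smallness requirement \eqref{epsi} reduces to ``$\eps$ sufficiently small'', the largest admissible size $\eps_*$ in Corollary \ref{cMain} is of order one, and the measure bound of Theorem \ref{main} becomes $\Leb(\D\setminus\D')\le C(\log\tfrac1\eps)^{\exp}\eps^{\al}\le C\eps^{\al'}$; set $\D_\eps:=\D'$.

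Next I would extract the invariant torus. For $\rho\in\D_\eps$ the transformed Hamiltonian $h'+f'=(h_{\textrm up}+\eps P)\circ\Phi$ satisfies $d_rf'=d_\theta f'=d_\zeta f'=d^2_\zeta f'=0$ on $\{r=0,\ \zeta=0\}$ by Theorem \ref{main}(i); hence the Hamiltonian vector field of $h'+f'$ is tangent to $\T^n\times\{0\}$ there, and its restriction is the linear flow with frequency $\om'(\rho)=d_rh'(0,\cdot,\rho)$, with $\|\om'-\om\|_{C^1(\D_\eps)}\le\Cte\,\eps$ by Corollary \ref{cMain}$(ii)'$. Pushing forward by $\Phi(\cdot,\rho)$, the set $\Phi(\T^n\times\{0\},\rho)$ is an invariant torus of $h_{\textrm up}+\eps P$, i.e.\ of \eqref{k1}, carrying the flow $\theta\mapsto\theta+t\om'(\rho)$; unwinding the linear substitution $\psi\leftrightarrow(u,v)$ and $\hat u=\Lambda^{-1/2}\big((\psi+\bar\psi)/\sqrt2\big)$ turns the holomorphic embedding $\theta\mapsto\Phi((0,\theta,0),\rho)$, defined for $|\Im\theta|<\s/2$, into the profile $u_1(\theta,x)$, analytic on $\T^n_{\s/2}$ and $H^{d^*}$ in $x$ (one uses that $Y_{\ga_*}$ with $\ga_*=(0,m_*)$ is the $H^{m_*}$-scale on the Fourier side and chooses $m_*\ge d^*$). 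The estimate $\sup_{|\Im\theta|<\s/2}\|u_1(\theta,\cdot)-u_0(\theta,\cdot)\|_{H^{d^*}}\le\Cte\,\eps$ then follows from $\|\Phi(x,\cdot)-x\|_{\ga_*}\le(C/\eps_*)\,\ab{(\eps P)^T}\le\Cte\,\eps$ in Corollary \ref{cMain}$(iii)'$, and $u(t,x)=u_1(\theta_0+t\om'(\rho),x)$ solves \eqref{beamm}.

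For the count of stable and unstable directions I would linearise \eqref{k1} along the solution $\psi(t)$. This linearisation is conjugate, via $d\Phi$ restricted to the torus (which by Corollary \ref{cMain}$(iii)'$ differs from the identity by $O(\eps)$ in $\cM^b_{\ga_*,\vark}$, hence is bounded and invertible), to the linearisation of $h'+f'$ at $\T^n\times\{0\}$, which by the vanishing of the jet of $f'$ equals the $\theta$-independent, reducible system of $h'$. By Theorem \ref{main}(iii) the latter splits into the finite $\F$-block $\tfrac12\langle\zeta_\F,H'(\rho)\zeta_\F\rangle$ and the infinite block-diagonal elliptic part $\tfrac12\sum_{[a]}\big(\langle p_{[a]},A'_{[a]}p_{[a]}\rangle+\langle q_{[a]},A'_{[a]}q_{[a]}\rangle\big)$ with $A'_{[a]}$ symmetric and, being an $O(\de'+\eps)$-perturbation of $\la I\ge c'I$, positive definite; the elliptic part contributes only purely imaginary eigenvalues, while $JH'(\rho)$, an $O(\eps)$-perturbation of $JH$ whose spectrum $\{\pm\la:a\in\F\}$ is real and off the imaginary axis, retains exactly ${{N}}$ eigenvalues with positive real part and ${{N}}$ with negative real part. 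Hence the linearised equation has exactly ${{N}}$ unstable and ${{N}}$ stable directions, which finishes the proof.

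The main obstacle is not the KAM iteration, which is supplied directly by Theorem \ref{main}, but the two translation steps: first, checking A2 with $\de_0\sim1$ for $\om(\rho)=\rho$ and for \emph{all} its $C^1$-perturbations, since the dichotomy must be verified for every operator $L(\rho)$ occurring in A2$(i)$--$(iii)$ (and, via the cited Lemma~3.2, checking the matrix-algebra inclusion $M^D_\ga\subset\cM^b_{\ga,\vark}$); and second, identifying the abstract phase space $Y_{\ga_*}$ and the abstract symplectomorphism $\Phi$ with a genuine Sobolev space $H^{d^*}(\T^{d_*})$ and with an honest change of unknown for the PDE, so that the bounds on $\Phi-\mathrm{id}$ and on $\om'-\om$ and the spectral dichotomy for the linearisation transfer legitimately to the stated conclusions.
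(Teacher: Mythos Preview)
Your proposal is correct and follows essentially the same route as the paper: verify A1--A2 and R1--R2 for $h_{\textrm up}+\eps P$ exactly as the paper does (including the inclusion $M^D_\ga\subset\cM^b_{\ga,\vark}$ via Lemma~3.2 of \cite{EGK}), apply Theorem~\ref{main} with Corollary~\ref{cMain}, and read off the stable/unstable count from item~(iii) by observing that the $\F$-block is an $O(\eps)$-perturbation of $JH$ while the infinite block is elliptic. Your write-up is in fact more explicit than the paper's, which simply asserts that ``Application of Theorem~\ref{main} and Corollary~\ref{cMain} gives us the following result'' and then, for the last assertion, invokes item~(iii) in two sentences.
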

 
 The last assertion of this theorem follows from the item (iii) of Theorem \ref{main} which 
 implies that the linearised equation, in the directions, corresponding to $\L$, reduces 
 to a linear equation with a  coefficient matrix which can be written as $B=B_\F \oplus B_\infty$. 
 The operator $B_\F $ is close to the Hamiltonian operator with the matrix $H$, so it has ${{N}}$ stable
 and ${{N}}$ unstable directions, while the matrix $B_\infty$ is skew-symmetric, so it has imaginary
 spectrum.

\begin{remark} This result was  proved by Geng and You  \cite{GY06a}
 for  the case when the perturbation 
 $g$ does not depend on $x$ and the unperturbed linear equation is stable.
\end{remark}

\subsection{NLS equation with a smoothing nonlinearity
}\label{s4.2}
Consider the NLS equation with the Hamiltonian 
$$
g(u)=\tfrac12\int|\nabla u|^2\,dx+\frac{m}2\int|u(x)|^2\,dx 
+\eps\int f(t,\De^{-\alpha}u(x),x)\,dx,
$$
where $m\ge0, \ \alpha>0, 
\ u(x)$ is a complex function on the torus $\T^{d_*}$ and $f$ is a real-analytic 
function on $\R\times  \R^2\times\T^{d_*}$
 (here we regard $\C$ as $\R^2$).  The corresponding 
Hamiltonian equation is 
\begin{equation}\label{-2.1}
\dot u=i \big(-\Delta+mu+\eps \De^{-\alpha}\nabla_2 f(t,\De^{-\alpha}u(x),x)\big)\,,
\end{equation}
where $\nabla_2$ is the gradient with respect to the second variable, $u\in\R^2$. 
 We have to introduce in this equation a vector-parameter $\rho\in\R^n$.
 To do this we can either assume that $f$ is time-independent and 
  add a convolution-potential term $V(x,\rho)*u$ (cf. \eqref{beamm}), 
 or assume that $f$ is a quasiperiodic function of time, $f=F(\rho t,u(x),x)$, where $\rho\in\D\Subset\R^n$.
 Cf.  \cite{BB}. 

Let us discuss the second option. In this case the 
 non-autonomous equation \eqref{-2.1} can be written as an autonomous system on the 
 extended phase-space $\O\times\T^n\times L_2=\{(r,\theta,u(\cdot))\}$, where 
 $ L_2=L_2(\T^{d_*};\R^2)$ and  $\O$ is a ball in $\R^n$, with the Hamiltonian 
 \begin{equation*}
 \begin{split}
&g(r,u,\rho)=h_{\textrm up}(r,u,\rho)+\eps\int F(\theta,\De^{-\alpha}u(x),x)\,dx,\\
&h_{\textrm up}(r,u, \rho)=\langle \rho, r\rangle +
\tfrac12\int|\nabla u|^2\,dx+\frac{m}2\int|u(x)|^2\,dx. 
\end{split}
\end{equation*}
Assume that $m>0$\footnote{\ if undesirable, the term $imu$  can be removed from eq.~\eqref{-2.1}
by means of the substitution $u(t,x)=u'(t,x)e^{imt}$.}
 and take for $A_{\textrm up}$ the operator $-\Delta+m$ with the eigenvalues 
$\lambda_a=|a|^2+m$. Then the Hamiltonian $g(r,u,\rho)$ has the form, required by  Theorem \ref{main}
 with
 $$
 \L=\Z^{d_*},\quad \F=\emptyset, \quad \varkappa=\min(2\alpha,1), \quad  
  \beta_1=2, \quad \beta_2=0, \quad \beta_3=2  $$
 (any $\beta_3$ will do here in fact)
 and suitable $\sigma, \mu, \gamma_1>0$ and $\ga_2=m_*$. The 
   theorem applies and implies  that,  for a typical $\rho$,  equation \eqref{-2.1}
   has time-quasiperiodic solutions of order $\eps$. The equation, linearised 
about these solutions, reduces to constant coefficients and all its Lyapunov exponents are zero.

If $\alpha=0$, equations  \eqref{-2.1}   become significantly more complicated. Still the 
assertions above remain true since they follow from the KAM-theorem  in \cite{EK10}. 
Cf.  \cite{EK09}, where is considered nonautonomous linear Schr\"odinger equation, which is
equation \eqref{-2.1} with the perturbation $\eps (-\Delta)^{-\alpha}\nabla_2f$ replaced by $\eps V(\rho t,x)u$,
and it is proved that this equation reduces to an autonomous equation by means of a time-quasiperiodic linear 
change  of  variable $u$. In \cite{BB}  equation \eqref{-2.1} with $\alpha=0$ and $f=F(\rho t,\De^{-\alpha}u(x),x)$
 is considered for the case 
when the constant-potential term $mu$ is replaced by  $V(x)u$ with arbitrary sufficiently smooth 
potential 
$V(x)$. It is proved that for a typical $\rho$ the equation has small time-quasiperiodic solutions, but not that the linearised equations are reducible to constant coefficients.

\appendix

\section{}

\subsubsection{Transversality}\label{ssTransversality}
\

Let $\D$ be the unit ball in $\R^p$. For any matrix-valued function
$$f:\D\to gl(\dim,\C),$$
let
$$\Sigma(f,\eps)=\{\r\in\D: \aa{f(\r)^{-1}}>\frac1{\eps}\},$$
where $\aa{\ \ }$ is the operator norm.
 
\begin{lemma}\label{lTransv1} Let $f:\D\to\C$ be a $\cC^{{s_*}}$-function which is  $({\mathfrak z},j,\de_0)$-transverse,
$1\le j\le {s_*}$.

Then, 
$$
\Leb \{\r\in\D: \ab{f(\r)}<\eps\}\le C \frac{|\nabla_\r f |_{\cC^{{{s_*}}-1}(\D)}}{\de_0}(\frac\eps{\de_0})^{\frac1j}.$$

$C$ is a constant that only depends on ${s_*}$ and $p$.
\end{lemma}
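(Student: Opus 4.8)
The plan is to reduce to a one-dimensional sublevel-set estimate by slicing $\D$ along the transversality direction, to prove that one-dimensional estimate for real-valued functions by an induction on $j$, and then to pass to complex-valued $f$ at the expense of the factor $|\nabla_\rho f|_{\cC^{s_*-1}(\D)}/\delta_0$. Throughout I use that $({\mathfrak z},j,\delta_0)$-transversality means: there is a unit vector ${\mathfrak z}\in\R^p$ with $|\partial_{\mathfrak z}^j f(\rho)|\ge\delta_0$ for all $\rho\in\D$.

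First I would normalise. By an orthogonal change of coordinates on $\R^p$ (which fixes $\D$ and alters $|\nabla_\rho f|_{\cC^{s_*-1}(\D)}$ only by a factor depending on $p$) we may take ${\mathfrak z}=e_1$, so $|\partial_1^j f(\rho)|\ge\delta_0$ on $\D$. Writing $\rho=(t,y)$, $t\in\R$, $y\in\R^{p-1}$, Fubini gives $\Leb\{\rho\in\D:|f(\rho)|<\eps\}=\int_Y\Leb\{t\in I_y:|g_y(t)|<\eps\}\,dy$, where $Y$ is the orthogonal projection of $\D$ (of $(p-1)$-volume bounded by a constant depending only on $p$), $I_y=\{t:(t,y)\in\D\}$ is an interval of length $\le\diam\D\le2$, and $g_y(t)=f(t,y)$ satisfies $|g_y^{(j)}(t)|\ge\delta_0$ on $I_y$. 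So it suffices to bound $\Leb\{t\in I:|g(t)|<\eps\}$ for a one-variable $g$ with $|g^{(j)}|\ge\delta_0$ on an interval $I$ of length $\le2$.

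Next I would establish the real-valued one-dimensional lemma: if $\phi:I\to\R$ is $\cC^j$ with $|\phi^{(j)}|\ge d>0$ on $I$, then $\Leb\{t\in I:|\phi(t)|<\eps\}\le c_j(\eps/d)^{1/j}$ with $c_j$ depending only on $j$. The proof is by induction on $j$. For $j=1$, $\phi$ is strictly monotone, so $\{|\phi|<\eps\}$ is an interval on which $\phi$ varies by less than $2\eps$ at speed $\ge d$, hence of length $<2\eps/d$. For $j\ge2$: a real function whose $k$-th derivative is nowhere zero has at most $k$ zeros (iterated Rolle), so $\phi''$ has at most $j-2$ zeros, and consequently for each $\lambda>0$ the set $\{|\phi'|>\lambda\}$ has at most $2(j-1)$ connected components; on each of them $\phi$ is monotone with $|\phi'|\ge\lambda$, contributing at most $2\eps/\lambda$, while on $\{|\phi'|\le\lambda\}$ the inductive hypothesis applied to $\phi'$ gives measure $\le c_{j-1}(\lambda/d)^{1/(j-1)}$. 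Taking $\lambda=d(\eps/d)^{(j-1)/j}$, so that both contributions equal $(\eps/d)^{1/j}$ up to the constants $4(j-1)$ and $c_{j-1}$, closes the induction.

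The remaining — and hardest — step is the complex-valued case, which is exactly where the factor $|\nabla_\rho f|_{\cC^{s_*-1}(\D)}/\delta_0$ is forced: the real estimate genuinely fails for complex $g$ with only $|g^{(j)}|\ge\delta_0$, since $g^{(j)}$ may rotate arbitrarily fast (e.g. $g(t)=d\,(iN)^{-j}e^{iNt}$ has $|g^{(j)}|\equiv d$ yet $|g(t)|\equiv d/N^j$ on all of $I$). I would partition $I$ into consecutive subintervals $I_1,\dots,I_M$ on each of which $\arg g^{(j)}$ oscillates by at most $\pi/4$. Since $\bigl|\tfrac{d}{dt}\arg g^{(j)}\bigr|=\bigl|\mathrm{Im}(g^{(j+1)}/g^{(j)})\bigr|\le|g^{(j+1)}|/\delta_0\le C\,|\nabla_\rho f|_{\cC^{s_*-1}(\D)}/\delta_0$ (here one uses $j\le s_*-1$, so $g^{(j+1)}$ exists and is controlled by the $\cC^{s_*-1}$-norm of $\nabla_\rho f$) and $|I|\le2$, one may take $M\le C\,|\nabla_\rho f|_{\cC^{s_*-1}(\D)}/\delta_0$, absorbing the additive $1$ via $|\nabla_\rho f|_{\cC^{s_*-1}(\D)}\ge\sup|\partial_1^j f|\ge\delta_0$. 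On $I_m$ pick $\theta_m$ in the range of $\arg g^{(j)}$; then $h_m:=\mathrm{Re}(e^{-i\theta_m}g)$ is real, $\cC^j$, with $h_m^{(j)}=|g^{(j)}|\cos(\arg g^{(j)}-\theta_m)\ge\delta_0/\sqrt2$ on $I_m$, while $|h_m|\le|g|$, so the real estimate gives $\Leb\{t\in I_m:|g(t)|<\eps\}\le c_j(\sqrt2\,\eps/\delta_0)^{1/j}$. Summing over the $M$ pieces and substituting into the Fubini identity yields $\Leb\{\rho\in\D:|f(\rho)|<\eps\}\le C\,|\nabla_\rho f|_{\cC^{s_*-1}(\D)}\,\delta_0^{-1}(\eps/\delta_0)^{1/j}$, with the constant depending only on $j\le s_*$ and $p$. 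Finally, the borderline case $j=s_*$ (where $g^{(j+1)}$ is unavailable) I would handle by noting that in the applications to Assumption A2 the relevant functions are determinants of real operators, hence real-valued, so the third step of the argument applies directly and this case is covered by the real-valued estimate alone.
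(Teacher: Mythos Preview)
Your argument is correct. The reduction to a one-variable problem along $\mathfrak z$ via Fubini, followed by the classical inductive sublevel-set estimate (your Step~2), is exactly the route the paper takes---except that the paper simply cites the one-dimensional bound from \cite{E02} rather than proving it.

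Your Step~3 goes beyond the paper: the paper's proof does not address the passage from real to complex $f$, and your counterexample $g(t)=d(iN)^{-j}e^{iNt}$ shows that this passage is genuinely non-trivial. Your reduction via arcs of bounded argument-variation is clean and correctly identifies the prefactor $|\nabla_\rho f|_{\cC^{s_*-1}(\D)}/\delta_0$ as the cost of controlling the rotation of $g^{(j)}$. The gap you flag at $j=s_*$ is real for the lemma as literally stated with $f:\D\to\C$, but---as you correctly observe---the paper only invokes the lemma for $\det L(\rho)$ with $L$ built from $\langle k,\omega\rangle I$ and $\pm iJH$; since $JH$ is a real Hamiltonian matrix its spectrum is conjugation-symmetric, so these determinants are real and your Step~2 alone already covers the applications (indeed with the stronger bound, without the prefactor).
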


\begin{proof} It is enough to prove this for ${\mathfrak z}=(1,0,\dots,0)$, i.e. for a scalar $\r$. It is a well-known result, see for example Lemma B.1 in \cite{E02}, that
$$
\Leb(\Sigma(f,\eps))\le C \frac{|f |_{\cC^{{{s_*}}}(\D)}}{\de_0}(\frac\eps{\de_0})^{\frac1j}.$$
This implies the claim.

\end{proof}

\subsubsection{Extension}\label{ssExtension}
\

\begin{lemma}\label{lExtension} Let $X\subset Y$ be subsets of $\D_0$ such that
$$\dist(\D_0\setminus Y,X)\ge \eps,$$
then there exists a $\cC^\infty$-function $g:\D_0\to\R$, being $=1$ on $X$ and $=0$ outside
$Y$ and such that for all $j\ge 0$
$$| g |_{\cC^j(\D_0)}\le C(\frac C{\eps})^j.$$
$C$ is an absolute constant.

\end{lemma}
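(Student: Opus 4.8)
The plan is to build $g$ as a mollification of the indicator function of a set wedged between $X$ and $Y$. I would put $\eps_0=\eps/3$ and let $Z=\{z\in\R^p:\dist(\{z\},X)<\eps_0\}$ be the open $\eps_0$-neighbourhood of $X$ in $\R^p$. The only geometry used is the following: if $x\in X$ then the ball $B_{\eps_0}(x)$ is contained in $Z$; and if $x\in\D_0$ satisfies $\dist(\{x\},X)\le 2\eps_0$, then $x\in Y$, because $2\eps_0<\eps$ while $\D_0\setminus Y$ lies at distance $\ge\eps$ from $X$ by hypothesis.

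Next I would fix, once and for all, a mollifier $\phi\in\cC^\infty(\R^p)$ with $\phi\ge 0$, $\operatorname{supp}\phi\subset B_1(0)$ and $\int_{\R^p}\phi\,\dd y=1$, and set $\phi_{\eps_0}(y)=\eps_0^{-p}\phi(y/\eps_0)$. Then I would define
$$g(x)=\big(\mathbf{1}_Z*\phi_{\eps_0}\big)(x)=\int_{\R^p}\mathbf{1}_Z(y)\,\phi_{\eps_0}(x-y)\,\dd y,$$
regarded as a function on $\D_0$. One checks immediately that $g\in\cC^\infty(\R^p)$ and $0\le g\le1$; that $g\equiv 1$ on $X$ (there the integrand is supported in $B_{\eps_0}(x)\subset Z$, so $g(x)=\int\phi_{\eps_0}=1$); and that $g\equiv 0$ on $\D_0\setminus Y$ (there $\dist(\{x\},X)\ge\eps>2\eps_0$, hence $B_{\eps_0}(x)\cap Z=\emptyset$ and $g(x)=0$). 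Thus $g$ satisfies the stated boundary conditions.

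For the derivative bounds I would differentiate under the convolution, $D^\alpha g=\mathbf{1}_Z*D^\alpha\phi_{\eps_0}$, and combine Young's inequality with the scaling identity $\|D^\alpha\phi_{\eps_0}\|_{L^1}=\eps_0^{-|\alpha|}\|D^\alpha\phi\|_{L^1}$ to get
$$\sup_{\D_0}|D^\alpha g|\ \le\ \|\mathbf{1}_Z\|_{L^\infty}\,\|D^\alpha\phi_{\eps_0}\|_{L^1}\ =\ \Big(\tfrac{3}{\eps}\Big)^{|\alpha|}\|D^\alpha\phi\|_{L^1(\R^p)}.$$
Maximising over $|\alpha|\le j$ yields $|g|_{\cC^j(\D_0)}\le (3/\eps)^j M_j$ with $M_j=\max_{|\alpha|\le j}\|D^\alpha\phi\|_{L^1}$, which has the announced shape $C(C/\eps)^j$ once $C$ is chosen to dominate $3$ and $M_j^{1/j}$.

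The one delicate point — really the only thing to be careful about in the write-up — is the claim that $C$ is \emph{absolute}, i.e. independent of $j$. A compactly supported $\phi$ cannot have $\|D^\alpha\phi\|_{L^1}$ bounded geometrically in $|\alpha|$: that would force $\hat\phi$ to have compact support, hence $\phi$ to be real-analytic and therefore $\equiv0$. So $M_j$ genuinely grows with $j$. This is harmless for the way the lemma is used in the paper, where $j$ always ranges over a fixed bounded set (in practice $j\le s_*$); accordingly I would record the estimate with $C=C(j,p)$ and remark that absorbing $\max_{j\le s_*}M_j^{1/j}$ into $C$ recovers the stated absolute constant in the relevant regime.
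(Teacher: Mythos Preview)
Your proof is correct and follows the same mollification idea as the paper's one-line sketch. Two remarks: first, the paper says to convolve ``the characteristic function of $X$'' with a mollifier of radius $\le\eps/2$, which taken literally would not give $g=1$ on all of $X$; your introduction of the intermediate neighborhood $Z$ is the right fix and is surely what was intended. Second, your observation about the constant is well taken: a genuinely $j$-independent $C$ in $|g|_{\cC^j}\le C(C/\eps)^j$ would force the mollifier to be band-limited and hence zero, so the statement as written is only correct with $C=C(j)$ (or, equivalently, with $C$ absolute once $j\le s_*$ is fixed, which is how the lemma is actually used).
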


\begin{proof}
This is  a classical result obtained by convoluting the characteristic function of $X$
with a $\cC^\infty$-approximation of the Dirac-delta supported in a ball of radius $\le  \frac{\eps}2$.
\end{proof}

\end{document}